\numberwithin{equation}{section}            
\theoremstyle{plain}
\newtheorem{thm}{Theorem}[section]
\newtheorem{prop}[thm]{Proposition}
\newtheorem{defi}[thm]{Definition}
\newtheorem{lem}[thm]{Lemma}
\newtheorem{cor}[thm]{Corollary}
\newtheorem{eg}[thm]{{Example}}
\theoremstyle{remark}
\newtheorem{rema}[thm]{Remark}
\newcommand{\ad}{{\mbox{\upshape{ad}}}}
\newcommand{\barM}{\overline{\phantom{m}}^\cM}
\newcommand{\bc}{{\mathbf{c}}}
\newcommand{\bs}{{\bf{s}}}
\newcommand{\C}{{\mathbb C}}
\newcommand{\N}{{\mathbb N}}
\newcommand{\cA}{{\mathcal A}}
\newcommand{\cB}{{\mathcal B}}
\newcommand{\cC}{{\mathcal C}}
\newcommand{\cF}{{\mathcal F}}
\newcommand{\cH}{{\mathcal H}}
\newcommand{\cL}{{\mathcal L}}
\newcommand{\cM}{{\mathcal M}}
\newcommand{\cP}{{\mathcal P}}
\newcommand{\cR}{{\mathcal R}}
\newcommand{\cS}{{\mathcal S}}
\newcommand{\cZ}{{\mathcal Z}}
\newcommand{\Du}{u}
\newcommand{\Dc}{C}
\newcommand{\End}{\mathrm{End}}
\newcommand{\Etil}{\widetilde{E}}
\newcommand{\field}{{\mathbb K}}
\newcommand{\gfrak}{{\mathfrak g}}
\newcommand{\gr}{{\mathrm{gr}}}
\newcommand{\Hom}{{\mathrm{Hom}}}
\newcommand{\id}{{\mathrm{id}}}
\newcommand{\kow}{{\varDelta}}
\newcommand{\ot}{\otimes}
\newcommand{\stirling}[2]{\genfrac{[}{]}{0pt}{}{#1}{#2}}
\newcommand{\Uq}{U}
\newcommand{\uqg}{{U_q(\mathfrak{g})}}
\newcommand{\Upoly}{U^{\mathrm{poly}}}
\newcommand{\Z}{{\mathbb Z}}
\begin{document}
\title[]
{Defining relations of quantum symmetric pair coideal subalgebras}
\author[Stefan Kolb]{Stefan Kolb}
\address{School of Mathematics, Statistics and Physics,
Newcastle University, Newcastle upon Tyne NE1 7RU, United Kingdom}
\email{stefan.kolb@newcastle.ac.uk}
\author[Milen Yakimov]{Milen Yakimov}
\address{
Department of Mathematics, Northeastern University, Boston, MA 02115, U.S.A.
}
\email{m.yakimov@northeastern.edu}
\thanks{The research of M.Y. was supported by NSF grant DMS-1901830 and Bulgarian Science Fund grant DN02/05.}

\keywords{Quantum symmetric pairs, star products on $\N$-graded noncomutative algebras, deformed Chebyshev poslynomials of the second kind, bivariate continuous $q$-Hermite polynomials}

\subjclass[2010]{Primary: 17B37, Secondary: 53C35, 16T05, 17B67}
\begin{abstract}
We explicitly determine the defining relations of all 
quantum symmetric pair coideal subalgebras of quantized enveloping algebras of Kac--Moody type.
Our methods are based on star products on noncommutative $\N$-graded algebras. The resulting defining relations are expressed in terms of continuous $q$-Hermite polynomials and a new family of deformed Chebyshev polynomials.
\end{abstract}
\maketitle

\section{Introduction}
\subsection{Background} Let $\gfrak$ be a symmetrizable Kac-Moody algebra and $\theta:\gfrak\rightarrow \gfrak$ an involutive Lie algebra automorphism. In the theory of quantum symmetric pairs one considers quantum group analogs of the fixed Lie subalgebra $\gfrak^\theta=\{x\in \gfrak\,|\,\theta(x)=x\}$. The main objects of investigation are certain subalgebras $\cB_\bc$ of the quantized enveloping algebra $\uqg$, which specialize to $U(\gfrak^\theta)$ in a suitable limit $q\rightarrow 1$. The subalgebras $\cB_\bc$ satisfy the (right) coideal property $\kow(\cB_\bc)\subset \cB_\bc\ot \uqg$ where $\kow$ denotes the coproduct of $\uqg$, and we refer to them as quantum symmetric pair coideal subalgebras.

For $\gfrak$ of finite type a comprehensive theory of quantum symmetric pairs was developed by G.~Letzter in \cite{a-Letzter99a}. This theory was extended to the Kac-Moody case in \cite{a-Kolb14} for involutive automorphisms of the second kind of $\gfrak$. Such involutions are determined by Satake diagrams $(X,\tau)$ where $X$ is a subset of the nodes $I$ of the underlying Dynkin diagram, and $\tau:I\rightarrow I$ is an involutive diagram automorphism. The subset $X\subset I$ has to be of finite type and the pair $(X,\tau)$ has to satisfy the compatibility conditions given in \cite[Definition 2.3]{a-Kolb14}. It was observed by V.~Regelskis and B.~Vlaar in \cite{a-RV20} that these conditions can be slightly weakened and that the theory extends to a setting of generalized Satake diagrams $(X,\tau)$, see Section \ref{sec:partial-parabolic}.

The theory of quantum symmetric pairs has been evolving rapidly over the past decade. In \cite{a-BaoWang18a} H.~Bao and W.~Wang initiated a program to extend Lusztig's theory of canonical bases to the setting of quantum symmetric pairs. This program fed into a general construction of a universal $K$-matrix in \cite{a-BalaKolb19} which is the quantum symmetric pair analog of the universal $R$-matrix for quantum groups. A centerpiece in both constructions was a notion of bar involution for quantum symmetric pairs which appeared independently in \cite{a-ES18} and \cite{a-BaoWang18a}. These developments led to a flurry of activity aiming to extend many quantum group related constructions to the setting of quantum symmetric pairs.
\subsection{The problem}\label{sec:problem}
One of the outstanding problems in the theory of quantum symmetric pairs is to give an explicit, conceptual and simple description of the algebra $\cB_\bc$ in terms of generators and relations for all generalized Satake diagrams $(X,\tau)$. In the present paper we solve this problem completely. To describe previous work and to formulate our results, we recall the definition of the quantum symmetric pair coideal subalgebra $\cB_\bc$ in terms of the pair $(X,\tau)$.

Let $U=U_q(\gfrak')$ be the quantized enveloping algebra of the derived Lie algebra $\gfrak'=[\gfrak,\gfrak]$ with standard generators $E_i, F_i, K_i^{\pm 1}$ for $i\in I$, defined over the field $\field=k(q)$ of rational functions in an indeterminate $q$. Let $\cM_X\subset U$ be the subalgebra generated by $E_i, F_i, K_i^{\pm 1}$ for $i\in X$ and let $U^0_\Theta$ be the subalgebra generated by $K_j, K_i K_{\tau(i)}^{-1}$ for $j\in X, i\in I\setminus X$. Let $W$ be the Weyl group of $\gfrak$ and let $w_X$ be the longest element in the parabolic subgroup of $W$ for the subset $X$. We write $T_{w_X}$ to denote the corresponding Lusztig automorphism. By construction $U^0_\Theta$ is the group algebra of the subgroup $Q^\Theta=\{\beta\in Q\,|\,\beta=-w_X\tau(\beta)\}$ of the root lattice $Q$. Let $K_\beta$ for $\beta\in Q^\Theta$ be the corresponding basis element of $U^0_\Theta$. For each $i\in I\setminus X$ define
\begin{align}\label{eq:Bi-def-intro}
  B_i=F_i- c_iT_{w_X}(E_{\tau(i)})K_i^{-1}
\end{align}
where $c_i\in \field^\ast$. For a good theory, we need to assume that the parameters $\bc=(c_i)_{i\in I\setminus X}$ belong to the set
\begin{align}\label{eq:C-def-intro}
  \cC=\{\bc=(c_i)_{i\in I\setminus X}\in (\field^\times)^{I\setminus X}\,|\, c_i=c_{\tau(i)} \mbox{ if $(\alpha_i,w_X(\alpha_{\tau(i)}))=0$}\}.
\end{align}  
By definition, the quantum symmetric pair coideal subalgebra $\cB_\bc$ for $\bc=(c_i)_{i\in I\setminus X}\in \cC$ is the subalgebra of $U$ generated by $\cM_X$, $U^0_\Theta$ and the elements $B_i$ given by \eqref{eq:Bi-def-intro}. For $j\in X$ we also write $B_j=F_j$.

The algebra $\cB_\bc$ has a filtration $\cF$ defined by a degree function given by
\begin{equation}
\begin{aligned}\label{eq:filt-def}
  \deg(B_i)&=1 \qquad \mbox{for $i\in I\setminus X$},\\
  \deg(h)&=0 \qquad \mbox{for $h\in \cM_X U^0_\Theta$.}
\end{aligned}
\end{equation}
For $i,j\in I$ let $S_{ij}(x,y)\in \field[x,y]$ be the quantum Serre polynomial given by \eqref{eq:qSerre}. Let $(a_{ij})_{i,j\in I}$ be the generalized Cartan matrix of $\gfrak$. By definition one has $S_{ij}(B_i,B_j)\in \cF_{\deg(i,j)}(\cB_\bc)$ where 
\begin{align*}
   \deg(i,j)=\begin{cases}
              2-a_{ij} & \mbox{if $i,j\in I\setminus X$,}\\
              1-a_{ij} & \mbox{if $i\in I\setminus X$, $j\in X$,}\\
              1 & \mbox{if $i\in X$, $j\in I\setminus X$,}\\
              0& \mbox{if $i,j\in X$.}\\
            \end{cases}
\end{align*}
A finer analysis implies that there exist elements $C_{ij}(\bc)\in \cF_{\deg(i,j)-1}(\cB_\bc)$ such that $S_{ij}(B_i,B_j)=C_{ij}(\bc)$ for all $i,j\in I$. Let $\cM_X^+$ denote the subalgebra of $U$ generated by all $E_j$ for $j\in X$. The following theorem was proved in \cite{MSRI-Letzter} for $\gfrak$ of finite type and was extended to the Kac-Moody case in \cite{a-Kolb14}.
\begin{thm}\label{thm:initial-ref}
\cite[Theorem 7.4]{MSRI-Letzter}, \cite[Theorem 7.1]{a-Kolb14}
  Let $\bc\in \cC$. The algebra $\cB_\bc$ is generated over $\cM_X^+ U^0_\Theta$ by the elements $B_i$ for $i\in I$ subject to the following relations
  \begin{align}
    K_\beta B_i &=q^{-(\beta, \alpha_i)} B_i K_\beta &&\mbox{for all $\beta\in Q^\Theta$, $i\in I$}\nonumber\\
    E_iB_j-B_jE_i&=\delta_{ij}\frac{K_i-K_i^{-1}}{q_i-q_i^{-1}} &&\mbox{for all $i\in X,j\in I$,}\nonumber\\
  S_{ij}(B_i,B_j)&=C_{ij}(\bc) && \mbox{for all $i,j\in I$, $i\neq j$.}\label{eq:S=C-intro}
  \end{align}
\end{thm}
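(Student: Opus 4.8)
The plan is to present the right-hand side as an abstract algebra, construct the evident surjection onto $\cB_\bc$, and establish injectivity by a filtered degeneration argument. Let $\cB'$ be the $\field$-algebra generated by the (already understood) subalgebra $\cM_X^+U^0_\Theta$ together with symbols $B_i$, $i\in I$, modulo the three families of relations in the statement. Since $\cB_\bc$ is by definition generated over $\cM_X^+U^0_\Theta$ by the $B_i$, the theorem amounts to showing that these relations hold in $\cB_\bc$ and that they are complete.

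First I would check the relations in $\cB_\bc$. The first is immediate, as $K_\beta$ acts on the weight space of $B_i$ (of weight $-\alpha_i$) by $q^{-(\beta,\alpha_i)}$. The second, for $i,j\in X$, is the defining relation of $U_q(\slfrak_2)$ with $B_j=F_j$; for $i\in X$, $j\in I\setminus X$ it reduces to $[E_i,F_j]=0$ together with the fact that $E_i$ commutes with $T_{w_X}(E_{\tau(j)})K_j^{-1}$, a standard property of the Lusztig automorphism restricted to $\cM_X$. The third relation is exactly the finer analysis recalled before the statement, namely $S_{ij}(B_i,B_j)=C_{ij}(\bc)$ with $C_{ij}(\bc)\in\cF_{\deg(i,j)-1}(\cB_\bc)$. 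These verifications yield a surjective homomorphism $\psi\colon\cB'\twoheadrightarrow\cB_\bc$ fixing $\cM_X^+U^0_\Theta$ and sending $B_i\mapsto B_i$.

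To prove that $\psi$ is injective, I would transport the filtration $\cF$ of \eqref{eq:filt-def} to $\cB'$ by setting $\deg(B_i)=1$ for $i\in I\setminus X$ and degree $0$ on $\cM_X^+U^0_\Theta$ and on $B_j=F_j$, $j\in X$. Then $\psi$ is filtered, and it suffices to show that $\gr\psi\colon\gr\cB'\to\gr\cB_\bc$ is injective. Passing to $\gr$ homogenizes the relations: since $C_{ij}(\bc)$ lies in strictly lower filtration degree, the inhomogeneous relation $S_{ij}(B_i,B_j)=C_{ij}(\bc)$ degenerates to the honest quantum Serre relation $S_{ij}(\overline{B}_i,\overline{B}_j)=0$, while the weight relations and the $E_i$-commutation relations are already filtration-homogeneous and survive unchanged. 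Hence $\gr\cB'$ is presented over $\cM_X^+U^0_\Theta$ by the $\overline{B}_i$ subject only to these homogeneous relations, so the subalgebra of $\gr\cB'$ generated by the $\overline{B}_i$, $i\in I$, is a quotient of the lower nilpotent algebra $U^-$ by Lusztig's presentation of $U^-$ by quantum Serre relations.

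The crux is the injectivity of $\gr\psi$, i.e.\ the completeness of the homogeneous relations, and this is where I expect the main obstacle to lie. The mechanism is the leading-term structure of the generators: in the triangular decomposition $U=U^-U^0U^+$ the leading term of $B_i$ is $F_i$, so ordered monomials in the $B_i$ have leading terms forming a PBW basis of $U^-$ and are therefore linearly independent. This shows the quotient map $U^-\to\langle\overline{B}_i\mid i\in I\rangle$ is also injective, hence an isomorphism, and combined with the triangular decomposition of $\cB_\bc$ as a free $\cM_X^+U^0_\Theta$-module, a graded-dimension count shows that the spanning set of ordered monomials in $\gr\cB'$ maps bijectively onto a basis of $\gr\cB_\bc$. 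Thus $\gr\psi$ is an isomorphism; a filtered map with injective associated graded is injective, so $\psi$ is an isomorphism and the listed relations are the defining relations of $\cB_\bc$ over $\cM_X^+U^0_\Theta$. The genuinely substantial input is the rigorous PBW basis and triangular decomposition of $\cB_\bc$ underlying this dimension count, which is the part one must import from the structural theory rather than derive from the relations themselves.
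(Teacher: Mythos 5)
Your proposal is essentially correct, but note that this paper never proves Theorem \ref{thm:initial-ref} itself: it is imported from \cite[Theorem 7.4]{MSRI-Letzter} and \cite[Theorem 7.1]{a-Kolb14}, and your argument reconstructs exactly the proof strategy of those references --- a surjection from the abstractly presented algebra, transport of the filtration \eqref{eq:filt-def}, degeneration of the inhomogeneous Serre relations in the associated graded, and the two structural inputs that must be imported, namely $\gr(\cB_\bc)\cong\cA$ (the isomorphism $\varphi$ of \eqref{eq:varphi}, resting on \cite[Proposition 6.2]{a-Kolb14}) and Lusztig's Serre presentation of $U^-$. Two points deserve tightening: first, $B_i$ is not a weight vector for $i\in I\setminus X$, so the relation $K_\beta B_i=q^{-(\beta,\alpha_i)}B_iK_\beta$ holds not because $B_i$ ``has weight $-\alpha_i$'' but because both summands of $B_i$ rescale under conjugation by $K_\beta$ by the same factor $q^{-(\beta,\alpha_i)}$, which uses $\Theta(\beta)=\beta$ in an essential way; second, $\gr\cB'$ need not be \emph{presented} by the homogenized relations (the associated graded ideal can a priori be strictly larger than the ideal generated by the leading terms of the defining relations) --- but your sandwich/dimension-count argument only uses the surjection from the homogeneously presented algebra onto $\gr\cB'$, so the proof survives with that claim weakened accordingly.
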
  
We call the relations \eqref{eq:S=C-intro} the quantum Serre relations for $\cB_\bc$. In order to obtain defining relations for $\cB_\bc$, it remains to determine these quantum Serre relations explicitly.
\subsection{Previous results}\label{sec:prev}
It follows from \cite[Lemma 5.11, Theorem 7.3]{a-Kolb14} that $C_{ij}(\bc)=0$ if $i\in X$ or $\tau(i)\notin\{i,j\}$. Hence it remains to determine the relations \eqref{eq:S=C-intro} explicitly in the following three cases:
\begin{enumerate}
  \item[(I)] $\tau(i)=i$ and $i,j\in I\setminus X$;
  \item[(II)]$\tau(i)=i$ and $i\in I\setminus X$, $j\in X$;
  \item[(III)] $\tau(i)=j$ and $i,j\in I\setminus X$. 
\end{enumerate}
For all Satake diagrams of finite type, the relations \eqref{eq:S=C-intro} were determined explicitly in \cite[Theorem 7.1]{a-Letzter03} by a subtle method involving the coproduct $\kow$ of $U$. Letzter's method was extended to the Kac-Moody case in \cite[Section 7]{a-Kolb14} and was used to determine the relations \eqref{eq:S=C-intro} in the case $|a_{ij}|\le 2$. A general formula for $C_{ij}(\bc)$ in case (III) was obtained in \cite[Theorem 3.6]{a-BalaKolb15}, again using Letzter's coproduct method. In \cite[Theorem 3.9]{a-BalaKolb15} it was observed that cases (I) and (II) should allow a uniform treatment, but explicit formulas were still elusive for $|a_{ij}|>3$.

Following \cite{a-CLW20} we call a quantum symmetric pair quasi-split if $X=\emptyset$. Case (II) does not appear in the quasi-split setting. Explicit formulas for the quantum Serre relations \eqref{eq:S=C-intro} in the quasi-split case (I) were given in \cite{a-CLW20} in terms of so-called $\imath$divided powers for $\cB_\bc$. The methods in \cite{a-CLW20} are calculational but do not involve Letzter's coproduct method. 

Using Letzter's coproduct method, H.~de Clercq was able to derive expressions for the quantum Serre relations \eqref{eq:S=C-intro} in general. Cases (I) and (II) are treated in \cite[Theorems 3.13, 3.19]{a-dC19p} by combinatorially involved and unwieldy formulas. Nonetheless, in case (I), de Clercq was able to use her formulas to extend the quantum Serre relations from \cite{a-CLW20} from the quasi-split case to the case of general pairs $(X,\tau)$, see \cite[Theorem 4.7]{a-dC19p}. This extension of results was also performed in \cite{a-CLW21} in a more general setting of higher Serre relations. A conceptual and compact expression for the quantum Serre relations in case (II), however, remained to be found.

In a new approach \cite{a-CKY20},
W.~R.~Casper and the authors also found explicit, conceptual formulas for the quantum Serre relations \eqref{eq:S=C-intro} in the quasi-split case. 
The resulting relations are expressed in terms of continuous $q$-Hermite polynomials. The formulas are obtained using the star product interpretation 
of quantum symmetric pairs from \cite{a-KY20}. Continuous $q$-Hermite polynomials differ from the $\imath$divided powers in \cite{a-CLW20}, 
but the resulting descriptions of the quantum Serre relations bear many similarities. The method in \cite{a-CKY20} involves minimal calculations.

In the present paper we further develop the star product interpretation of quantum symmetric pairs to give a conceptual and uniform 
treatment of the quantum Serre relations \eqref{eq:S=C-intro} in all of the there cases (I), (II), (III) for arbitrary generalized Satake diagrams. 
This, in particular, settles the hardest and subtle case (II). We give a complete description of the defining relations of $\cB_\bc$ by closed formulas,
which is independent of any of the previous approaches and is based on minimal calculations with star products.
\subsection{Statement of results}\label{sec:statement}
To formulate our results we need three families of $\cM_X^+$-valued orthogonal polynomials. For $i\in I\setminus X$ define
\begin{align}\label{eq:cZi}
\cZ_i=q_i c_i \partial^R_{\tau(i)}(T_{w_X}(E_{\tau(i)}))
\end{align}
where $\partial^R_{\tau(i)}$ is the Lusztig-Kashiwara skew-derivation for $\Uq$, see Section \ref{sec:setting}. The element $\cZ_i$ belongs to $\cM_X^+$ and hence commutes with $B_\ell$ for all $\ell\in I\setminus X$.

\medskip

{\bf A)} The \textit{rescaled univariate $q$-Hermite polynomials} $w_m(x;q_i^2)\in \cM_X^+[x]$ are defined for $m\in \N$ by the initial conditions $w_0(x;q_i^2)=1$, $w_1(x;q_i^2)=x$ and the recursion
\begin{align*}
  w_{m+1}(x;q_i^2)= x w_m(x;q_i^2)- \frac{1-q_i^{2m}}{(q_i-q_i^{-1})^2} \cZ_i w_{m-1}(x;q_i^2).
\end{align*}
The polynomials $w_m(x;q_i^2)$ are rescaled versions of the univariate continuous $q$-Hermite polynomials defined for example in \cite[14.26]{b-KLS10}, see Sections \ref{sec:biv-q-Herm}, \ref{sec:o-p} for details.

\medskip

{\bf B)} The \textit{rescaled bivariate continuous $q$-Hermite polynomials} $w_{m,n}(x,y;q_i^2,q_i^{a_{ij}})\in \cM_X^+[x,y]$ for $m,n\in \N$ are defined by the initial condition $w_{0,n}(x,y;q_i^2,q_i^{a_{ij}})=w_n(y;q_i^2)$ for all $n\in \N$ and the recursion
\begin{align*}
  x w_{m,n}(x,y;q_i^2,q_i^{a_{ij}})= w_{m+1,n}(x,y;q_i^2,&\, q_i^{a_{ij}}) +\frac{1-q_i^{2m}}{(q_i-q_i^{-1})^2} \cZ_i w_{m-1,n}(x,y; q_i^2,q_i^{a_{ij}})\\
                     +&  \frac{q_i^{2m}(1-q_i^{2n})}{(q_i-q_i^{-1})^2}q_i^{a_{ij}}\cZ_i w_{m,n-1}(x,y;q_i^2,q_i^{a_{ij}}). \nonumber
\end{align*}
The polynomials $w_{m,n}(x,y;q_i^2,q_i^{a_{ij}})$ are rescaled versions of the bivariate continuous $q$-Hermite polynomials $H_{m,n}(x,y;q,r)$ which were first introduced and studied in \cite{a-CKY20}, see Section \ref{sec:biv-q-Herm} for more details.

\medskip

{\bf C)} The \textit{rescaled deformed Chebyshev polynomials} $\Du_n(x;q_i^2,q_i^{2a_{ij}})\in \cM_X^+[x]$ are defined for $n\in \N$ by $\Du_0(x;q_i^2,q_i^{2a_{ij}})=1$, $\Du_1(x;q_i^2,q_i^{2a_{ij}})=2x$ and the recursion
\begin{align*}
  \Du_{n+1}(x;q_i^2,q_i^{2a_{ij}})= 2x\Du_n(x;q_i^2,q_i^{2a_{ij}}) - \frac{q_i^{-2a_{ij}}-q_i^{2(n+1)}}{(1-q_i^{2(n+1)})(q_i-q_i^{-1})^2}\cZ_i \Du_{n-1}(x;q_i^2,q_i^{2a_{ij}}).
\end{align*}
The polynomials $\Du_n(x;q_i^2,q_i^{2a_{ij}})$ are obtained by rescaling from a family of univariate orthogonal polynomials $\Dc_n(x;q,r)\in \field[x]$ for $n\in \N$, $r\in \field$, which we call  
{\em{deformed Chebyshev polynomials of the second kind}}. The polynomials $\Dc_n(x;q,r)$ are defined by the initial conditions $\Dc_0(x; q, r) =1$, $\Dc_1(x; q, r) =2x$ and the recursion 
\[
\Dc_{n+1}(x; q, r) = 2x \Dc_{n}(x; q, r) - \frac{r^{-1}-q^{n+1}}{1 - q^{n+1}} \Dc_{n-1}(x; q, r).
\]
In the case $r=1$ we recover the classical {\em{Chebyshev polynomials of the second kind}}.
We investigate this new sequence of orthogonal polynomials in Section \ref{sec:Chebyshev}, obtaining two generating functions for them, the first of which is
\[
\sum_{n=0}^\infty \Dc_n(x;q,r) \frac{s^n}{1-q^{n+2}} =\frac{1}{1-2xs+r^{-1}s^2} {}_3\phi_2\left(\begin{array}{c} q,x_1s,x_2s \\qa_1s,qa_2s\end{array};q,q^2\right)
\]
where $x_{1,2}$ and $a_{1,2}$ are the roots of the polynomials
\begin{align*}
1-2xs+s^2=(1- x_1s)(1-x_2s), \quad
1-2xs+r^{-1}s^2=(1-a_1s)(1-a_2s)
\end{align*}
and ${}_3\phi_2$ is the basic hypergeometric function, see \cite[1.10]{b-KLS10}. The second generating function in Proposition \ref{prop:Du-genfn} displays the connection to classical Chebyshev polynomials of the second kind.

\medskip

The above three families of rescaled orthogonal polynomials provide the main ingredients to write down the defining relations \eqref{eq:S=C-intro} in the cases (I) and (II). For any $\field$-algebra $A$ and any noncommutative polynomial $w(x,y)=\sum_{s,t}\lambda_{st} x^s y^t\in A[x,y]$ and any $a_1, a_2,a_3\in A$ we write
\begin{align}\label{eq:zcurve-wxy}
a_3 \curvearrowright w(a_1,a_2) = \sum_{s,t} a_1^{s} a_3 a_2^{t} \lambda_{st}.
\end{align}
Note that the coefficients $\lambda_{st}\in A$ are written at the very right side in \eqref{eq:zcurve-wxy} which will be crucial in case (II).
With these notations we are able to formulate the main result of this paper.
\begin{thm}\label{thm:intro}
  For any symmetrizable Kac-Moody algebra $\gfrak$, generalized Satake diagram $(X,\tau)$ and $\bc\in \cC$ the relations \eqref{eq:S=C-intro} in Theorem \ref{thm:initial-ref} are given as follows:

  \noindent{\bf Cases (I) \& (II):} If $i\in I\setminus X$ with $\tau(i)=i$ and $j\in I$ with $j\neq i$ then
  \begin{align}
    &\sum_{n=0}^{1-a_{ij}}(-1)^n \begin{bmatrix} 1-a_{ij}\\ n\end{bmatrix}_{q_i} B_j \curvearrowright w_{1-a_{ij}-n,n}(B_i, B_i;q_i^2,q_i^{a_{ij}}) \label{eq:casesI+II}\\
      &= \delta_{j\in X}\Bigg(\frac{q_i^{-a_{ij}(a_{ij}+1)}(q_i^2;q_i^2)_{-a_{ij}}}{(q_i-q_i^{-1})^2(q_j-q_j^{-1})}\partial_j^R(\cZ_i)K_j u_{-a_{ij}-1}(B_i;q_i^2,q_i^{2a_{ij}}) \nonumber\\
      & \phantom{= \delta_{j\in X}xx} - \frac{q_i^{a_{ij}(a_{ij}+1)}(q_i^{-2};q_i^{-2})_{-a_{ij}}}{(q_i-q_i^{-1})^2(q_j-q_j^{-1})}
      K_j^{-1 }\partial_j^L(\cZ_i) u_{-a_{ij}-1}(B_i;q_i^{-2},q_i^{-2a_{ij}}) \Bigg).\nonumber
  \end{align}
  \noindent{\bf Case (III):} If $i,j\in I\setminus X$ and $\tau(i)=j\neq i$ then
  \begin{align*}
    S_{ij}(B_i,B_j)=\frac{q_i^{a_{ij}-2} (q_i^{2};q_i^{2})_{1-a_{ij}}}{(q_i-q_i^{-1})^2} B_i^{-a_{ij}} K_j K_i^{-1}\cZ_i
    + \frac{(q_i^{-2};q_i^{-2})_{1-a_{ij}}}{(q_i-q_i^{-1})^2}B_i^{-a_{ij}} K_i K_j^{-1} \cZ_j
  \end{align*}
  In all other cases relation \eqref{eq:S=C-intro} is given by $S_{ij}(B_i,B_j)=0$.
\end{thm}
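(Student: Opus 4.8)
The plan is to obtain all three cases from the star product realization of $\cB_\bc$ developed in \cite{a-KY20,a-CKY20}. Theorem \ref{thm:initial-ref} already guarantees $S_{ij}(B_i,B_j)=C_{ij}(\bc)$ with $C_{ij}(\bc)\in\cF_{\deg(i,j)-1}(\cB_\bc)$, so the entire task is the explicit evaluation of the lower-order element $C_{ij}(\bc)$. I would carry out the computation in the graded vector space underlying $\cB_\bc$ equipped with the star product $\star$, whose zeroth-order product realizes the \emph{undeformed} relations among the $B_i$ and whose lower-order terms are produced by a single contraction rule: the interaction of two factors $B_i$ with $\tau(i)=i$ outputs the element $\cZ_i\in\cM_X^+$. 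The first step is to make this contraction rule, together with the transport of $B_j$ and of the $\cM_X^+$-coefficients across a word in $B_i$, completely explicit; the convention of writing the coefficients on the far right in \eqref{eq:zcurve-wxy} is exactly what is needed to track this transport in case (II).

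The second step is to identify the three families of rescaled orthogonal polynomials as the normal-ordered monomials for this contraction. The defining recursion of the univariate $q$-Hermite polynomials $w_m(x;q_i^2)$ is precisely the rule rewriting a power $B_i^m$ in normal order with respect to the $\cZ_i$-contraction, and the bivariate recursion for $w_{m,n}(x,y;q_i^2,q_i^{a_{ij}})$ is the analogous rule for the two $B_i$-towers flanking an inserted factor, the variables $x$ and $y$ recording the left and right towers. Feeding these normal forms into the quantum Serre element expresses $S_{ij}(B_i,B_j)$ as the left-hand side of \eqref{eq:casesI+II}. The structural point to verify is that its top-filtration (``fully uncontracted'') contribution vanishes --- this is the quantum symmetric pair shadow of the classical Serre relation and should reduce to the $q$-binomial identity already responsible for $S_{ij}$ in $\Uq$ --- so that the whole expression reduces to the contracted remainder, which is the content of the right-hand side.

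It then remains to evaluate the contracted remainder case by case. Case (III) is the most direct: since $B_i$ and $B_j$ are interchanged by $\tau$, the only admissible contraction pairs a $B_i$ with a $B_j$, the $B_i$-tower never self-interacts, and a single contraction produces the two terms in $\cZ_i$ and $\cZ_j$ with the stated scalar prefactors; this should be cross-checked against \cite[Theorem 3.6]{a-BalaKolb15}. In case (I), where $j\in I\setminus X$ and $\tau(i)=i$, the factor $B_j$ carries both a lowering part $F_j$ and a raising part built from $T_{w_X}(E_{\tau(j)})$; I expect the two resulting contractions to fall into complementary weight spaces and cancel, forcing $C_{ij}(\bc)=0$ in accordance with the factor $\delta_{j\in X}$, and the quasi-split specialization must recover \cite{a-CKY20}.

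The main obstacle is case (II), $j\in X$, which previous approaches left without a conceptual closed form. Here $B_j=F_j$ is a single root vector that genuinely contracts against $\cZ_i\in\cM_X^+$, the contraction being measured by the Lusztig--Kashiwara skew-derivations; the two surviving terms $\partial_j^R(\cZ_i)$ and $\partial_j^L(\cZ_i)$ come from applying the contraction on the two sides of the $B_i$-word and are distinguished by opposite powers of $K_j$. The delicate feature is that, after this single contraction with $F_j$, the residual dependence on $B_i$ is governed not by the $q$-Hermite recursion but by a genuinely different three-term recurrence, which I would identify with the rescaled deformed Chebyshev polynomials $\Du_{-a_{ij}-1}(B_i;q_i^2,q_i^{2a_{ij}})$ and $\Du_{-a_{ij}-1}(B_i;q_i^{-2},q_i^{-2a_{ij}})$. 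Establishing this recurrence, and thereby pinning down the two $q$-Pochhammer prefactors in \eqref{eq:casesI+II}, is the crux; the properties of the deformed Chebyshev polynomials developed in Section \ref{sec:Chebyshev} are precisely the tool needed to complete it.
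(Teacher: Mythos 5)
Your outline is the paper's own strategy: realize $\cB_\bc$ as a star product $(\cA,\ast)$ on the partial parabolic subalgebra via the Letzter map, rewrite the ordinary quantum Serre relation $S_{ij}(F_i,F_j)=0$ of $\cA$ in star-product normal form, and recognize the rescaled univariate and bivariate continuous $q$-Hermite polynomials as the normal-ordered expressions for $F_i^m$ and $F_i^mF_jF_i^n$. Your sketches of cases (I) and (III) track Proposition \ref{prop:FimFjFin} and Section \ref{sec:caseIII} in substance. (A small correction to case (I): nothing ``cancels between complementary weight spaces''; the cross terms are each zero, because the twisted skew-derivation $\partial^L_{i,X}$ driving the star product annihilates $F_j$ for $j\in I\setminus X$, $j\neq i$, by a weight argument, so the recursion closes on the bivariate Hermite polynomials with no remainder.)

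The genuine gap is in case (II), precisely at the step you call the crux and then defer. First, the remainder after contracting $F_j$ against $\cZ_i$ is \emph{not} governed by a three-term recurrence that can be matched directly against the Chebyshev recursion \eqref{eq:Du-recursion}. What the star product actually produces (Lemma \ref{lem:Fiast-j} and the ansatz \eqref{eq:ansatz}) are doubly indexed remainder polynomials $\rho_{m,n},\sigma_{m,n}$ obeying \emph{inhomogeneous} two-variable recursions whose source terms are the univariate Hermite polynomials $w_{m+n-1}$ (Lemma \ref{lem:rho-sigma-recursion}). The deformed Chebyshev polynomial appears only after forming the full quantum Serre combination $\sum_{m}(-1)^m\begin{bmatrix}1-a_{ij}\\ m\end{bmatrix}_{q_i}\rho_{1-a_{ij}-m,m}(x)$, and collapsing that double sum is where the real work lies: the paper needs the generating-function identity $\phi=-s^2\,\eta\,\psi$ of Lemma \ref{phi-psi}, in which the ${}_3\phi_2$-series $\eta$ of Proposition \ref{prop:Du-genfn} enters, followed by the $q$-binomial vanishing computation of Lemma \ref{lem:PN}. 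Your proposal contains no mechanism for this collapse. Second, you have no route to the second term on the right-hand side of \eqref{eq:casesI+II}: the paper never computes the $\sigma_{m,n}$ combination directly, but deduces it from the $\rho_{m,n}$ one by applying the antilinear isomorphism $\Phi:\cB_\bc\to\cB_{\bc'}$ of Theorem \ref{thm:bar} (the generalized bar involution) and separating the $K_j$- from the $K_j^{-1}$-components (proof of Theorem \ref{thm:caseII}); this is exactly what produces the parameters $q_i^{-2},q_i^{-2a_{ij}}$ and the prefactor $(q_i^{-2};q_i^{-2})_{-a_{ij}}$. Since the $\sigma$-recursion is structurally different from the $\rho$-recursion (its inhomogeneous coefficient is $1-q_i^{2(m+n)}$ rather than $1-q_i^{2m}$), the symmetry you invoke --- ``applying the contraction on the two sides of the $B_i$-word'' --- does not by itself determine the $\widetilde{d_{ij}}$-term; you need either the bar involution or a second, separate generating-function analysis, and the proposal supplies neither.
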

We will show in Proposition \ref{prop:Serre-bi-uni} that the left hand side of Equation \eqref{eq:casesI+II} can be expressed in terms of rescaled univariate continuous $q$-Hermite polynomials as
\begin{align*}
  \sum_{n=0}^{1-a_{ij}}(-1)^n \begin{bmatrix} 1-a_{ij}\\ n\end{bmatrix}_{q_i} w_{1-a_{ij}-n}(B_i;q_i^2)\,B_j\, w_n(B_i;q_i^{-2}).
\end{align*}
However, the above expression has to be interpreted with care. In case (II), following the convention \eqref{eq:zcurve-wxy}, all powers of $\cZ_i$ for $w_{1-a_{ij}-n}(B_i;q_i^2)$ have to be written to the right side of the factor $B_j$. This subtlety is not relevant in case (I), as in this case $\cZ_i$ and $B_j$ commute.

As mentioned above, the formula in case (III) already appeared in \cite[Theorem 3.6]{a-BalaKolb15}, but we give an independent proof using star products.
\subsection{Methods}
Recall the filtration $\cF$ of $\cB_\bc$ defined by \eqref{eq:filt-def}. The associated graded algebra $\mathrm{gr}(\cB_\bc)$ is isomorphic to the partial parabolic subalgebra
\begin{align*}
  \cA=\field\langle F_i, E_j,K_\beta\,|\,i\in I, j\in X, \beta \in Q^\Theta \rangle \subset \Uq.
\end{align*}
We lift the isomorphism $\gr(\cB_\bc)\cong \cA$ to an explicit $\field$-linear isomorphism $\psi:\cB_\bc \rightarrow \cA$ which we call the \textit{Letzter map}, as a similar map appeared in \cite{a-Letzter19}. We use the Letzter map to push forward the algebra structure of $\cB_\bc$ to a new algebra structure $(\cA,\ast)$ on the vector space $\cA$. The algebra structure $(\cA,\ast)$ is a star product on the $\N$-graded algebra $\cA$ in the sense of \cite[Definition 5.1]{a-KY20}. This means in particular that the algebra $(\cA,\ast)$ has the same generating set as $\cA$ and that its defining relations are those of $\cA$ re-expressed in terms of the star product $\ast$. To obtain the quantum Serre relations \eqref{eq:S=C-intro} for $\cB_\bc$ it hence suffices to rewrite the quantum Serre relations $S_{ij}(F_i,F_j)=0$ for $\cA$ in terms of the star product on $\cA$.

To describe the star product on $\cA$, we use Radford's biproduct decomposition \cite{a-Radford85} for the negative parabolic subalgebra of $\Uq$, to obtain a tensor product decomposition
\begin{align}\label{eq:AHRX}
  \cA\cong \cH\ot \cR_X.
\end{align}
Here $\cH=\cF_0(\cB)=\cA_0$ and $\cR_X$ is a subalgebra of coinvariants which is generated in degree one. The star product on $\cA$ is uniquely determined by the maps 
\begin{align}\label{eq:muLf}
  \mu^L_f(u)=f\ast u - fu \qquad \mbox{for $f\in (\cR_X)_1=\cR_X\cap \cA_1$, $u\in \cR_X$.}
\end{align}
These maps can be expressed in terms of twisted Lusztig-Kashiwara skew-derivations $\partial_{i,X}^{L/R}=T_{w_X}\circ \partial_i^{L/R}\circ T_{w_X}^{-1}$.
In the cases (I) and (II), inductive arguments based on the maps \eqref{eq:muLf} show that
\begin{align}\label{eq:FimFjFin-intro}
  F_i^mF_jF_i^n=& F_j\curvearrowright w_{m,n}(F_i\stackrel{\ast}{,}F_i;q_i^2,q_i^{a_{ij}})\\
  &\qquad + \delta_{j\in X}\Big(\partial^R_j(\cZ_i)K_j \rho_{m,n}(F_i)^\ast + K_j^{-1}\partial_j^L(\cZ_i)\sigma_{m,n}(F_i)^\ast\Big)\nonumber
\end{align}
for some polynomials $\rho_{m,n}(x), \sigma_{m,n}(x)\in \cM_X^+[x]$, see Proposition \ref{prop:FimFjFin} and Lemma \ref{lem:Fiast-j}. The symbol $\ast$ appearing on the right hand side of the above equation indicates that all polynomials are evaluated in $(\cA,\ast)$, see Section \ref{sec:o-p} for the precise notation.
To obtain formula \eqref{eq:casesI+II} we hence need to determine the quantum Serre combination
\begin{align}\label{eq:qSerre-sigma}
  \sum_{n=0}^{1-a_{ij}}(-1)^n \begin{bmatrix} 1-a_{ij}\\ n\end{bmatrix}_{q_i} \rho_{1-a_{ij}-n}(F_i)^\ast
\end{align}
and similarly for $\sigma_{m,n}$. The inductive argument from Lemma \ref{lem:Fiast-j} provides recursive formulas for $\rho_{m,n}(x)$ and $\sigma_{m,n}(x)$. Using a generalization of the bar-involution for $\cB_\bc$, which was implicit in \cite{a-AV20p} and has been made explicit in \cite{a-Kolb21p}, we show that it suffices to consider the quantum Serre combination \eqref{eq:qSerre-sigma}. In Section \ref{sec:PN} we determine \eqref{eq:qSerre-sigma} using a generating function approach which naturally leads to the deformed Chebyshev polynomials of the second kind. 
\subsection{Organization of the paper}
In Section \ref{sec:QSP-PPS} we fix notation and present background material on quantum groups and quantum symmetric pairs. In particular, in Sections \ref{sec:parabolic} and \ref{sec:partial-parabolic} we recall how Radford's biproduct leads to the tensor product decomposition \eqref{eq:AHRX}. Moreover, we define the Letzter map in Section \ref{sec:Letzter} and recall the generalized bar-involution in Section \ref{sec:anti-iso}.

Section \ref{sec:star-para} develops the star product on the partial parabolic subalgebra $\cA$. After recalling the definition of a star product in Section \ref{sec:star-def}, we determine the maps $\mu_f^L$ for $(\cA,\ast)$ in Section \ref{sec:Bc-star}. In Section \ref{sec:twisted-skew} we then determine the values of the twisted skew-derivations $\partial^{L/R}_{i,X}$ which will be needed later to derive Equation \eqref{eq:FimFjFin-intro}.

In Section \ref{sec:orth-polyn} we discuss the three families of orthogonal polynomials given in A), B) and C) above. We first recall the univariate and bivariate continuous $q$-Hermite polynomials and list essential properties of their rescaled versions in Section \ref{sec:biv-q-Herm}. In Section \ref{sec:wmn-Serre-combi} we show how the quantum Serre combination of $w_{m,n}(x,y;q_i^2,q_i^{2a_{ij}})$ can be expressed in terms of $w_m(x;q_i^2)$ and $w_m(x;q_i^{-2})$. In Section \ref{sec:Chebyshev} we introduce the deformed Chebyshev polynomials of the second kind and determine two generating functions for them.

The final Section \ref{sec:GenRel} contains the proof of Theorem \ref{thm:intro}. In Section \ref{sec:caseI} we derive Equation \eqref{eq:casesI+II} in case (I). In Section \ref{sec:jinX} we derive Equation \eqref{eq:FimFjFin-intro} in the subtle case (II). The quantum Serre combination \eqref{eq:qSerre-sigma} is determined in Section \ref{sec:PN}. This allows us to prove formula \eqref{eq:casesI+II} also in case (II). Finally, Section \ref{sec:caseIII} contains the proof of case (III) of Theorem \ref{thm:intro}.
\section{Quantum symmetric pairs and partial parabolic subalgebras}\label{sec:QSP-PPS}
In this introductory section we expand on the notation given in Section \ref{sec:problem} and recall some standard constructions for quantized enveloping algebras, in particular the decomposition of a quantized standard parabolic as a Radford biproduct \cite{a-Radford85}. We then recall that quantum symmetric pair coideal subalgebras $\cB_\bc$ have a filtration such that the associated graded algebra is isomorphic to a large subalgebra $\cA$ of a quantized standard parabolic. This isomorphism can be lifted to a filtered linear isomorphism $\psi:\cB_\bc\rightarrow \cA$ which we call the Letzter map. This map provides the interpretation of $\cB_\bc$ as a star product deformation of $\cA$ in Section \ref{sec:star-para}.
\subsection{The setting}\label{sec:setting}
Let $\gfrak$ be a symmetrizable Kac-Moody algebra with generalized Cartan matrix $(a_{ij})_{i,j\in I}$ where $I$ is a finite set. Let $\{d_i\,|\,i\in I\}$ be a set of relatively prime positive integers such that the matrix $(d_ia_{ij})$ is symmetric. Let $\Pi=\{\alpha_i\,|\,i\in I\}$ be the set of simple roots for $\gfrak$ and let $Q=\Z\Pi$ be the root lattice. Consider the symmetric bilinear form $(\cdot,\cdot):Q\times Q \rightarrow \Z$ defined by $(\alpha_i,\alpha_j)=d_ia_{ij}$ for all $i,j\in I$. Let $W$ be the corresponding Weyl group with simple reflections $s_i$ for $i\in I$. Let $\gfrak'=[\gfrak,\gfrak]$ be the derived subalgebra of $\gfrak$.

Throughout the paper we fix a field $k$ of characteristic 0 and set $\field=k(q)$. The quantized enveloping algebra $U=U_q(\gfrak')$ is the $\field$-algebra with generators $E_i, F_i, K_i^{\pm 1}$ for $i \in I$ and defining relations given in \cite[3.1]{b-Lusztig94}. In particular, the generators $E_i, F_i$ satisfy the quantum Serre relations
\begin{align*}
     S_{ij}(E_i,E_j)=0=S_{ij}(F_i,F_j) 
\end{align*}
for all $i,j\in I$, where
\begin{align}\label{eq:qSerre}
    S_{ij}(x,y)=\sum_{\ell=0}(-1)^\ell \stirling{1-a_{ij}}{\ell}_{q_i}x^{1-a_{ij}-\ell}y x^\ell \in \field[x,y]
\end{align}
with $q_i=q^{d_i}$ denotes the quantum Serre polynomial, see \cite[Corollary 33.1.5]{b-Lusztig94}.
The algebra $\Uq$ is a Hopf algebra with coproduct $\kow$ given by
 \begin{align}\label{eq:kow-def}
    \kow(E_i){=}E_i\ot 1+ K_i\ot E_i, \quad \kow(F_i){=}F_i\ot K_i^{-1}+1 \ot F_i, \quad \kow(K_i){=}K_i\ot K_i
 \end{align}
for all $i\in I$. For $\beta=\sum_{i\in I}n_i \alpha_i\in Q$ write $K_\beta=\prod_{i\in I}K_i^{n_i}$.

We write $\N=\{0,1,2,\dots\}$ and let $Q^+=\N\Pi$ be the positive cone in the root lattice. For any $\mu\in Q^+$ we let $U^+_\mu=\mathrm{span}_\field\{E_{i_1} \dots E_{i_\ell}\,|\, \sum_{j=1}^\ell \alpha_{i_j}=\mu\}$ be the corresponding root space, where $\mathrm{span}_\field$ denotes the $\field$-linear span. Moreover, define $U^-_{-\mu}=\omega(U^+_\mu)$ where $\omega:U\rightarrow U$ is the algebra automorphism given in \cite[3.1.3]{b-Lusztig94}. We will use the Lusztig--Kashiwara skew-derivations $\partial^R_i, \partial^L_i:U^\pm\rightarrow U^\pm$ for $i\in I$, which are uniquely determined by
\begin{align}
    E_i y - y E_i &=\frac{K_i \partial^L_i(y) - \partial^R_i(y)K_i^{-1}}{q_i-q_i^{-1}} \label{eq:EyyE}, \\
    F_i x - x F_i &= \frac{K_i^{-1} \partial^L_i(x) - \partial^R_i(x)K_i}{q_i-q_i^{-1}}\label{eq:FxxF}
\end{align}
for $x\in U^+$ and $y\in U^-$, see \cite[Proposition 3.1.6]{b-Lusztig94}. The skew-derivations $\partial_i^L$ and $\partial_i^R$ satisfy the relations
\begin{align}
   \partial_i^L(fg)&=\partial_i^L(f)g +q^{(\alpha_i,\mu)}f \partial_i^L(g),\label{eq:partialL}\\
   \partial_i^R(fg)&=q^{(\alpha_i,\nu)}\partial_i^R(f)g +f \partial_i^R(g)\label{eq:partialR}
\end{align}
for all $f\in U^-_{-\mu}$, $g\in U^-_{-\nu}$ and for all $f\in  U^+_{\mu}$, $g\in U^+_{\nu}$. 

For any $i\in I$ let $T_i:U\rightarrow U$ be the algebra automorphism denoted by $T''_{i,1}$ in \cite[37.1]{b-Lusztig94}. The skew-derivations  $\partial_i^L$ and $\partial_i^R$ detect elements in the intersection $U^\pm \cap T_i(U^{\pm})$. More precisely, for $x\in U^+$ and $y\in U^-$ we have the following equivalences:
\begin{align}
    x&\in T_i^{-1}(U^+) & &\Longleftrightarrow & \partial_i^L(x)&=0,\label{eq:T_i-equivalence+L}\\
    x&\in T_i(U^+) & &\Longleftrightarrow & \partial_i^R(x)&=0,\label{eq:T_i-equivalence+R}\\
    y&\in T_i^{-1}(U^-) & &\Longleftrightarrow & \partial_i^L(y)&=0,\label{eq:T_i-equivalence}\\
    y&\in T_i(U^-) & &\Longleftrightarrow & \partial_i^R(y)&=0,\label{eq:T_i-equivalence-R}
\end{align}
see \cite[38.1.6, 37.2.4]{b-Lusztig94}. 

Given a subset $X \subseteq I$, denote 
\[
Q_X=\sum_{j\in X} \Z \alpha_j \quad \mbox{and} \quad Q_X^+=\sum_{j\in X}\N \alpha_j.
\]
We write $\cM_X$ to denote the subalgebra of $U$ generated by $\{E_j, F_j, K_j^{\pm 1}\,|\,j\in X\}$, and $\cM^+_X$ and 
$\cM^-_X$ for the subalgebras generated by $\{E_j\,|\,j\in X\}$ and $\{F_j\,|\,j\in X\}$, respectively.
  We write $G_X^+$ and $G_X^-$ to denote the subalgebras of $U$ generated by $\{E_jK_j^{-1}\,|\,j\in X\}$ and $\{F_jK_j\,|\,j\in X\}$, respectively. Moreover, we write $G^\pm$ for $G^\pm_I$. 
\subsection{Parabolic subalgebras}\label{sec:parabolic}
Let $X\subseteq I$ be a subset of finite type. The parabolic subalgebra
\begin{align*}
  \cP_X=\field\langle F_i, E_j, K_i^{\pm 1}\,|\, i\in I, j\in X\rangle
\end{align*}
is a Hopf subalgebra of $\Uq$ and the corresponding Levi factor
\begin{align*}
  \cL_X=\field\langle F_j, E_j, K_i^{\pm 1}\,|\, i\in I, j\in X\rangle
\end{align*}
is a Hopf subalgebra of $\cP_X$. The parabolic subalgebra and the Levi factor have triangular decompositions
\begin{align}\label{eq:PL-triang}
  \cP_X\cong \cM_X^+ \ot U^0 \ot U^-, \qquad \cL_X \cong \cM_X^+\ot U^0 \ot \cM_X^-, 
\end{align}
respectively. There is a surjective Hopf algebra homomorphism $\pi_X:\cP_X\rightarrow \cL_X$ defined by
\begin{align*}
   \pi_X|_{\cL_X}=\id_{\cL_X} \quad \mbox{and} \quad \pi_X(F_i)= 0 \qquad \mbox{for all $i\in I\setminus X$.} 
\end{align*}
The structure of Hopf algebras with a projection onto a Hopf subalgebra was investigated in detail by D.~Radford in \cite{a-Radford85}. In the following we recall some of his results in our setting. All the material of this section is known to the experts, but we include some proofs for the convenience of the reader.

Consider the left coaction of $\cL_X$ on $\cP_X$ given by
\begin{align*}
  \kow_{\cL_X}=(\pi_X \ot \id)\circ \kow: \cP_X \rightarrow \cL_X\otimes \cP_X
\end{align*}  
and define a subalgebra $\cR_X\subset \cP_X$ by
\begin{align*}
  \cR_X={}^{\cL_X}\cP_X=\{a\in \cP_X\,|\, \kow_{\cL_X}(a)=1 \ot a\}. 
\end{align*}
The algebra $\cR_X$ is is a right $\cL_X$-module algebra under the right adjoint action given by $\ad_r(h)(a)= S(h_{(1)})a h_{(2)}$ for all $a\in\cR_X, h\in \cL_X$. Hence we can form the smash product $\cL_X\ot \cR_X$, which is an associative algebra with the multiplication
\begin{align*}
  (h\ot a)(h'\ot a') = h h'_{(1)} \ot \ad_r(h'_{(2)})(a)a'.
\end{align*}
The following Theorem is obtained by translating \cite[Theorem 3(d)]{a-Radford85} from left to right.
\begin{thm}\label{thm:Radford}
  The multiplication map $\cL_X \ot \cR_X \rightarrow \cP_X$ is an isomorphism of algebras.
\end{thm}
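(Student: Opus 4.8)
The plan is to invoke Radford's structure theorem for Hopf algebras with a projection, cited as \cite[Theorem 3(d)]{a-Radford85}, and to verify that our setting satisfies its hypotheses. The theorem in Radford's original formulation concerns a Hopf algebra $H$ equipped with a Hopf algebra map to a Hopf subalgebra which splits the inclusion; here the roles are played by $\cP_X$ together with the surjection $\pi_X:\cP_X\to\cL_X$ onto the Hopf subalgebra $\cL_X\subset\cP_X$. First I would record the two facts that have already been established in the excerpt: that $\cL_X$ is a Hopf subalgebra of $\cP_X$ (from Section \ref{sec:parabolic}) and that $\pi_X$ is a surjective Hopf algebra homomorphism satisfying $\pi_X|_{\cL_X}=\id_{\cL_X}$, so that $\pi_X\circ\iota=\id_{\cL_X}$ for the inclusion $\iota:\cL_X\hookrightarrow\cP_X$. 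These are precisely the data to which Radford's theorem applies.

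Next I would explain the translation from Radford's left-handed conventions to the right-handed ones used here. Radford's theorem produces, from the projection $\pi_X$, the algebra of coinvariants together with the appropriate module-algebra structure, and asserts that the multiplication map from the smash product to $\cP_X$ is an isomorphism. In our formulation the relevant object is $\cR_X={}^{\cL_X}\cP_X$, the \emph{left} coinvariants for the left coaction $\kow_{\cL_X}=(\pi_X\ot\id)\circ\kow$, and $\cR_X$ is made into a right $\cL_X$-module algebra via the right adjoint action $\ad_r(h)(a)=S(h_{(1)})a h_{(2)}$. I would verify that $\ad_r$ indeed preserves $\cR_X$ and is an action by algebra maps, so that the smash product $\cL_X\ot\cR_X$ with multiplication $(h\ot a)(h'\ot a')=h h'_{(1)}\ot \ad_r(h'_{(2)})(a)a'$ is a well-defined associative algebra; this is the standard compatibility between the coinvariant subalgebra and the adjoint action and follows from the Hopf axioms together with the fact that $\pi_X$ is a coalgebra map. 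With these checks in place, Radford's theorem \cite[Theorem 3(d)]{a-Radford85}, read with left and right interchanged, states exactly that the multiplication map $m:\cL_X\ot\cR_X\to\cP_X$, $h\ot a\mapsto ha$, is an isomorphism of algebras.

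The substance of the argument therefore lies in the bookkeeping of the left-right transposition rather than in any new computation. The main obstacle I expect is ensuring that the particular conventions chosen in the excerpt — a \emph{left} coaction $\kow_{\cL_X}$ defining coinvariants on the one hand, but a \emph{right} adjoint action $\ad_r$ defining the module-algebra structure on the other — are mutually compatible and match one of the four handedness variants of Radford's theorem after applying the antipode where needed. Concretely, one must confirm that $m$ is an algebra homomorphism for the smash-product multiplication (which amounts to the identity $a h'=h'_{(1)}\ad_r(h'_{(2)})(a)$ inside $\cP_X$, a direct consequence of the definition of $\ad_r$ and the antipode axiom), that $m$ is surjective (every element of $\cP_X$ lies in $\cL_X\cdot\cR_X$), and that $m$ is injective (equivalently, $\cP_X$ is free as a left $\cL_X$-module on $\cR_X$); all three are supplied by Radford's theorem once the transposition is correctly matched. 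I would close by remarking that the triangular decompositions \eqref{eq:PL-triang} furnish an independent sanity check, since they identify $\cR_X$ concretely and make the freeness of $\cP_X$ over $\cL_X$ transparent at the level of vector spaces.
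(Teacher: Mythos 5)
Your proposal takes essentially the same approach as the paper: the paper also obtains this theorem by directly citing Radford's result \cite[Theorem 3(d)]{a-Radford85} applied to the projection $\pi_X:\cP_X\to\cL_X$ and translating the conventions from left to right, with the coinvariants $\cR_X$, the right adjoint action $\ad_r$, and the smash product set up exactly as you describe. The compatibility checks you list (that $\ad_r$ preserves $\cR_X$, that the smash product is associative, and the identity $ah'=h'_{(1)}\ad_r(h'_{(2)})(a)$) are precisely the bookkeeping the paper leaves implicit in that translation.
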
  
\begin{rema}
  The statement of \cite[Theorem 3]{a-Radford85} goes beyond the above theorem. The algebra $\cR_X$ is a Hopf algebra in the category of right Yetter-Drinfeld modules over $\cL_X$. One can hence form Radford's biproduct $\cL_X\times \cR_X$ (also known as the bosonization \cite{a-Majid94}), which is a Hopf algebra that coincides with the smash product $\cL_X\ot \cR_X$ as an algebra. By \cite[Theorem 3(d)]{a-Radford85} the multiplication map  $\cL_X \times \cR_X \rightarrow \cP_X$ is an isomorphism of Hopf algebras. We will only need the algebra structure on $\cL_X\ot \cR_X$.
\end{rema}
The formula \eqref{eq:kow-def} for the coproduct of $F_i$ implies that $F_i\in \cR_X$ for all $i\in I\setminus X$. Moreover, as $\cR_X$ is invariant under the right adjoint action of $\cL_X$, we obtain that $\ad_r(\cL_X)(F_i) \subseteq \cR_X$ for all $i\in I\setminus X$.
\begin{cor}\label{cor:VXRX}
  The algebra $\cR_X$ is generated by the subspaces $\ad_r(\cL_X)(F_i)$ for all $i\in I\setminus X$. 
\end{cor}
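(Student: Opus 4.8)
The plan is to compare the subalgebra $\cR_X'$ generated by the subspaces $\ad_r(\cL_X)(F_i)$, $i\in I\setminus X$, against $\cR_X$ itself, leveraging Radford's isomorphism $\cL_X\ot\cR_X\cong\cP_X$ from Theorem~\ref{thm:Radford}. Since $\cR_X$ is stable under $\ad_r(\cL_X)$ and contains each $F_i$, we already have $\cR_X'\subseteq\cR_X$, so the whole task is the reverse inclusion. The computational engine throughout is the general Hopf-algebra identity $ah=\sum h_{(1)}\,\ad_r(h_{(2)})(a)$, valid for all $a,h\in\cP_X$, which is immediate from $\ad_r(h)(a)=\sum S(h_{(1)})ah_{(2)}$ and the antipode axiom.

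First I would record that $\cP_X$ is generated as an algebra by $\cL_X$ together with the elements $F_i$ for $i\in I\setminus X$: all the generators $E_j,K_i^{\pm1}$ and the $F_j$ with $j\in X$ lie in $\cL_X$, and only the $F_i$ with $i\notin X$ are missing. Next I would show that $\cL_X\cdot\cR_X'$, the span of products $hw$ with $h\in\cL_X$ and $w\in\cR_X'$, is a subalgebra. This rests on two points: that $\cL_X$ is a Hopf subalgebra, so $h_{(1)}\ot h_{(2)}\in\cL_X\ot\cL_X$ for $h\in\cL_X$; and that $\cR_X'$ is stable under $\ad_r(\cL_X)$. The latter holds because each generating subspace $\ad_r(\cL_X)(F_i)$ satisfies $\ad_r(h)\ad_r(\ell)(F_i)=\ad_r(\ell h)(F_i)\in\ad_r(\cL_X)(F_i)$, and because $\ad_r$ is a module-algebra action, so $\ad_r(h)$ sends products of generators to sums of products of generators. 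Granting these, the product $(h_1w_1)(h_2w_2)$ is straightened by moving $h_2$ leftward past $w_1$ via the identity above, landing back in $\cL_X\cdot\cR_X'$.

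Since $\cL_X\cdot\cR_X'$ is a subalgebra containing $\cL_X$ (take $w=1$) and every $F_i=\ad_r(1)(F_i)\in\cR_X'$ (take $h=1$), the generation statement of the previous paragraph forces $\cL_X\cdot\cR_X'=\cP_X$. Finally I would invoke the multiplication map $m\colon\cL_X\ot\cR_X\to\cP_X$ of Theorem~\ref{thm:Radford}: the inclusion $\cR_X'\subseteq\cR_X$ gives $\cL_X\ot\cR_X'\subseteq\cL_X\ot\cR_X$, while $m$ carries both subspaces onto all of $\cP_X$. As $m$ is \emph{injective}, this yields $\cL_X\ot\cR_X'=\cL_X\ot\cR_X$, and hence $\cR_X'=\cR_X$.

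I expect the only genuinely delicate point to be the $\ad_r(\cL_X)$-stability of $\cR_X'$ together with the bookkeeping required to close $\cL_X\cdot\cR_X'$ under multiplication; once that is in place, everything is formal. It is worth emphasizing that the concluding cancellation of the tensor factor $\cL_X$ uses the injectivity half of Radford's theorem, not merely surjectivity of $m$.
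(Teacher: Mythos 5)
Your proof is correct and follows essentially the same route as the paper's: both define the subalgebra generated by the spaces $\ad_r(\cL_X)(F_i)$, note it lies in $\cR_X$, use the straightening identity $ah=h_{(1)}\ad_r(h_{(2)})(a)$ to show the multiplication map from $\cL_X$ tensored with this subalgebra is surjective onto $\cP_X$, and then conclude equality from Theorem~\ref{thm:Radford}. Your version merely spells out two points the paper leaves implicit, namely the $\ad_r(\cL_X)$-stability of the generated subalgebra and that the final cancellation uses the injectivity of Radford's isomorphism.
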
  
\begin{proof}
  Let $V_X\subset \cP_X$ be the subalgebra generated by the subspaces $\ad_r(\cL_X)(F_i)$ for all $i\in I\setminus X$. As observed above we have $V_X\subseteq \cR_X$. The subalgebra of $\cP_X$ generated by $V_X$ and $\cL_X$ coincides with $\cP_X$. Hence the formula $a h = h_{(1)} \ad_r(h_{(2)})(a)$ for $a\in V_X$, $h\in \cL_X$ implies that the multiplication map $\cL_X\ot V_X\rightarrow \cP_X$ is surjective. Now the inclusion $V_X\subseteq \cR_X$ and Theorem \ref{thm:Radford} imply that $V_X=\cR_X$.
\end{proof}  
For all $j\in X$, $i\in I\setminus X$ one has $\ad_r(E_j)(F_i)=-K_j^{-1}[E_j,F_i]=0$. Hence by the triangular decomposition \eqref{eq:PL-triang} for $\cL_X$ we get $\ad_r(\cL_X)(F_i)=\ad_r(\cM^-_X)(F_i)\subset U^-$. By the above theorem and corollary and the triangular decompositions \eqref{eq:PL-triang} we hence get that the multiplication map
\begin{align}\label{eq:MRU-}
  \cM_X^- \ot \cR_X \rightarrow U^-
\end{align}
is an isomorphism of algebras.

We can also describe the subalgebra $\cR_X\subset U^-$ in terms of the Lusztig automorphisms.
\begin{lem} \label{lem:RTwX}
  We have
  \begin{align*}
    \cR_X&=\{a\in U^-\,|\, T_j^{-1}(a)\in U^- \mbox{ for all $j\in X$}\}=\bigcap_{j\in X} \big(U^-\cap T_j(U^-)\big).
  \end{align*}  
\end{lem}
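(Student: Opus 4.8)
The plan is to characterize $\cR_X$ via the vanishing of the Lusztig--Kashiwara skew-derivations $\partial_j^L$ for $j\in X$, using the equivalences \eqref{eq:T_i-equivalence}--\eqref{eq:T_i-equivalence-R}. The first equality to establish is
\[
\cR_X=\{a\in U^-\,|\, T_j^{-1}(a)\in U^- \mbox{ for all $j\in X$}\},
\]
and the second equality, $\{a\in U^-\,|\, T_j^{-1}(a)\in U^- \text{ for all }j\in X\}=\bigcap_{j\in X}\big(U^-\cap T_j(U^-)\big)$, should follow directly from the equivalence \eqref{eq:T_i-equivalence}, which says that for $y\in U^-$ one has $T_j^{-1}(y)\in U^-$ if and only if $\partial_j^L(y)=0$, combined with \eqref{eq:T_i-equivalence-R}, which characterizes $T_j(U^-)$ by $\partial_j^R(y)=0$. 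So the real content is matching $\cR_X$ with the common kernel of the $\partial_j^L$, and then transferring between the $T_j^{-1}$ and $T_j$ descriptions.

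\textbf{First} I would translate the coinvariance condition defining $\cR_X$ into a derivation condition. Recall $\cR_X={}^{\cL_X}\cP_X=\{a\in\cP_X\,|\,\kow_{\cL_X}(a)=1\ot a\}$ with $\kow_{\cL_X}=(\pi_X\ot\id)\circ\kow$. Since we have already observed via \eqref{eq:MRU-} that $\cR_X\subset U^-$, it suffices to work inside $U^-$. The key is to relate the reduced coproduct of $a\in U^-$ to the action of $\partial_j^L$: for $a\in U^-_{-\mu}$, the component of $\kow(a)$ landing in $\mathcal{M}_X^- U^0\ot U^-$ in the degree-one-in-$\mathcal{M}_X^-$ part is governed precisely by $\partial_j^L(a)$. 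Thus applying $\pi_X$ to the first tensor factor kills all $F_i$ with $i\notin X$, and the condition $\kow_{\cL_X}(a)=1\ot a$ forces the $F_j$-linear ($j\in X$) terms in $\pi_X(a_{(1)})\ot a_{(2)}$ to vanish, which is exactly $\partial_j^L(a)=0$ for all $j\in X$.

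\textbf{Then} by the equivalence \eqref{eq:T_i-equivalence}, $\partial_j^L(a)=0$ is the same as $T_j^{-1}(a)\in U^-$, giving the first displayed equality. For the second equality I would invoke a standard compatibility: for $j\in X$ with $X$ of finite type, $a\in U^-\cap T_j^{-1}(U^-)$ iff $a\in U^-\cap T_j(U^-)$, which can be seen by applying \eqref{eq:T_i-equivalence} and \eqref{eq:T_i-equivalence-R} together with the symmetry of the skew-derivation conditions, or more directly from the fact that both $\partial_j^L$ and $\partial_j^R$ characterize the same intersection $U^-\cap T_j(U^-)$ up to the involution $\omega$ and the braid relations. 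Intersecting over all $j\in X$ then yields $\bigcap_{j\in X}\big(U^-\cap T_j(U^-)\big)$.

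\textbf{The hard part will be} justifying cleanly that the coinvariance condition $\kow_{\cL_X}(a)=1\ot a$ is \emph{equivalent} to (not merely implied by) the vanishing of all $\partial_j^L(a)$ for $j\in X$, since $\kow_{\cL_X}(a)$ a priori contains contributions from all higher powers of the $F_j$ and not just the linear terms detected by $\partial_j^L$. The resolution is that the coalgebra filtration, together with the fact that $\mathcal{M}_X^-$ is generated by the $F_j$ and the recursive structure of the coproduct, reduces coinvariance to the vanishing of the linear part, from which the higher terms vanish automatically by an induction on the weight $\mu$. I expect this reduction — controlling the higher-order coproduct terms via the generation of $\mathcal{M}_X^-$ in degree one — to be the most delicate step; everything after it is a direct application of the quoted equivalences from \cite{b-Lusztig94}.
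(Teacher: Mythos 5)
Your plan has a left/right confusion that is fatal as written, not merely notational. The coaction defining $\cR_X$ is the \emph{left} coaction $(\pi_X\ot\id)\circ\kow$, and its coinvariants inside $U^-$ are cut out by the \emph{right} skew-derivations $\partial_j^R$, $j\in X$, not by the $\partial_j^L$. Concretely, take $\gfrak=\slfrak_3$, $X=\{1\}$, and $a=\ad_r(F_1)(F_2)=F_2F_1-qF_1F_2$; then $a\in\cR_X$ (the paper observes $\ad_r(\cL_X)(F_i)\subseteq\cR_X$ right before Corollary \ref{cor:VXRX}), and indeed $\partial_1^R(a)=0$ by \eqref{eq:partialR}, but $\partial_1^L(a)=(q^{-1}-q)F_2\neq 0$ by \eqref{eq:partialL}. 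So your central claim, that coinvariance is equivalent to $\partial_j^L(a)=0$ for all $j\in X$, is false, and the ``hard part'' you isolate cannot be carried out. You have also misread \eqref{eq:T_i-equivalence}: it says $a\in T_j^{-1}(U^-)\Leftrightarrow\partial_j^L(a)=0$, i.e.\ it characterizes when $T_j(a)\in U^-$; the condition appearing in the lemma, $T_j^{-1}(a)\in U^-$, means $a\in T_j(U^-)$, which by \eqref{eq:T_i-equivalence-R} is equivalent to $\partial_j^R(a)=0$.

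The ``standard compatibility'' you invoke to finish, $U^-\cap T_j^{-1}(U^-)=U^-\cap T_j(U^-)$, is also false: the same element $a$ lies in $U^-\cap T_1(U^-)$ but not in $U^-\cap T_1^{-1}(U^-)$, while $F_2F_1-q^{-1}F_1F_2$ lies in the latter but not the former (these two spaces are the kernels of $\partial_1^R$ and $\partial_1^L$; they are interchanged by the antiautomorphism of $U^-$ fixing the $F_i$, not equal, and $\omega$ does not help since it maps $U^-$ to $U^+$). Consequently the space your argument would actually identify, $\bigcap_{j\in X}\{a\in U^-:\partial_j^L(a)=0\}=\bigcap_{j\in X}\bigl(U^-\cap T_j^{-1}(U^-)\bigr)$, genuinely differs from $\cR_X$ already in this rank-two example: the two errors do not cancel. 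The repair is to swap $L$ and $R$ throughout: coinvariance $\Leftrightarrow$ $\partial_j^R(a)=0$ for all $j\in X$ $\Leftrightarrow$ $a\in T_j(U^-)$ for all $j$ $\Leftrightarrow$ $T_j^{-1}(a)\in U^-$ for all $j$; with this, the lemma's second equality is a tautology and needs no compatibility statement at all. The remaining step (vanishing of the linear coproduct terms forces vanishing of all higher ones) is real work; note that the paper avoids it by a different route, proving $\cR_X\subseteq\bigcap_{j\in X}\bigl(U^-\cap T_j(U^-)\bigr)$ by induction on the generators $\ad_r(\cM_X^-)(F_i)$ of $\cR_X$ from Corollary \ref{cor:VXRX}, and the reverse inclusion via the decomposition \eqref{eq:MRU-}, the PBW factorization $\cM_X^-\cong\field[F_j]\ot(\cM_X^-\cap T_j(\cM_X^-))$, and Lusztig's nondegeneracy lemma.
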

\begin{proof}
  Let $j\in X$. We first show that $T_j^{-1}(\cR_X)\subset U^-$. By Corollary \ref{cor:VXRX} it suffices to show that $T_j^{-1}(\ad_r(\cM_X^-)(F_i))\subset U^-$ for any $i\in I\setminus X$. We have $T_j^{-1}(F_i)\in U^-$. Now we proceed by induction. Assume that $u\in \cR_X$ satisfies $T_j^{-1}(u)\in U^-$ and let $\ell \in X$. Then we get 
  \begin{align*}
    T_j^{-1}(\ad_r(F_\ell)(u))&=T_j^{-1}(-F_\ell K_\ell u K_{\ell}^{-1} + u F_\ell)
  \end{align*}
  which by induction hypothesis lies in $U^-$ if $\ell\neq j$. In the case $\ell=j$ the above formula becomes
  \begin{align*}
     T_j^{-1}(\ad_r(F_\ell)(u))=[T_\ell^{-1}(u), E_\ell] K_\ell \stackrel{\eqref{eq:T_i-equivalence}}{=} \frac{1}{q_\ell-q_\ell^{-1}}\partial_\ell^R(T_\ell^{-1}(u))\in U^-.
  \end{align*}
 This implies that $T_j^{-1}(\ad_r(\cM_X^-)(F_i))\subseteq U^-$ as required.

 Conversely, assume that $u\in U^-$ satisfies $T_j^{-1}(u)\in U^-$ for all $j\in X$. We want to show that $u\in \cR_X$. For any $j\in X$ the decomposition \eqref{eq:MRU-} implies that the multiplication map
 \begin{align}\label{eq:MX-decomp}
   \field[F_j]\ot (\cM_X^-\cap T_j(M_X^-))\ot \cR_X\rightarrow U^-
 \end{align}
 is a linear isomorphism. Write $u=\sum_m h_m\ot v_m$ with linearly independent $h_m\in \cM_X^-\cong \field[F_j]\ot (\cM_X^-\cap T_j(\cM_X^-))$ and $v_m\in \cR_X$. Then the relations $T_j^{-1}(u)\in U^-$ and $T_j^{-1}(v_m)\in U^-$ imply that $h_m\in (\cM_X^-\cap T_j(\cM_X^-))$. By \cite[Lemma 1.2.15]{b-Lusztig94} we have
 \begin{align*}
    \bigcap_{j\in X} (\cM_X^-\cap T_j(\cM_X^-))\stackrel{\eqref{eq:T_i-equivalence-R}}{=} \{y\in \cM_X^-\,|\,\partial^R_j(y)=0 \mbox{ for all $j\in X$}\} = \field.
 \end{align*}
Hence all $h_m$ are scalars which implies that $u\in \cR_X$. 
\end{proof}
\begin{rema}
  In Equation \eqref{eq:MX-decomp} we used the decomposition $\cM_X^-\cong \field[F_j]\ot (\cM_X^-\cap T_j(\cM_X^-))$ which holds by the PBW Theorem for the finite type quantum enveloping algebra $\cM_X$. By the above lemma and the decomposition \eqref{eq:MRU-} in the case where $|X|=1$, we now have the same decomposition
  \begin{align}\label{eq:FiU-TiU-}
     U^-\cong \field[F_i] \ot (U^-\cap T_i(U^-))
  \end{align}
  for any $i\in I$ also in infinite type.
\end{rema}  
To rewrite the statement of Lemma \ref{lem:RTwX} we note the following fact.
\begin{lem}
  Let $w\in W$ be an element with reduced expression $w=s_{i_1}s_{i_2} \dots s_{i_m}$. Then the following relations hold
  \begin{align}
    U^- \cap T_w(U^-)&= U^- \cap T_{i_1}(U^-)\cap T_{i_1} T_{i_2}(U^-) \cap \dots \cap T_w(U^-),\label{eq:U-capTwU-1}\\
    U^- \cap T_w^{-1}(U^-)&= U^- \cap T^{-1}_{i_m}(U^-)\cap T^{-1}_{i_m} T^{-1}_{i_{m-1}}(U^-) \cap \dots \cap T^{-1}_w(U^-).\label{eq:U-capTwU-2}
  \end{align}  
\end{lem}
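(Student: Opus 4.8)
The plan is to prove both identities by reducing them to a single ``decreasing chain'' statement and then invoking the standard PBW/braid machinery for the complementary factor $U^-\cap T_w(U^-)$. First I would record the easy inclusion in \eqref{eq:U-capTwU-1}: since its right-hand side is an intersection of subspaces among which both $U^-$ (the first factor) and $T_w(U^-)=T_{i_1}\cdots T_{i_m}(U^-)$ (the last factor) appear, it is automatically contained in $U^-\cap T_w(U^-)$. Thus only the reverse inclusion is at stake, and I would reformulate it as the telescoping statement: for every $0\le k\le m-1$,
\[
  U^-\cap T_{i_1}\cdots T_{i_{k+1}}(U^-)\subseteq T_{i_1}\cdots T_{i_k}(U^-). \qquad (\heartsuit)
\]
Indeed, once $(\heartsuit)$ holds the subspaces $U^-\cap T_{i_1}\cdots T_{i_k}(U^-)$ form a decreasing chain in $k$, so the full intersection on the right of \eqref{eq:U-capTwU-1} collapses to its last term $U^-\cap T_w(U^-)$, which is exactly the claim.

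Next I would isolate $(\heartsuit)$ as the key inclusion. Writing $v=s_{i_1}\cdots s_{i_k}$ and $s=s_{i_{k+1}}$, so that $\ell(vs)=\ell(v)+1$ and $T_{i_1}\cdots T_{i_{k+1}}=T_vT_s$, the statement $(\heartsuit)$ reads $U^-\cap T_vT_s(U^-)\subseteq T_v(U^-)$; applying $T_v^{-1}$ it is equivalent to $T_v^{-1}(U^-)\cap T_s(U^-)\subseteq U^-$. Here the hypothesis $\ell(vs)>\ell(v)$, i.e. $v(\alpha_{i_{k+1}})>0$, guarantees that $T_v(F_{i_{k+1}})$ is a negative root vector lying in $U^-$, hence $F_{i_{k+1}}\in T_v^{-1}(U^-)$; this positivity is the crucial point.

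To prove the key inclusion I would use the PBW factorization attached to the reduced word. For a reduced expression $s_{j_1}\cdots s_{j_p}$ let $F_{\gamma_\ell}=T_{j_1}\cdots T_{j_{\ell-1}}(F_{j_\ell})\in U^-$ be the associated root vectors, and denote by $V_v,V_{vs}$ the spans of the ordered PBW monomials in these root vectors for $v$ and for $vs$. The two standard inputs (Lusztig, Ch.~37--40) are: (a) multiplication gives a linear isomorphism $V_w\ot(U^-\cap T_w(U^-))\xrightarrow{\ \sim\ }U^-$ for every reduced word, the case $w=s_i$ being exactly \eqref{eq:FiU-TiU-}; and (b) appending a generator multiplies the PBW factor on the right, $V_{vs}=V_v\cdot\field[T_v(F_{i_{k+1}})]$ with $T_v(F_{i_{k+1}})\in U^-$ sitting in the top position of the convex order. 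Comparing the factorizations of $U^-$ for $vs$ and for $v$ and using the uniqueness of the $V_v$-monomial expansion then forces
\[
  U^-\cap T_v(U^-)=\field[T_v(F_{i_{k+1}})]\ot\big(U^-\cap T_{vs}(U^-)\big)\ \supseteq\ U^-\cap T_{vs}(U^-),
\]
which, since $T_{vs}=T_vT_s$, is precisely $(\heartsuit)$. I expect the main obstacle to be exactly this input: that the braid operator $T_v$ carries the PBW generators for $v$ to those for $vs$ while preserving positivity, and that $U^-\cap T_w(U^-)$ is the complementary PBW factor. One could alternatively route through the skew-derivation criterion \eqref{eq:T_i-equivalence-R}, reducing $(\heartsuit)$ to a vanishing of $\partial^R$, but the positivity of the braid action is needed either way.

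Finally, identity \eqref{eq:U-capTwU-2} follows by the verbatim argument with $T_i$ replaced by $T_i^{-1}$, the skew-derivation $\partial_i^R$ replaced by $\partial_i^L$, and the criterion \eqref{eq:T_i-equivalence-R} replaced by \eqref{eq:T_i-equivalence}, applied to the reduced product $T_w^{-1}=T_{i_m}^{-1}\cdots T_{i_1}^{-1}$. In fact the two identities are dual to one another under $T_v^{-1}$, as already seen in the reformulation $T_v^{-1}(U^-)\cap T_s(U^-)\subseteq U^-$ of $(\heartsuit)$.
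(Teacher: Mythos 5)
Your overall architecture is close to the paper's: the trivial inclusion, the reduction of \eqref{eq:U-capTwU-1} to the telescoping inclusion $(\heartsuit)$, and the recognition that the positivity $\ell(vs)=\ell(v)+1\Rightarrow T_v(F_{i_{k+1}})\in U^-$ is the crux all mirror the induction the paper runs (there for \eqref{eq:U-capTwU-2}, with \eqref{eq:U-capTwU-1} recovered by applying $T_w$). The gap is in the step that is supposed to prove $(\heartsuit)$. From your inputs (a) and (b) you know that the multiplication maps $V_v\ot A\to U^-$ and $V_v\ot B\to U^-$ are both linear isomorphisms, where $A=U^-\cap T_v(U^-)$ and $B$ is the image of $\field[T_v(F_s)]\ot\big(U^-\cap T_{vs}(U^-)\big)$; but two right tensor complements of the same factor need not coincide, so ``uniqueness of the $V_v$-monomial expansion'' does not force $A=B$. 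A toy counterexample to the abstract principle: $\field[x,y]=\field[x]\ot\field[y]=\field[x]\ot\field[x+y]$, both via multiplication, yet $\field[y]\neq\field[x+y]$. If instead you try to compare the two expansions of a given $u\in A$ term by term, you first need to know that the coefficients $c\in U^-\cap T_{vs}(U^-)$ of the $vs$-expansion lie in $T_v(U^-)$ --- which is exactly $(\heartsuit)$, so the comparison is circular. (The equality $A=B$ is in fact true, but proving it requires the very argument you are missing.) The missing ingredient is a separation argument, and this is where positivity must actually be deployed: the paper conjugates the rank-one decomposition \eqref{eq:FiU-TiU-} by $T_{w'}^{-1}T_i^{-1}$, so that the complementary factor $\field[F_i]$ becomes $\field[T_{w'}^{-1}(E_iK_i)]$, whose $a$-th power for $a\ge 1$ lies in $U^+_{a\beta}K_{a\beta}$ with $\beta=w'^{-1}(\alpha_i)\in Q^+$ nonzero, by \cite[Proposition 40.2.1]{b-Lusztig94}; the triangular decomposition $U\cong U^+\ot U^0\ot U^-$ then shows that intersecting with $U^-$ annihilates all these powers, giving $U^-\cap T_w^{-1}(U^-)\subseteq T_{w'}^{-1}(U^-)$.

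A second, related problem is your input (a) itself. For symmetrizable Kac--Moody type the factorization $V_w\ot\big(U^-\cap T_w(U^-)\big)\cong U^-$ for an arbitrary reduced word is \emph{not} in \cite[Ch.~37--40]{b-Lusztig94} (Lusztig establishes PBW bases only in finite type), and in the Kac--Moody setting its standard proofs proceed by exactly the induction encoded in the present lemma; indeed the paper itself proves the instance $w=w_X$ (via \eqref{eq:MRU-} and Corollary \ref{cor:RTwX}) \emph{using} this lemma. So quoting (a) wholesale is at best overkill and at worst circular unless you supply an independent reference for it. The paper's proof is deliberately minimal: induction on $\ell(w)$, the rank-one decomposition \eqref{eq:FiU-TiU-}, Lusztig's positivity, and the triangular decomposition of $U$ --- no PBW theory beyond rank one is needed.
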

\begin{proof}
  It suffices to prove  \eqref{eq:U-capTwU-2} as  \eqref{eq:U-capTwU-1} then follows by application of $T_w$. We prove the equality \eqref{eq:U-capTwU-2} by induction on the length $l(w)=m$ of $w$. For $m=1$ there is nothing to show. Now assume that $w=s_i w'$ where $l(w)=l(w')+1$. Then $T_w=T_i T_w'$ and Equation \eqref{eq:FiU-TiU-} gives us
  \begin{align*}
    T^{-1}_w(U^-)&=T^{-1}_{w'} T^{-1}_i \big(\field[F_i] \ot (U^-\cap T_i(U^-))\big)\\
    &= T_{w'}^{-1}\big(\field[E_iK_i] \ot (T_i^{-1}(U^-)\cap U^-) \big)\\
    &= \field[T_{w'}^{-1}(E_iK_i)] \ot \big(T_w^{-1}(U^-) \cap T_{w'}^{-1}(U^-)\big).
  \end{align*}
  By \cite[Proposition 40.2.1]{b-Lusztig94} we have $T_{w'}^{-1}(E_i)\in U^+$ and hence the triangular decomposition $U^+\ot U^0 \ot U^-\cong U$ implies that
  \begin{align*}
    U^- \cap T^{-1}_w(U^-) = U^- \cap T_w^{-1}(U^-) \cap T_{w'}^{-1}(U^-).
  \end{align*}
  Now the equality \eqref{eq:U-capTwU-2} follows by induction hypothesis. 
\end{proof}  
Let $w_X$ denote the longest element in the finite parabolic subgroup $W_X$ of the Weyl group $W$. There is a diagram automorphism $\tau_X:X\rightarrow X$ such that $w_X(\alpha_j)=-\alpha_{\tau_X(j)}$ for all $j\in X$.
\begin{cor}\label{cor:RTwX}
  We have $\cR_X=U^-\cap T_{w_X}(U^-)$.
\end{cor}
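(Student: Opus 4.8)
The plan is to prove the two inclusions separately, using Lemma \ref{lem:RTwX} to rewrite $\cR_X=\bigcap_{j\in X}(U^-\cap T_j(U^-))$ and exploiting that $w_X$ is the longest element of the finite parabolic subgroup $W_X$. Throughout I fix a reduced expression $w_X=s_{i_1}s_{i_2}\cdots s_{i_N}$ with all $i_k\in X$, which exists since $w_X\in W_X$.

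First I would establish the inclusion $U^-\cap T_{w_X}(U^-)\subseteq\cR_X$. For each $j\in X$ one has $\ell(s_jw_X)=\ell(w_X)-1$, because $w_X$ is the longest element of $W_X$; hence $w_X$ admits a reduced expression beginning with $s_j$. Applying \eqref{eq:U-capTwU-1} to such an expression shows that the first factor of the resulting intersection is $U^-\cap T_j(U^-)$, so $U^-\cap T_{w_X}(U^-)\subseteq U^-\cap T_j(U^-)$. Intersecting over all $j\in X$ and invoking Lemma \ref{lem:RTwX} yields $U^-\cap T_{w_X}(U^-)\subseteq\bigcap_{j\in X}(U^-\cap T_j(U^-))=\cR_X$. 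This direction is clean and uses only the two preceding lemmas.

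The reverse inclusion $\cR_X\subseteq U^-\cap T_{w_X}(U^-)$ is where the main difficulty lies. A tempting shortcut is to show that $\cR_X$ is stable under $T_j^{-1}$ for $j\in X$ and then apply $T_{w_X}^{-1}=T_{i_N}^{-1}\cdots T_{i_1}^{-1}$ letter by letter; this \emph{fails}, since $\cR_X$ is not $T_j^{-1}$-stable. Already for $X=\{1\}$ in type $A_2$ one has $F_2\in\cR_X$ but $T_1^{-1}(F_2)$, a weight vector of weight $-(\alpha_1+\alpha_2)$, is not proportional to $T_1(F_2)$ and hence escapes the one-dimensional space $(\cR_X)_{-(\alpha_1+\alpha_2)}$. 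Instead I would argue by comparison of graded dimensions. Iterating the decomposition \eqref{eq:FiU-TiU-} along the fixed reduced word, the root vectors $F_{\beta_k}=T_{i_1}\cdots T_{i_{k-1}}(F_{i_k})$ (with $\beta_k$ running over the positive roots of the sub-root-system indexed by $X$) lie in $\cM_X^-$ since each $T_{i}$ with $i\in X$ preserves $\cM_X$, and their ordered monomials form a PBW basis of $\cM_X^-$. This gives a multiplication isomorphism $\cM_X^-\ot\big(U^-\cap T_{w_X}(U^-)\big)\xrightarrow{\sim}U^-$ of $Q^+$-graded vector spaces. Comparing with the isomorphism \eqref{eq:MRU-}, namely $\cM_X^-\ot\cR_X\xrightarrow{\sim}U^-$, and passing to graded characters gives
\[
\ch(\cM_X^-)\cdot\ch\big(U^-\cap T_{w_X}(U^-)\big)=\ch(U^-)=\ch(\cM_X^-)\cdot\ch(\cR_X).
\]
Since $\ch(\cM_X^-)$ has constant term $1$ and is therefore invertible in the ring of formal characters, we obtain $\ch\big(U^-\cap T_{w_X}(U^-)\big)=\ch(\cR_X)$. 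Combined with the weight-space inclusion $U^-\cap T_{w_X}(U^-)\subseteq\cR_X$ from the previous step, this forces equality in every weight space, completing the proof.

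The main obstacle is thus the reverse inclusion, and the essential technical input is the PBW factorization $U^-\cong\cM_X^-\ot(U^-\cap T_{w_X}(U^-))$: one must verify that the left tensor factor produced by the reduced word of $w_X$ is exactly $\cM_X^-$, equivalently that the inversion set of $w_X$ is the full positive system of the sub-root-system indexed by $X$, so that the root vectors $F_{\beta_k}$ exhaust $\cM_X^-$. Once this standard convexity statement (Lusztig, De Concini--Kac--Procesi) is in place, the character cancellation against \eqref{eq:MRU-} finishes the argument with no further computation.
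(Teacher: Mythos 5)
Your proof is correct, but for the harder inclusion it takes a genuinely different route from the paper's. The inclusion $U^-\cap T_{w_X}(U^-)\subseteq\cR_X$ is argued exactly as in the paper (a reduced expression of $w_X$ beginning with $s_j$, then \eqref{eq:U-capTwU-1} and Lemma \ref{lem:RTwX}). For the converse, the paper never counts dimensions: it invokes Corollary \ref{cor:VXRX}, by which $\cR_X$ is generated as an algebra by the subspaces $\ad_r(\cM_X^-)(F_i)$ for $i\in I\setminus X$, and checks inductively that these generators lie in the subalgebra $U^-\cap T_{w_X}(U^-)$, using $T_{w_X}^{-1}(F_i)\in U^-$ and the identity $T_{w_X}^{-1}(\ad_r(F_\ell)(u))=\frac{1}{q_\ell-q_\ell^{-1}}\,\partial^R_{\tau_X(\ell)}(T_{w_X}^{-1}(u))$, i.e.\ the same skew-derivation computation as in the first half of the proof of Lemma \ref{lem:RTwX}. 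You instead import the Schubert-cell factorization $U^-\cong\cM_X^-\ot\bigl(U^-\cap T_{w_X}(U^-)\bigr)$ and cancel graded characters against \eqref{eq:MRU-}. The bookkeeping is valid: all spaces involved are $Q$-graded with finite-dimensional weight spaces, and $\ch(\cM_X^-)$ is invertible in the completed monoid ring; note also that injectivity of your multiplication map is already forced by your first inclusion together with \eqref{eq:MRU-}, so the real content of your technical input is surjectivity. That PBW factorization is indeed standard and holds in the Kac--Moody setting, but it is not a one-line consequence of \eqref{eq:FiU-TiU-}: each iteration step requires a separation argument of the kind the paper runs around \eqref{eq:MX-decomp}, so this input has roughly the same depth as the computation it replaces. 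What your route buys is conceptual economy (no computation beyond character cancellation, and no use of Corollary \ref{cor:VXRX}); what the paper's buys is self-containedness within its own lemmas. Your cautionary remark is also correct: for $a_{ji}=-1$ the elements $T_j(F_i)$ and $T_j^{-1}(F_i)$ are the two distinct $q$-commutators, linearly independent for generic $q$, so $\cR_X$ is not $T_j^{-1}$-stable --- though the paper's induction never needs such stability, since it applies $T_{w_X}^{-1}$ as a whole rather than letter by letter.
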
  
\begin{proof}
  For any $j\in X$ we can write $w_X=s_j w'$ for some $w'\in W_X$ with $l(w')=l(w_X)-1$. Hence Equation \eqref{eq:U-capTwU-1} and Lemma \ref{lem:RTwX} imply that
  \begin{align*}
     U^-\cap T_{w_X}(U^-) \subseteq \bigcap_{j\in X} (U^- \cap T_j(U^-))=\cR_X.
  \end{align*}
  The converse inclusion is verified similarly to the first part of the proof of Lemma \ref{lem:RTwX}. Again it suffices to show that $\ad_r(\cM_X^-)(F_i) \subseteq U^-\cap T_{w_X}(U^-)$ for all $i\in I\setminus X$. By \cite[Proposition 40.2.1]{b-Lusztig94} we have $T_{w_X}^{-1}(F_i)\in U^-$ and  hence $F_i\in U^-\cap T_{w_X}(U^-)$. If $u\in U^-\cap T_{w_X}(U^-)$ and $\ell\in X$ then
  \begin{align*}
    T_{w_X}^{-1}(\ad_r(F_\ell)(u))=[T_{w_X}^{-1}(u), E_{\tau_X(\ell)}]K_{\tau_X(\ell)}  \stackrel{\eqref{eq:T_i-equivalence}}{=} \frac{1}{q_\ell-q_\ell^{-1}} \partial_{\tau_X(\ell)}^R(T_{w_X}^{-1}(u))\in U^-.
  \end{align*}
  This implies inductively that $T_{w_X}^{-1}(\ad_r(\cM_X^-)(F_i))\subset U^-$ and completes the proof of the corollary.
\end{proof}  

\subsection{Partial parabolic subalgebras}\label{sec:partial-parabolic}
Consider a subset of finite type $X\subseteq I$ and a map $\tau:I\rightarrow I$ which is an involutive diagram automorphism such that $\tau(X)=X$ and $w_X(\alpha_j)=-\alpha_{\tau(j)}$ for all $j\in X$. 
Following \cite{a-RV20}, we call a pair $(X,\tau)$ with these properties a compatible decoration. The map $\Theta=-w_X\circ \tau:Q\rightarrow Q$ is an involutive automorphism of the root lattice. Define $Q^\Theta=\{\beta\in Q\,|\,\Theta(\beta)=\beta\}$ and $U^0_\Theta=\field\langle K_\beta\,|\,\beta\in Q^\Theta\rangle$. As $Q^\Theta$ is generated by the elements of the set $\{\alpha_i-\alpha_{\tau(i)},\alpha_j\,|\,i\in I\setminus X, j\in X\}$, we see that
\begin{align*}
   U^0_\Theta=\field \langle K_j^{\pm 1}, K_i K_{\tau(i)}^{-1}\,|\,j\in X, i\in I \rangle.
\end{align*}
Consider the subalgebra $\cA=\cA(X,\tau)\subset U$ defined by
\begin{align*}
  \cA=\field\langle F_i, E_j, K_\beta\,|\, i\in I, j\in X, \Theta(\beta)=\beta\rangle.
\end{align*}
We call $\cA$ the partial parabolic subalgebra corresponding to the pair $(X,\tau)$. By definition, $\cA$ is contained in the standard parabolic subalgebra $\cP_X$, with the only difference that $\cA\cap U^0=U^0_\Theta$ is not the whole torus. In particular, $\cA$ has the triangular decomposition
\begin{align}\label{eq:A-triang}
  \cA\cong \cM_X^+ \ot U^0_\Theta \ot U^-.
\end{align}  
Additionally, consider the subalgebra $\cH=\cH(X,\tau)\subset U$ defined by
\begin{align*}
  \cH=\field \langle E_j, F_j, K_\beta\,|\,j\in X, \Theta(\beta)=\beta\rangle=\cM_X U^0_\Theta.
\end{align*}  
We call $\cH$ the partial Levi factor corresponding to the generalized Satake diagram $(X,\tau)$. The partial Levi factor $\cH$ has a triangular decomposition
\begin{align}\label{eq:cH-triang}
  \cH \cong \cM^+_X \ot U_\Theta^0 \ot \cM^-_X
\end{align}  
The partial parabolic $\cA$ is not a subbialgebra of $U$. However, the partial  Levi factor $\cH$ is a Hopf subalgebra of $U$. Comparison of the triangular decompositions \eqref{eq:A-triang} and \eqref{eq:cH-triang} with the decomposition \eqref{eq:MRU-} implies that the multiplication map
\begin{align*}
  \cH \ot \cR_X \rightarrow \cA
\end{align*}
is an isomorphism. 
\subsection{Quantum symmetric pairs}\label{sec:QSP}
Let $(X,\tau)$ be an generalized Satake diagram. Following \cite{a-RV20} this means that $(X,\tau)$ is a compatible decoration, as described at the beginning of Section \ref{sec:partial-parabolic}, and additionally, that for $i\in I\setminus X$ and $j\in X$ the implication
\begin{align}\label{eq:generalizedSatake}
  \tau(i)=i \mbox{ and } a_{ji}=-1 \quad \Longrightarrow \quad \Theta(\alpha_i)\neq -\alpha_i-\alpha_j
\end{align}  
holds. The condition \eqref{eq:generalizedSatake} is weaker than condition (3) in \cite[Definition 2.3]{a-Kolb14}. As explained in \cite[Section 4.1]{a-RV20}, the results of \cite{a-Kolb14} remain valid in this more general setting. Following \cite{a-Kolb14} we recall the definition of the quantum symmetric pair coideal subalgebra corresponding to the generalized Satake diagram $(X,\tau)$. Recall the set of parameters $\cC$ defined by \eqref{eq:C-def-intro}.
For any $\bc=(c_i)_{i\in I\setminus X}\in \cC$ let $\cB_\bc$ denote the subalgebra of $U$ generated by the partial Levi factor $\cH$ and the elements
\begin{align}\label{eq:Bi-def}
  B_i=F_i - c_i T_{w_X}(E_{\tau(i)}) K_i^{-1}.
\end{align}
\begin{rema}
   In \cite{a-Kolb14}, quantum symmetric pairs depend of a second family of parameters $\bs=(s_i)_{i\in I\setminus X}$ in a certain subset $\cS\subset \field^{I\setminus X}$. The corresponding coideal subalgebras are then denoted by $\cB_{\bc,\bs}$. By \cite[Theorem 7.1]{a-Kolb14}, for any $\bs\in \cS$ the algebra $\cB_{\bc,\bs}$ is isomorphic to $\cB_{\bc}$. As we only aim to establish defining relations for $\cB_{\bc,\bs}$, it suffices to consider the case where $\bs=(0,0, \dots,0)$. 
\end{rema}  
Recall that the algebra $\cB_\bc$ has a natural $\N$-filtration $\cF_\ast$ defined via the degree function given by \eqref{eq:filt-def}.
Hence $\cF_n(\cB_\bc)$ is the $\field$-linear span of all monomials in the generators of $\cB_\bc$ where each monomial contains at most $n$ factors $B_i$ with $i\in I\setminus X$.
Let $\gr(\cB_\bc)$ be the associated graded algebra. It follows from \cite[Theorem 7.1]{a-Kolb14} that there exists a surjective algebra homomorphism
\begin{align}\label{eq:varphi}
  \varphi:\cA \rightarrow \gr(\cB_\bc)
\end{align}
given by $\varphi(F_i)=B_i$ for all $i\in I\setminus X$ and $\varphi(h)=h$ for all $h\in \cH$. Moreover, the triangular decomposition \eqref{eq:A-triang} for $\cA$ together with \cite[Proposition 6.2]{a-Kolb14} imply that $\varphi$ is an isomorphism.

The partial parabolic subalgebra $\cA$ is $\N$-graded via the degree function given by
  \begin{align*}
     \deg(F_i)&=1 \qquad \mbox{for all $i\in I\setminus X$,}\\
     \deg(h)&=0 \qquad \mbox{for all $h\in \cH$}.
  \end{align*}
With this grading the map $\varphi$ in \eqref{eq:varphi} is an isomorphism of graded algebras. For any $n\in \N$ we let $\cA_n$ denote the $n$-th graded component of $\cA$ and we write $\cA_{<n}=\bigoplus_{m=0}^{n-1}\cA_m$.  
\subsection{The Letzter map}\label{sec:Letzter}
Let $\Upoly$ denote the subalgebra of $U$ generated by $\cA$ and the elements
$\Etil_i=E_iK_i^{-1}$, $K_i^{-1}$ for all $i\in I\setminus X$. Observe that $\cB_\bc\subset \Upoly$ as $T_{w_X}(E_{\tau(i)})K_i^{-1}\in \cA \Etil_{\tau(i)}\cA$. Let $I_\tau$ be a set of representatives of $\tau$-orbits in $I\setminus X$. In analogy to the definition of $V_X$ in the proof of Corollary \ref{cor:VXRX}, let $V_X^+\subset \cP_X$ be the subalgebra generated by the subspaces $\ad_r(\cL_X)(\Etil_i)=\ad_r(\cM_X^+)(\Etil_i)$ for $i\in I\setminus X$.
The triangular decomposition
\begin{align*}
  U\cong U^- \ot U^0 \ot G^+ \cong \cA \ot \field[K_i^{\pm 1}\,|\,i\in I_\tau]\ot V_X^+
\end{align*}
implies that there is a direct sum decomposition of vector spaces
\begin{align}\label{eq:Upoly-decomp}
  \Upoly \cong \cA \oplus \Upoly \mathrm{span}_\field\{\Etil_i, K_i^{-1}\,|\,i\in I\setminus X\} \cH.
\end{align}
Let
\begin{align}\label{eq:psi-Upoly}
  \psi:\Upoly \rightarrow \cA
\end{align}
denote the $\field$-linear projection with respect to the direct sum decomposition \eqref{eq:Upoly-decomp}.
\begin{rema}
  For $\gfrak$ of finite type, the map $\psi$ is the restriction to $\Upoly$ of a projection map $\cP:\uqg \rightarrow \cA$ given in \cite[(4.9)]{a-Letzter19}.
\end{rema}
As $\cB_\bc$ is a subalgebra of $\Upoly$  we can restrict the map $\psi$ to $\cB_\bc$. We call the map $\psi:\cB_\bc\rightarrow \cA$ the \textit{Letzter map}. The algebra $\cB_\bc$ is filtered by the filtration $\cF_\ast$ given in Section \ref{sec:QSP}. The algebra $\cA$ is graded and hence also filtered. By construction the Letzter map $\psi:\cB_\bc\rightarrow \cA$ is a linear map of filtered vector spaces. Let
\begin{align*}
  \gr(\psi):\gr(\cB_\bc)\rightarrow \gr(\cA)\cong \cA
\end{align*}
be the associated graded map and recall the isomorphism $\varphi$ from \eqref{eq:varphi}. The composition $\gr(\psi)\circ \varphi:\cA \rightarrow \cA$ is $\cH$-linear and satisfies
\begin{align*}
  \gr(\psi)\circ \varphi(F_{i_1}\dots F_{i_\ell})= F_{i_1}\dots F_{i_\ell}
\end{align*}  
for all $i_1,\dots, i_\ell\in I$. Hence $\gr(\psi)\circ \varphi=\id_\cA$. This implies that $\gr(\psi)=\varphi^{-1}$ is a linear isomorphism. This proves the following lemma which is also contained in \cite[Corollary 4.4]{a-Letzter19} for $\gfrak$ of finite type.
  \begin{lem}\label{lem:letzter}
    The Letzter map $\psi:\cB_\bc\rightarrow \cA$ is a linear isomorphism. 
  \end{lem}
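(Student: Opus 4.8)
The plan is to establish that $\psi:\cB_\bc\to\cA$ is a linear isomorphism by passing to the associated graded map and showing it is the identity. The key structural input is already in place: the algebra $\cB_\bc$ is filtered by $\cF_\ast$, the algebra $\cA$ is graded (hence filtered), and by \eqref{eq:varphi} there is a graded isomorphism $\varphi:\cA\to\gr(\cB_\bc)$ sending $F_i\mapsto B_i$ for $i\in I\setminus X$ and fixing $\cH$. So the strategy is standard for filtered objects: if a filtered linear map induces an isomorphism on associated graded pieces, then (filtrations being exhaustive and the graded pieces finite-dimensional or at least the filtration being bounded below) the map itself is an isomorphism. Thus it suffices to identify $\gr(\psi)$.

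First I would verify that $\psi$ is indeed filtered, i.e.\ that $\psi(\cF_n(\cB_\bc))\subseteq \cA_{\le n}$. This follows from the definition of $\psi$ as the projection \eqref{eq:psi-Upoly} along the decomposition \eqref{eq:Upoly-decomp}: a monomial in the generators of $\cB_\bc$ with at most $n$ factors $B_i$ ($i\in I\setminus X$) projects into the span of monomials in $\cA$ of degree at most $n$, since each $B_i=F_i-c_iT_{w_X}(E_{\tau(i)})K_i^{-1}$ contributes at most one degree-one factor $F_i$ after discarding the $\Etil$-components that land in the complement of $\cA$. Next I would form the associated graded map $\gr(\psi):\gr(\cB_\bc)\to\gr(\cA)\cong\cA$ and precompose with $\varphi$. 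The composite $\gr(\psi)\circ\varphi:\cA\to\cA$ is $\cH$-linear because both $\varphi$ and $\psi$ fix $\cH$, and on a word $F_{i_1}\cdots F_{i_\ell}$ it acts by first sending it (via $\varphi$) to the symbol of $B_{i_1}\cdots B_{i_\ell}$, then projecting back. The crucial computation is that the leading term of $B_{i_1}\cdots B_{i_\ell}$, after projection $\psi$, is exactly $F_{i_1}\cdots F_{i_\ell}$: the correction terms $-c_iT_{w_X}(E_{\tau(i)})K_i^{-1}$ either lower the filtration degree or, at top degree, are killed by $\psi$ because they lie in $\Upoly\,\mathrm{span}_\field\{\Etil_i,K_i^{-1}\}\,\cH$. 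Hence $\gr(\psi)\circ\varphi=\id_\cA$, so $\gr(\psi)=\varphi^{-1}$ is a linear isomorphism.

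The hard part is the verification of the leading-term computation, namely that after expanding the product $B_{i_1}\cdots B_{i_\ell}$ and applying the projection $\psi$, only the pure $F$-word survives at top filtration degree while all mixed terms involving the $T_{w_X}(E_{\tau(i)})K_i^{-1}$ factors are annihilated. This requires knowing that such mixed terms lie in the complementary summand of \eqref{eq:Upoly-decomp}, which in turn rests on the triangular decomposition $U\cong \cA\ot\field[K_i^{\pm1}\mid i\in I_\tau]\ot V_X^+$ and the fact that $T_{w_X}(E_{\tau(i)})K_i^{-1}\in\cA\,\Etil_{\tau(i)}\,\cA$, so that each occurrence genuinely contributes an $\Etil$-factor placing the monomial into the projected-away subspace. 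Once this is established, the remainder is formal.

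Finally, since $\gr(\psi)$ is an isomorphism and the filtration $\cF_\ast$ is exhaustive and bounded below (it is an $\N$-filtration with $\cF_{-1}=0$), a routine inductive argument on filtration degree upgrades the graded isomorphism to an isomorphism of $\psi$ itself: injectivity follows because a nonzero element of $\cF_n\setminus\cF_{n-1}$ has nonzero image in $\gr_n$, and surjectivity follows by lifting degree by degree. This yields the claim of Lemma \ref{lem:letzter}.
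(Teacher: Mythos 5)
Your proposal follows the paper's argument exactly: restrict $\psi$ to $\cB_\bc$ as a filtered linear map, form $\gr(\psi):\gr(\cB_\bc)\rightarrow\cA$, show that $\gr(\psi)\circ\varphi$ is $\cH$-linear and fixes each word $F_{i_1}\dots F_{i_\ell}$, conclude $\gr(\psi)=\varphi^{-1}$, and pass from the graded isomorphism back to $\psi$ itself via the exhaustive, bounded-below $\N$-filtration. The extra details you supply (the filteredness check, the annihilation of the $T_{w_X}(E_{\tau(i)})K_i^{-1}$ correction terms under $\psi$, and the degree-by-degree induction at the end) are precisely the steps the paper leaves implicit, so this is the same proof, just written out more fully.
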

  We collect some additional properties of the Letzter map.
  \begin{lem}\label{lem:psi-props}
    The Letzter map $\psi:\cB_\bc\rightarrow \cA$ has the following properties:
    \begin{enumerate}
      \item $\psi(ab) = \psi(a\psi(b))$ for all $a,b\in \cB_\bc$.
      \item $\psi(h_1bh_2) = h_1\psi(b) h_2$ for all $b\in \cB_\bc$, $h_1,h_2\in \cH$.
      \item $\psi(T_i(b))=T_i(\psi(b))$ for all $i\in X$.
      \item $\psi(ab)-\psi(a)\psi(b)\in \cA_{<m+n}$ for all $a\in \cF_m(\cB_\bc)$, $b\in \cF_n(\cB_\bc)$.  
    \end{enumerate}  
  \end{lem}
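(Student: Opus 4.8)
The plan is to establish the four properties of the Letzter map $\psi:\cB_\bc\to\cA$ by exploiting the defining direct sum decomposition \eqref{eq:Upoly-decomp}, together with the interplay between the filtration $\cF_\ast$ on $\cB_\bc$, the grading on $\cA$, and the algebra homomorphism $\varphi$. The key structural fact underlying everything is that $\psi$ is the $\field$-linear projection onto the summand $\cA$ along the complement $\mathcal{K}:=\Upoly\,\mathrm{span}_\field\{\Etil_i,K_i^{-1}\,|\,i\in I\setminus X\}\cH$. Notice that $\mathcal{K}$ is a two-sided ideal-like object relative to $\cH$ on the right (and the whole of $\Upoly$ on the left), and this observation will drive parts (1) and (2).

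For part (2), I would argue that both left multiplication by $h_1\in\cH$ and right multiplication by $h_2\in\cH$ preserve each of the two summands in \eqref{eq:Upoly-decomp}. Indeed $\cA$ is a subalgebra containing $\cH$, so $h_1\cA h_2\subseteq\cA$; and the complement $\mathcal{K}$ visibly satisfies $h_1\mathcal{K}h_2\subseteq\mathcal{K}$ since $\Upoly$ is closed under left multiplication by $\cH$ and the trailing factor $\cH$ absorbs right multiplication by $h_2$. Since $\psi$ is the projection determined by the decomposition, it therefore commutes with these multiplications, giving $\psi(h_1bh_2)=h_1\psi(b)h_2$. For part (1), I would write $b=\psi(b)+k$ with $k\in\mathcal{K}$ and show that $a k\in\mathcal{K}$, so that $\psi(ab)=\psi(a\psi(b)+ak)=\psi(a\psi(b))$; here closure of $\mathcal{K}$ under left multiplication by the element $a\in\cB_\bc\subset\Upoly$ is what is needed, which again follows because $\Upoly$ left-multiplies $\mathcal{K}$ into itself.

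For part (3), the natural route is to check that the automorphism $T_i$ for $i\in X$ preserves the decomposition \eqref{eq:Upoly-decomp}. The subalgebra $\cA$ is $T_i$-stable for $i\in X$ because $X$ is of finite type and $T_i$ permutes the relevant generators $F_\ell,E_j,K_\beta$ within $\cA$ (using that $\Theta$ and $w_X$ are built from $X$); similarly $T_i$ stabilizes $\Upoly$ and sends the spanning set $\{\Etil_i,K_i^{-1}\}$ together with $\cH$ into the complement. Since $T_i$ is invertible and respects both summands, it commutes with the projection $\psi$, yielding $\psi(T_i(b))=T_i(\psi(b))$. The main obstacle I anticipate is verifying the $T_i$-stability of the complementary summand $\mathcal{K}$ carefully, since $T_i$ does not simply permute the distinguished generators $\Etil_\ell=E_\ell K_\ell^{-1}$ for $\ell\in I\setminus X$ but transforms them into elements of $\cA\,\mathrm{span}_\field\{\Etil_m\}\,\cA$; one must confirm these land back in $\mathcal{K}$ rather than in $\cA$.

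Finally, part (4) is the filtered-to-graded statement and follows from the compatibility already established in the paragraph preceding the lemma. I would argue that $\psi(ab)-\psi(a)\psi(b)\in\cA_{<m+n}$ amounts to the assertion that the top-degree part of $\psi$ is multiplicative, which is precisely the identity $\gr(\psi)=\varphi^{-1}$ proved using $\gr(\psi)\circ\varphi=\id_\cA$. Concretely, for $a\in\cF_m(\cB_\bc)$ and $b\in\cF_n(\cB_\bc)$ the product $ab$ lies in $\cF_{m+n}$, and applying $\gr(\psi)$ to the symbol of $ab$ in degree $m+n$ gives the product of the symbols of $\psi(a)$ and $\psi(b)$ in $\cA_{m+n}$; the difference $\psi(ab)-\psi(a)\psi(b)$ therefore has vanishing component in degree $m+n$, i.e.\ lies in $\cA_{<m+n}$. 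I would phrase this by passing to associated graded and invoking that $\gr(\psi)$ is the inverse of the graded algebra isomorphism $\varphi$, so it is in particular multiplicative on leading terms.
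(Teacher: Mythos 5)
Your proposal is correct and follows essentially the same route as the paper's own (very terse) proof: parts (1)--(3) rest on observing that the complementary summand $\cK=\Upoly\,\mathrm{span}_\field\{\Etil_i,K_i^{-1}\,|\,i\in I\setminus X\}\cH$ in \eqref{eq:Upoly-decomp} is a left ideal of $\Upoly$, that \eqref{eq:Upoly-decomp} is a decomposition of $\cH$-bimodules, and that both summands are $T_i$-stable for $i\in X$, while part (4) follows from $\gr(\psi)=\varphi^{-1}$ being an algebra isomorphism. The only difference is that you spell out (and rightly flag) the verification that $T_i$ maps $\cK$ back into $\cK$ --- a check the paper asserts without detail, and which indeed goes through since $T_i(\Etil_\ell)\in\cM_X^+\,\Etil_\ell\,\cH\subset\cK$ for $\ell\in I\setminus X$.
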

  \begin{proof}  
    The kernel of the projection map \eqref{eq:psi-Upoly} is a left ideal in $\Upoly$. Hence the kernel of the Letzter map is a left ideal in $\cB_\bc$. This implies statement (1). Similarly, statement (2) follows from the fact that the decomposition \eqref{eq:Upoly-decomp} is a sum of $\cH$-bimodules. And statement (3) holds because both summands in the decomposition \eqref{eq:Upoly-decomp} are invariant under $T_i$ for $i\in X$.

Recall from the discussion above Lemma \ref{lem:letzter} that the Letzter map $\psi:\cB_\bc\rightarrow \cA$ is a linear isomorphism of filtered vector spaces, and that the associated graded map $\gr(\psi):\gr(\cB_\bc)\rightarrow \cA$ is an algebra isomorphism. This implies property (4).   
  \end{proof}
  \subsection{An antilinear isomorphism of quantum symmetric pair coideals}\label{sec:anti-iso}
Let $\overline{\phantom{m}}:\Uq\rightarrow \Uq$, $u\mapsto \overline{u}$ be the Lusztig bar involution defined in \cite[3.1.12]{b-Lusztig94}. 
Denote by $\rho_X$ and $\rho_X^\vee$ the half sum of positive roots and coroots, respectively, for the root system corresponding to $X \subseteq I$. 
For any $\bc=(c_i)_{i\in I\setminus X}\in \field^{I\setminus X}$ define $\bc'=(c_i')\in \field^{I\setminus X}$ by
\begin{align}\label{eq:c'-def}
     c_i'=(-1)^{2\alpha_i(\rho_X^\vee)}q^{(\alpha_i,\Theta(\alpha_i)-2\rho_X)} \overline{c_{\tau(i)}}
\end{align}
for all $i\in I\setminus X$. Note that $\bc\in \cC$ if and only if $\bc'\in \cC$. In the  following we will compare the quantum symmetric pair coideal subalgebras $\cB_\bc$ and $\cB_{\bc'}$. To indicate the parameters, we write $B_i'$ for the generators \eqref{eq:Bi-def} of $\cB_{\bc'}$ for $i\in I\setminus X$.
  
In \cite[Theorem 7.4]{a-AV20p} Appel and Vlaar considered an extended version of the quasi K-matrix constructed in \cite{a-BalaKolb19}. 
Using this extended version of the quasi K-matrix one obtains the following theorem. A proof the theorem in the notations of the present paper will appear in \cite{a-Kolb21p}.
\begin{thm}\label{thm:bar}
  For any $\bc\in \cC$ there exists a $k$-algebra isomorphism $\Phi:\cB_\bc\rightarrow \cB_{\bc'}$ such that
  \begin{align*}
     \Phi|_\cH=\overline{\phantom{m}}|_\cH \qquad \mbox{and} \qquad \Phi(B_i)=B_i' \quad \mbox{for all $i\in I\setminus X$.}
  \end{align*}
In particular, we have $\Phi(q)=q^{-1}$ and $\Phi(K_\beta)=K_{-\beta}$ for all $\beta\in Q^\Theta$.
\end{thm}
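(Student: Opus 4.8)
The plan is to construct the isomorphism $\Phi$ by leveraging the extended quasi $K$-matrix of \cite{a-AV20p} as the intertwiner between the bar involution on $\Uq$ and a bar-type involution on $\cB_\bc$. Recall that the Lusztig bar involution $\overline{\phantom{m}}:\Uq\to\Uq$ is the $k$-algebra automorphism sending $q\mapsto q^{-1}$, $E_i\mapsto E_i$, $F_i\mapsto F_i$, $K_i\mapsto K_i^{-1}$. First I would observe that $\overline{\phantom{m}}$ restricts to the partial Levi factor $\cH$ as a $k$-algebra automorphism, since $\cH$ is generated by $E_j,F_j$ for $j\in X$ and $K_\beta$ for $\beta\in Q^\Theta$, and these are permuted among themselves (with $\overline{K_\beta}=K_{-\beta}$). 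The content of the theorem is that this restriction extends compatibly across the non-Levi generators $B_i$, but \emph{not} back to the original algebra $\cB_\bc$: instead it lands in $\cB_{\bc'}$ with the twisted parameters \eqref{eq:c'-def}.

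The key step is to exhibit the element $\Upsilon$ (the extended quasi $K$-matrix from \cite[Theorem 7.4]{a-AV20p}, living in a suitable completion of $\Uq$) and to use its defining intertwining property. The quasi $K$-matrix is characterized by an identity of the form $\Upsilon\,\overline{B_i}=B_i'\,\Upsilon$ for all $i\in I\setminus X$, where $\overline{B_i}$ denotes the image of $B_i$ under the $\Uq$-bar involution and $B_i'$ is the generator of $\cB_{\bc'}$. Concretely, since $\overline{F_i}=F_i$, $\overline{E_{\tau(i)}}=E_{\tau(i)}$, $\overline{K_i^{-1}}=K_i$ and $\overline{c_i}$ is the coefficientwise bar, one computes $\overline{B_i}=F_i-\overline{c_i}\,\overline{T_{w_X}(E_{\tau(i)})}\,K_i$; the twisting factor in \eqref{eq:c'-def} is exactly the scalar needed so that conjugation by $\Upsilon$ converts this into $B_i'$. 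I would then define
\[
  \Phi(u)=\Upsilon\,\overline{u}\,\Upsilon^{-1}\qquad\text{for }u\in\cB_\bc,
\]
and verify first that $\Phi(B_i)=B_i'$ (from the intertwining relation) and $\Phi|_\cH=\overline{\phantom{m}}|_\cH$ (because $\Upsilon$ commutes with $\cH$, a standard property of the quasi $K$-matrix reflecting that it is built from the non-Levi part). Since $\cB_\bc$ is generated by $\cH$ together with the $B_i$, and $\Phi$ is manifestly a $k$-algebra homomorphism on the ambient completion, it follows that $\Phi$ maps $\cB_\bc$ into $\cB_{\bc'}$ and is an algebra map. The final assertions $\Phi(q)=q^{-1}$ and $\Phi(K_\beta)=K_{-\beta}$ are immediate from $\overline{q}=q^{-1}$, $\overline{K_\beta}=K_{-\beta}$, and the fact that $\Upsilon$ centralizes $U^0_\Theta$.

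To see that $\Phi$ is an \emph{isomorphism}, I would run the same construction with $\bc'$ in place of $\bc$, producing $\Phi':\cB_{\bc'}\to\cB_{(\bc')'}$. A short check using the definition \eqref{eq:c'-def} shows $(\bc')'=\bc$ (the sign and the $q$-power invert under a second application of the bar), so $\Phi'\circ\Phi$ is conjugation by $\Upsilon'\overline{\Upsilon}\,$ composed with the square of the $\Uq$-bar involution, which is the identity; by the uniqueness of the quasi $K$-matrix this composite is $\id_{\cB_\bc}$, giving a two-sided inverse.

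The main obstacle is controlling the completion in which $\Upsilon$ lives and checking that conjugation by $\Upsilon$ genuinely preserves the (non-completed) coideal subalgebra $\cB_\bc$ rather than only its completion. This is precisely the technical work that \cite{a-AV20p} and \cite{a-Kolb21p} carry out: one must know that $\Upsilon B_i\Upsilon^{-1}$, a priori an infinite sum, collapses to the finite expression $B_i'$, and that the intertwining identity holds on the nose. Granting those results, the algebraic steps above are routine; the real input is the existence and intertwining property of the extended quasi $K$-matrix, which is why the theorem is attributed to \cite{a-AV20p} with the detailed proof deferred to \cite{a-Kolb21p}.
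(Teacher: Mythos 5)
Your proposal follows essentially the same route as the paper: the paper does not prove Theorem \ref{thm:bar} itself but obtains it from the extended quasi $K$-matrix of Appel--Vlaar \cite[Theorem 7.4]{a-AV20p}, deferring the detailed verification (precisely the completion and intertwining issues you flag) to \cite{a-Kolb21p}. Your sketch of the mechanics --- conjugation by $\Upsilon$ composed with the Lusztig bar involution, commutation of $\Upsilon$ with $\cH$, and the check $(\bc')'=\bc$ to produce the two-sided inverse --- is consistent with that construction and relies on exactly the same external inputs the paper itself cites.
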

\section{The star product on the partial parabolic subalgebra}\label{sec:star-para}
In this section we recall the general notion of a star product on an $\N$-graded algebra $A$. We then recall the fact that $\cB_\bc$ can be viewed as a star product deformation of the partial parabolic subalgebra $\cA$. To describe this star product, we need to know the action of certain twisted skew-derivations $\partial^L_{i,X}$ on $\cR_X$. In Lemmas \ref{lem:partialLXFi}, \ref{lem:pLiXFn}, \ref{lem:pLiXadFu}, \ref{lem:pLiXadFFn} we calculate the required values of $\partial^L_{i,X}$ explicitly.
  \subsection{Star products}\label{sec:star-def}
 We will extensively use the notion of star product algebras and their properties from \cite{a-KY20}.
For an $\N$-graded $\field$-algebra $A=\bigoplus_{j\in \N} A_j$ and $m\in \N$, denote $A_{<m}=\bigoplus_{j=0}^{m-1}A_j$ and $A_{\le m}=\bigoplus_{j=0}^{m}A_j$.
\begin{defi}\label{def:star-prod} Assume that $A=\bigoplus_{j\in \N} A_j$ is an $\N$-graded $\field$-algebra. 
A star product on $A$ is an associative bilinear operation $* : A \times A \to A$,
such that
\[
  a * b - ab \in A_{< m+ n} \qquad \mbox{for all $a \in A_m$, $b \in A_n$.}
\]
A star product $\ast$ on $A$ is called $0$-equivariant
if
\[
a * h = ah \quad \mbox{and} \quad h * a= h a \quad \mbox{for all} \quad h \in A_0, a \in A. 
\]
\end{defi}
In this setting $(A,\ast)$ is a filtered algebra with $\cF_m (A) := A_{\leq m}$ and
\[
\gr (A, *) \cong A.
\]
If $A$ is generated in degrees 0 and 1, then $(A,*)$ is generated by $\cF_1 (A)$ and we have 
the following properties (see \cite[Lemma 5.2.(ii)]{a-KY20}):
\begin{lem} 
\label{lem:sprod-isom}
Let $A$ be an $\N$-graded $\field$-algebra generated in degrees 0 and 1. 
\begin{enumerate}
\item[(i)] Every 0-equivariant star product on $A$ is uniquely determined by the $\field$-linear map $\mu^L:A_1\to \End_{\field}(A)$, 
\[
\mu^L_f(a) = f * a  - f a \qquad \mbox{for all $f \in A_1, a \in A$}.
\]
\item[(ii)] If $U$ is a graded subalgebra of $A$ such that $A_0 U = U A_0 = A$, then every 0-equivariant star product on $A$ is uniquely determined by the $\field$-linear map 
$\mu^L : U_1 \to \Hom_{\field}(U,A)$, 
\[
   \mu^L_f(u) = f*u - fu \qquad \mbox{for all $f \in U_1, u \in U$.}
\]
\end{enumerate}
\end{lem}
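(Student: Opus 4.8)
The plan is to prove each statement as a uniqueness assertion: given two $0$-equivariant star products $\ast$ and $\ast'$ on $A$ that induce the same map $\mu^L$, I will show $\ast=\ast'$. Throughout I will repeatedly use two structural facts: $0$-equivariance, which gives $h\ast a = ha$ and $a\ast h = ah$ for $h\in A_0$ and arbitrary $a\in A$, and associativity, which lets me freely regroup iterated star products.

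For part (i), I would introduce the $\field$-subspace
\[
B=\{a\in A\,|\, a\ast b = a\ast' b \text{ for all } b\in A\},
\]
which is a subspace since both products are $\field$-linear in the left argument. It contains $A_0$ by $0$-equivariance, and it contains $A_1$ because for $f\in A_1$ one has $f\ast b = fb + \mu^L_f(b) = f\ast' b$, using that $\mu^L$ agrees for the two products. The crucial point is that $B$ is closed under $\ast$: if $a,a'\in B$ then in particular $a\ast a' = a\ast' a'$, and for any $b$ associativity together with $a,a'\in B$ gives $(a\ast a')\ast b = a\ast(a'\ast b) = a\ast'(a'\ast' b) = (a\ast' a')\ast' b = (a\ast a')\ast' b$. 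Hence $B$ is a $\ast$-subalgebra containing $\cF_1(A)=A_0\oplus A_1$. Since $(A,\ast)$ is generated by $\cF_1(A)$, it follows that $B=A$, that is $\ast=\ast'$, proving that $\mu^L$ determines the star product.

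For part (ii), I would reduce to part (i) by reconstructing the full map $A_1\to\End_\field(A)$ of part (i) out of the restricted data $\mu^L\colon U_1\to\Hom_\field(U,A)$. The grading together with $A_0U=A$ yields $A_1=A_0U_1$, so for $f=\sum_i h_i g_i\in A_1$ with $h_i\in A_0$ and $g_i\in U_1$, repeated use of $0$-equivariance and associativity gives $f\ast a=\sum_i h_i\,(g_i\ast a)$; thus it suffices to compute $g\ast a$ for $g\in U_1$ and arbitrary $a\in A$. Expanding $a=\sum_j h_j' u_j$ with $h_j'\in A_0$, $u_j\in U$ (possible since $A_0U=A$) gives $g\ast a=\sum_j (g h_j')\ast u_j$, and expanding each $gh_j'\in A_1=A_0U_1$ as $\sum_k h_k'' g_k''$ with $g_k''\in U_1$ reduces everything to the terms $g_k''\ast u_j = g_k''u_j + \mu^L_{g_k''}(u_j)$, which lie in the supplied data. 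This recovers the map of part (i), and part (i) then finishes the argument.

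The main obstacle is the second part: one must verify that every regrouping in the reduction genuinely stays within the given data, i.e.\ that each star multiplication encountered ultimately has its left factor in $A_0$ (handled by $0$-equivariance) or is of the form $g\ast u$ with $g\in U_1$ and $u\in U$ (handled by the restricted $\mu^L$). This is precisely why the scalar and $A_0$-factors must be tracked carefully and why the decompositions $A=A_0U$ and $A_1=A_0U_1$ are invoked at each step; beyond this bookkeeping no genuinely new computation is required.
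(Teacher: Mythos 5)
Your proof is correct and complete. Note that the paper itself does not prove this lemma: it quotes it from \cite[Lemma 5.2(ii)]{a-KY20}, so there is no in-paper argument to compare against; the proof there rests on the same ingredients you use, namely that $(A,\ast)$ is a filtered algebra generated by $\cF_1(A)=A_0\oplus A_1$, together with associativity and $0$-equivariance. Within that common framework your packaging is slightly different and arguably cleaner: for (i) you replace the natural induction on filtration degree by the observation that $B=\{a\in A\,|\,a\ast b=a\ast' b \mbox{ for all } b\in A\}$ is a $\ast$-subalgebra (your chain $(a\ast a')\ast b=a\ast(a'\ast b)=a\ast'(a'\ast' b)=(a\ast' a')\ast' b=(a\ast a')\ast' b$ is exactly the needed verification) containing $\cF_1(A)$, hence equal to $A$; for (ii) you reduce to (i) via $A_1=A_0U_1$ and $A=A_0U$, forcing every star multiplication that appears into one of the two controlled forms $h\ast(\cdot)$ with $h\in A_0$ or $g\ast u$ with $g\in U_1$, $u\in U$. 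Two minor remarks: first, you only ever invoke the hypothesis $A_0U=A$, so the assumption $UA_0=A$ is not needed for uniqueness in this left-handed formulation; second, the non-uniqueness of the decompositions $a=\sum_j h_j'u_j$ and $gh_j'=\sum_k h_k''g_k''$ in (ii) is harmless for precisely the reason you indicate --- one fixes a single decomposition and evaluates both star products against it, obtaining the same expression in the supplied data --- though stating this explicitly would make the last step airtight.
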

\subsection{The pull-back of the algebra structure on $\cB_\bc$}\label{sec:Bc-star}
We can use the Letzter map $\psi:\cB_\bc\rightarrow \cA$ to define a new algebra structure $\ast$ on $\cA$ by
\begin{align*}
  a\ast b = \psi\big(\psi^{-1}(a)\psi^{-1}(b)\big) \qquad \mbox{for all $a,b\in \cA$.}
\end{align*}  
By Lemma \ref{lem:psi-props}.(4) the binary operation $\ast$ defines a star product on $\cA$, and by Lemma \ref{lem:psi-props}.(2) this star product is $0$-equivariant. 
Hence Lemma \ref{lem:sprod-isom} implies that the star product $\ast$ on $\cA$ is uniquely determined by the family of maps
$\mu^L_f\in \Hom_{\field}(\cR_X,\cA)$ for all $f\in (\cR_X)_1=\cR_X\cap \cA_1$ defined by
\begin{align*}
  \mu_f^L(u) = f\ast u - fu \qquad \mbox{for all $u\in \cR_X$.}
\end{align*}  
As $\mu^L_{\ad_r(h)(f)}(u)=S(h_{(1)})\mu_f^L(h_{(2)}u)$ for all $f\in (\cR_X)_1$, $h\in \cH$ and $u\in \cR_X$, the $0$-equivariant star product $\ast$ on $\cA$ is uniquely determined by the maps $\mu_{F_i}^L$ for $i\in I\setminus X$. In the following Lemma we determine these maps.
\begin{lem}\label{lem:Fiastu}
  For all $i\in I\setminus X$ and all $u\in \cR_X$ we have
  \begin{align}\label{eq:Fiastu}
     F_i\ast u = F_i u - c_i \frac{q^{-(\alpha_i,\Theta(\alpha_i))}}{q_i-q_i^{-1}}K_{-\alpha_i-\Theta(\alpha_i)} T_{w_X} \circ \partial_{\tau(i)}^L\circ T_{w_X}^{-1}(u).
  \end{align}  
\end{lem}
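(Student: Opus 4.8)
The plan is to reduce the computation of $F_i\ast u$ to a single application of the Letzter map and then evaluate that projection by commuting a simple root vector past $u$. First I would use Lemma \ref{lem:psi-props}.(1), together with $\psi|_{\cA}=\id$ and $\psi(B_i)=F_i$ — the latter holding because the second summand $c_iT_{w_X}(E_{\tau(i)})K_i^{-1}$ of $B_i$ projects to zero under $\psi$, which is re-derived as the $u=1$ case of the computation below — to rewrite
\[
F_i\ast u=\psi\big(\psi^{-1}(F_i)\,\psi^{-1}(u)\big)=\psi\big(B_i\,\psi(\psi^{-1}(u))\big)=\psi(B_i u).
\]
Since $B_i=F_i-c_iT_{w_X}(E_{\tau(i)})K_i^{-1}$ and $F_iu\in\cA$ is fixed by $\psi$, this yields $\mu^L_{F_i}(u)=F_i\ast u-F_iu=-c_i\,\psi\big(T_{w_X}(E_{\tau(i)})K_i^{-1}u\big)$, so everything reduces to evaluating this last projection.

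Second, I would strip off the twist $T_{w_X}$. Because the two summands of the decomposition \eqref{eq:Upoly-decomp} are invariant under $T_j$ for $j\in X$, the projection $\psi$ commutes with $T_{w_X}$ on $\Upoly$; and since $u\in\cR_X=U^-\cap T_{w_X}(U^-)$ by Corollary \ref{cor:RTwX}, the element $\tilde u:=T_{w_X}^{-1}(u)$ lies in $U^-$. Using that $T_{w_X}$ is an algebra automorphism with $K_i^{-1}=T_{w_X}(K_{w_X\alpha_i}^{-1})$, I then obtain
\[
\psi\big(T_{w_X}(E_{\tau(i)})K_i^{-1}u\big)=T_{w_X}\Big(\psi\big(E_{\tau(i)}\,K_{w_X\alpha_i}^{-1}\,\tilde u\big)\Big),
\]
which replaces the complicated vector $T_{w_X}(E_{\tau(i)})$ by the simple generator $E_{\tau(i)}$ with $\tau(i)\in I\setminus X$.

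Third, I would compute the inner projection directly. After commuting $K_{w_X\alpha_i}^{-1}$ to the left, I apply the skew-derivation relation \eqref{eq:EyyE} to expand $E_{\tau(i)}\tilde u$ into $\tilde uE_{\tau(i)}$ and the two terms carrying $\partial_{\tau(i)}^L(\tilde u)$ and $\partial_{\tau(i)}^R(\tilde u)$. The heart of the argument is deciding which of these survive $\psi$: collecting torus factors, the $\tilde uE_{\tau(i)}$ term ends in $\Etil_{\tau(i)}K_{\alpha_{\tau(i)}-w_X\alpha_i}$ and the $\partial^R$ term ends in $K_{\tau(i)}^{-1}$ times a torus element, so both lie in the kernel of $\psi$, whereas the $\partial^L$ term equals $K_{\alpha_{\tau(i)}-w_X\alpha_i}\,\partial_{\tau(i)}^L(\tilde u)\in\cH\,U^-\subset\cA$ and is therefore fixed. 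The key identity making this bookkeeping work is $\alpha_{\tau(i)}-w_X\alpha_i\in Q^{\Theta}$, which follows from $\Theta=-w_X\tau$ and $\tau w_X=w_X\tau$; I expect this determination of the surviving term — the careful tracking of torus factors against membership in $Q^{\Theta}$ — to be the main obstacle. Finally, applying $T_{w_X}$ and simplifying the scalar via $q^{(w_X\alpha_i,\alpha_{\tau(i)})}=q^{-(\alpha_i,\Theta(\alpha_i))}$ (Weyl-invariance of the form), $w_X\alpha_{\tau(i)}=-\Theta(\alpha_i)$, and $q_{\tau(i)}=q_i$ (as $\tau$ preserves the symmetrizers) produces exactly the right-hand side of \eqref{eq:Fiastu}.
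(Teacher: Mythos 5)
Your proposal is correct and follows essentially the same route as the paper's proof: reduce $F_i\ast u$ to $\psi(B_i u)$ via Lemma \ref{lem:psi-props}.(1), untwist by $T_{w_X}$ using Lemma \ref{lem:psi-props}.(3) and Corollary \ref{cor:RTwX}, then apply \eqref{eq:EyyE} and keep only the $\partial^L_{\tau(i)}$ term, whose torus factor $K_{\alpha_{\tau(i)}-w_X(\alpha_i)}$ lies in $U^0_\Theta$. The only difference is organizational: the paper forms the commutator $[T_{w_X}(E_{\tau(i)}),u]$ before untwisting, whereas you untwist first and then explicitly discard the two terms lying in $\ker\psi$.
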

\begin{proof}
  Write $u=\psi(b)$ for some $b\in \cB_\bc$. Using parts (1) and (3) of Lemma \ref{lem:psi-props} we calculate
  \begin{align*}
    F_i\ast u &= \psi(B_i b)\\
    &= \psi\big((F_i - c_i T_{w_X}(E_{\tau(i)})K_i^{-1})\psi(b) \big)\\
    &= F_i u - c_i q^{-(\alpha_i,\Theta(\alpha_i))} \psi\big(K_i^{-1}[T_{w_X}(E_{\tau(i)}),u] \big)\\
    &= F_i u - c_i  q^{-(\alpha_i,\Theta(\alpha_i))} T_{w_X}\bigg(\psi\big(K_{w_X(\alpha_i)}^{-1}[E_{\tau(i)}, T_{w_X}^{-1}(u)] \big)\bigg).
  \end{align*}
  Using Equation \eqref{eq:EyyE} and the definition of the projection $\psi$ we obtain
  \begin{align*}
    F_i\ast u &= F_i u - c_i \frac{q^{-(\alpha_i,\Theta(\alpha_i))}}{q_i-q_i^{-1}} T_{w_X} \big(K_{w_X(\alpha_i)}^{-1} K_{\tau(i)} \partial^L_{\tau(i)}(T_{w_X}^{-1}(u)) \big)\\
    &= F_i u - c_i \frac{q^{-(\alpha_i,\Theta(\alpha_i))}}{q_i-q_i^{-1}} K_{-\alpha_i - \Theta(\alpha_i)} \,T_{w_X} \big( \partial^L_{\tau(i)}( T_{w_X}^{-1}(u))\big)
  \end{align*}
  which gives the desired formula.
\end{proof}
\subsection{Twisted skew-derivations}\label{sec:twisted-skew}
Recall Corollary \ref{cor:RTwX} and the subalgebra $G_X^+$ defined in Section \ref{sec:setting}. The decomposition \eqref{eq:MRU-} implies that $T_{w_X}:U^-\rightarrow G_X^+\ot \cR_X$ is an algebra isomorphism. 
To shorten notation define twisted skew-derivations $\partial^L_{i,X}, \partial^R_{i,X}:G_X^+\ot\cR_X \rightarrow G_X^+\ot \cR_X$ by
\begin{align*}
  \partial^{L}_{i,X}=T_{w_X} \circ \partial^{L}_i \circ T_{w_X}^{-1}, \qquad
  \partial^{R}_{i,X}=T_{w_X} \circ \partial^{R}_i \circ T_{w_X}^{-1}
\end{align*}
for all $i\in I$.
The skew-derivation properties \eqref{eq:partialL}, \eqref{eq:partialR} of $\partial^L_i$ and $\partial^R_i$ imply that
\begin{align}
  \partial_{i,X}^L(fg)&=\partial_{i,X}^L(f)g +q^{(w_X(\alpha_i),\mu)}f \partial_{i,X}^L(g) \label{eq:partialLX}\\
  \partial_{i,X}^R(fg)&=q^{(w_X(\alpha_i),\nu)}\partial_{i,X}^R(f)g +f \partial_{i,X}^R(g) \label{eq:partialRX}
  \end{align}
for all $f\in (G_X^+\ot \cR_X)_{-\mu}$, $g\in (G_X^+\ot \cR_X)_{-\nu}$. To understand the star product better, we need to know the action of the twisted skew-derivation $\partial^L_{i,X}$ on the elements of $\cR_X$.
\begin{lem}\label{lem:partialLXFi}
  For any $i\in I \setminus X$ we have
  \begin{align}\label{eq:partialLXFi}
      \partial^L_{i,X}(F_i) = q^{(\alpha_i,\alpha_i-w_X(\alpha_i))} K_{\alpha_i-w_X(\alpha_i)} \partial_i^R(T_{w_X}(E_i)).
  \end{align}  
\end{lem}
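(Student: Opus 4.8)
The plan is to evaluate the commutator $[T_{w_X}(E_i),F_i]$ in two different ways and compare. Set $\mu=w_X(\alpha_i)-\alpha_i\in Q_X^+$. Since $i\in I\setminus X$, Lusztig's \cite[Proposition 40.2.1]{b-Lusztig94} gives $T_{w_X}(E_i)\in U^+$ and $T_{w_X}^{-1}(F_i)\in U^-$; the latter together with Corollary \ref{cor:RTwX} shows $F_i\in\cR_X=U^-\cap T_{w_X}(U^-)$, so that $\partial^L_{i,X}(F_i)$ and $\partial^R_{i,X}(F_i)$ are defined. Moreover $\partial_i^L(T_{w_X}^{-1}(F_i))$ has weight $-\mu\in -Q_X^+$, hence lies in $\cM_X^-$, so $\partial^L_{i,X}(F_i)\in T_{w_X}(\cM_X^-)=G_X^+$, and likewise $\partial^R_{i,X}(F_i)\in G_X^+$.

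First I would apply the automorphism $T_{w_X}$ to the defining relation \eqref{eq:EyyE}, taken for $y=T_{w_X}^{-1}(F_i)\in U^-$. Using $T_{w_X}(K_i)=K_{w_X(\alpha_i)}$ and the definition of the twisted skew-derivations, this yields
\begin{equation}\label{eq:pf-star}
[T_{w_X}(E_i),F_i]=\frac{K_{w_X(\alpha_i)}\partial^L_{i,X}(F_i)-\partial^R_{i,X}(F_i)K_{w_X(\alpha_i)}^{-1}}{q_i-q_i^{-1}}.
\end{equation}
On the other hand, inserting $x=T_{w_X}(E_i)\in U^+$ into \eqref{eq:FxxF} gives
\begin{equation}\label{eq:pf-starstar}
[T_{w_X}(E_i),F_i]=\frac{\partial_i^R(T_{w_X}(E_i))K_i-K_i^{-1}\partial_i^L(T_{w_X}(E_i))}{q_i-q_i^{-1}}.
\end{equation}
Equating the numerators of \eqref{eq:pf-star} and \eqref{eq:pf-starstar} produces a single identity in $U$.

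The key observation is that all four resulting terms lie in $U^0U^+$. Indeed, every weight-$\mu$ element of $G_X^+$ has the form $K_{-\mu}P$ with $P\in\cM_X^+$, since in each generator $E_jK_j^{-1}$ the torus contribution is fixed by the weight; thus I can write $\partial^L_{i,X}(F_i)=K_{-\mu}P$ and $\partial^R_{i,X}(F_i)=K_{-\mu}P''$. Putting each of the four terms into the normal form $K_\beta\cdot(\text{element of }U^+)$ by commuting the positive part past the torus, their torus characters are $K_{\alpha_i}$ (from both $K_{w_X(\alpha_i)}\partial^L_{i,X}(F_i)$ and $\partial_i^R(T_{w_X}(E_i))K_i$), while the remaining two are $K_{\alpha_i-2w_X(\alpha_i)}$ and $K_{-\alpha_i}$. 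As $w_X(\alpha_i)\neq0$ and $\Char k=0$, these three characters are pairwise distinct; since $\{K_\beta\}_{\beta\in Q}$ is a basis of $U^0$, the triangular decomposition $U\cong U^+\ot U^0\ot U^-$ lets me project onto the $K_{\alpha_i}$-isotypic component.

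Projecting annihilates the two spurious terms and identifies the $K_{\alpha_i}$-parts of $K_{w_X(\alpha_i)}\partial^L_{i,X}(F_i)=K_{\alpha_i}P$ and of $\partial_i^R(T_{w_X}(E_i))K_i$. The single commutation $\partial_i^R(T_{w_X}(E_i))K_i=q^{-(\alpha_i,\mu)}K_i\,\partial_i^R(T_{w_X}(E_i))$ supplies precisely the scalar $q^{(\alpha_i,\alpha_i-w_X(\alpha_i))}$, and cancelling $K_{\alpha_i}$ gives $P=q^{(\alpha_i,\alpha_i-w_X(\alpha_i))}\partial_i^R(T_{w_X}(E_i))$, whence $\partial^L_{i,X}(F_i)=K_{-\mu}P=q^{(\alpha_i,\alpha_i-w_X(\alpha_i))}K_{\alpha_i-w_X(\alpha_i)}\partial_i^R(T_{w_X}(E_i))$, which is \eqref{eq:partialLXFi}. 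I expect the main obstacle to be the bookkeeping in this last step: correctly locating each term inside $U^0U^+$, verifying the distinctness of the three torus characters so that the projection is legitimate, and tracking the one commutation scalar that becomes the prefactor. The parallel projection onto the $K_{-\alpha_i}$-component would yield a companion identity relating $\partial^R_{i,X}(F_i)$ to $\partial_i^L(T_{w_X}(E_i))$, which is not needed here.
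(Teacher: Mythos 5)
Your proof is correct and is essentially the paper's own argument: the paper likewise computes the commutator of $T_{w_X}(E_i)$ with $F_i$ in two ways, via \eqref{eq:FxxF} with $x=T_{w_X}(E_i)$ and via \eqref{eq:EyyE} with $y=T_{w_X}^{-1}(F_i)$ (only it applies $T_{w_X}^{-1}$ and compares in the untwisted picture before transporting back with $T_{w_X}$), and its phrase ``comparing the summands involving $K_i$'' is precisely your torus-character projection onto the $K_{\alpha_i}$-component, followed by the same single commutation that produces the scalar $q^{(\alpha_i,\alpha_i-w_X(\alpha_i))}$. One small inaccuracy that does not affect the argument: when $w_X(\alpha_i)=\alpha_i$ (e.g.\ when $i$ is not adjacent to $X$) the characters $K_{\alpha_i-2w_X(\alpha_i)}$ and $K_{-\alpha_i}$ coincide, so the three characters are not always pairwise distinct; but your projection only needs each of them to differ from $K_{\alpha_i}$, which always holds since $w_X(\alpha_i)\neq 0$.
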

\begin{proof}
  We apply $T_{w_X}^{-1}$ to Equation \eqref{eq:FxxF} for $x=T_{w_X}(E_i)$ and compare with Equation \eqref{eq:EyyE} to obtain
  \begin{align*}
    &\frac{T_{w_X}^{-1}(\partial_i^R(T_{w_X}(E_i))K_i) - T_{w_X}^{-1}(K_i^{-1}\partial^L_i(T_{w_X}(E_i)))}{q_i-q_i^{-1}}\\ 
    &\qquad\qquad=[E_i, T_{w_X}^{-1}(F_i)]   =\frac{K_i\partial^L_i(T_{w_X}^{-1}(F_i))-\partial_i^R(T_{w_X}^{-1}(F_i))K_i^{-1}}{q_i-q_i^{-1}}
    \cdot
  \end{align*}
  Comparing the summands involving $K_i$ we obtain
  \begin{align*}
     T_{w_X}^{-1}(\partial_i^R(T_{w_X}(E_i))K_i)=K_i\partial^L_i(T_{w_X}^{-1}(F_i))
  \end{align*}
  and hence
  \begin{align*}
    T_{w_X}\big(\partial_i^L(T_{w_X}^{-1}(F_i)) \big)&= K_{w_X(\alpha_i)}^{-1}\partial_i^R(T_{w_X}(E_i))K_i\\
    &=q^{(\alpha_i,\alpha_i-w_X(\alpha_i))} K_{\alpha_i-w_X(\alpha_i)}\partial_i^R(T_{w_X}(E_i)) 
  \end{align*}
  which confirms the statement of the lemma.
\end{proof}
For simplicity define 
\[
Z_i=\partial_i^R(T_{w_X}(E_i))\in U^+_{w_X(\alpha_i)-\alpha_i} \quad \mbox{for any} \quad i\in I\setminus X.
\]
As $w_X(\alpha_i)-\alpha_i\in Q_X^+$ we get $Z_i\in \cM_X^+$ and hence we have
\begin{align}\label{eq:ZiFj}
  [Z_i,F_j]=0 \qquad \mbox{for all $i,j\in I \setminus X$.}
\end{align}
Recall the non-symmetric quantum integer $(n)_p$ defined by $(n)_p=\sum_{j=0}^{n-1}p^j$ for any $n\in \N$, $p\in \field$.
\begin{lem}\label{lem:pLiXFn}
  For any $i\in I\setminus X$ and $n\in \N$ we have
  \begin{align}\label{eq:pLiXFn}
\partial_{i,X}^L(F_i^n)=q^{(\alpha_i,\alpha_i-w_X(\alpha_i))}(n)_{q_i^2}K_{\alpha_i-w_X(\alpha_i)} Z_i F_i^{n-1}.
  \end{align}
\end{lem}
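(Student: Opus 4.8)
The plan is to prove \eqref{eq:pLiXFn} by induction on $n$, using the twisted skew-derivation property \eqref{eq:partialLX} together with the base case provided by Lemma \ref{lem:partialLXFi}. For $n=1$, the claimed formula reads $\partial_{i,X}^L(F_i)=q^{(\alpha_i,\alpha_i-w_X(\alpha_i))}K_{\alpha_i-w_X(\alpha_i)}Z_i$, which is exactly the content of Lemma \ref{lem:partialLXFi} once we substitute the definition $Z_i=\partial_i^R(T_{w_X}(E_i))$, and note that $(1)_{q_i^2}=1$. So the base case requires no new work.

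For the inductive step I would write $F_i^n=F_i\cdot F_i^{n-1}$ and apply the twisted Leibniz rule \eqref{eq:partialLX}. Here the key bookkeeping is the weight. Since $F_i\in (G_X^+\ot\cR_X)_{-\mu}$ with $\mu=\alpha_i$ (viewing $F_i$ in its $-\alpha_i$ weight space under the grading by $Q^-$), the product rule \eqref{eq:partialLX} gives
\begin{align*}
  \partial_{i,X}^L(F_i^n)=\partial_{i,X}^L(F_i)\,F_i^{n-1}+q^{(w_X(\alpha_i),(n-1)\alpha_i)}F_i\,\partial_{i,X}^L(F_i^{n-1}).
\end{align*}
Substituting the base case into the first term and the inductive hypothesis into the second, both summands carry the overall factor $q^{(\alpha_i,\alpha_i-w_X(\alpha_i))}K_{\alpha_i-w_X(\alpha_i)}Z_i F_i^{n-1}$, provided one commutes $K_{\alpha_i-w_X(\alpha_i)}Z_i$ past the leading $F_i$ in the second term and uses \eqref{eq:ZiFj} to move $Z_i$ freely through powers of $F_i$. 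The first term contributes coefficient $1$ and the second contributes $q^{(n-1)(w_X(\alpha_i),\alpha_i)}$ times the $K$-commutation factor; the point is that these two scalars should sum to $(n)_{q_i^2}=1+q_i^2+\dots+q_i^{2(n-1)}$ by comparison with $(n-1)_{q_i^2}$ from the hypothesis.

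The step I expect to demand the most care is tracking the scalar prefactors from the two sources: the explicit $q$-power $q^{(w_X(\alpha_i),(n-1)\alpha_i)}$ appearing in \eqref{eq:partialLX}, and the additional power of $q$ generated when commuting the group-like element $K_{\alpha_i-w_X(\alpha_i)}$ past $F_i$. Writing $(\alpha_i,\alpha_i)=2d_i$ and $q_i^2=q^{2d_i}$, one must verify that after collecting everything the recursion $1\cdot(1)+q^{2d_i}\cdot(n-1)_{q_i^2}$-type identity reproduces $(n)_{q_i^2}$ exactly; here I would use that $K_\gamma F_i=q^{-(\gamma,\alpha_i)}F_i K_\gamma$ and that $(\gamma,\alpha_i)=(\alpha_i-w_X(\alpha_i),\alpha_i)$, so the combined exponent of $q$ in the second term matches $2d_i=(\alpha_i,\alpha_i)$, giving the factor $q_i^2$ that multiplies $(n-1)_{q_i^2}$. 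The identity $(n)_{q_i^2}=1+q_i^2(n-1)_{q_i^2}$ then closes the induction. This is a routine but delicate weight computation; no conceptual obstacle arises once the weight of $F_i^{n-1}$ in \eqref{eq:partialLX} and the $K$-commutation are handled consistently.
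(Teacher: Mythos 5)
Your overall strategy coincides with the paper's proof: induction on $n$, base case from Lemma \ref{lem:partialLXFi}, the twisted Leibniz rule \eqref{eq:partialLX} applied to the factorization $F_i\cdot F_i^{n-1}$, the commutation \eqref{eq:ZiFj}, and the identity $(n)_{q_i^2}=1+q_i^2\,(n-1)_{q_i^2}$. However, there is a genuine error in how you apply \eqref{eq:partialLX}, and it breaks exactly the scalar bookkeeping that you defer to the end of your argument.

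In \eqref{eq:partialLX} the power of $q$ multiplying $f\,\partial_{i,X}^L(g)$ is $q^{(w_X(\alpha_i),\mu)}$, where $-\mu$ is the weight of the \emph{first} factor $f$; this is the same convention as in \eqref{eq:partialL}, and it is forced, e.g.\ by expanding $E_i(fg)-(fg)E_i$ via \eqref{eq:EyyE} and moving $K_i$ across $f$, or by conjugating \eqref{eq:partialL} with $T_{w_X}$. Taking $f=F_i$ and $g=F_i^{n-1}$ therefore gives
\begin{align*}
\partial_{i,X}^L(F_i^n)=\partial_{i,X}^L(F_i)\,F_i^{n-1}+q^{(w_X(\alpha_i),\alpha_i)}\,F_i\,\partial_{i,X}^L(F_i^{n-1}),
\end{align*}
whereas you wrote $q^{(w_X(\alpha_i),(n-1)\alpha_i)}$, i.e.\ the weight of the \emph{second} factor (the convention of \eqref{eq:partialRX}, not of \eqref{eq:partialLX}) --- despite having correctly identified $\mu=\alpha_i$ one line earlier. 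Now follow the scalars in the second term: the inductive hypothesis contributes $q^{(\alpha_i,\alpha_i-w_X(\alpha_i))}(n-1)_{q_i^2}$, and commuting $K_{\alpha_i-w_X(\alpha_i)}$ to the left of $F_i$ contributes $q^{(\alpha_i-w_X(\alpha_i),\alpha_i)}$. With the correct Leibniz factor these combine to $q^{(\alpha_i,\alpha_i)}=q_i^2$ times the prefactor $q^{(\alpha_i,\alpha_i-w_X(\alpha_i))}$, so $1+q_i^2(n-1)_{q_i^2}=(n)_{q_i^2}$ closes the induction, exactly as in the paper. With your factor the combined exponent is $(n-2)(\alpha_i,w_X(\alpha_i))+(\alpha_i,\alpha_i)$, so your final claim that ``the combined exponent matches $2d_i$'' fails whenever $n\neq 2$ and $(\alpha_i,w_X(\alpha_i))\neq 0$, which is the generic situation. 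Already for $X=\emptyset$ (where $w_X=\id$, $Z_i=1$, $K_{\alpha_i-w_X(\alpha_i)}=1$, and the lemma reduces to $\partial_i^L(F_i^n)=(n)_{q_i^2}F_i^{n-1}$) your recursion yields $1+q_i^{2(n-1)}(n-1)_{q_i^2}$ instead of $(n)_{q_i^2}$: for $n=3$ this is $1+q_i^4+q_i^6$ rather than $1+q_i^2+q_i^4$. The fix is local --- use the weight of the first factor in \eqref{eq:partialLX} --- but as written the inductive step fails.
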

\begin{proof}
  For $n=1$ the relation \eqref{eq:pLiXFn} holds by Lemma \ref{lem:partialLXFi}. We proceed by induction on $n$. For $n>1$ we calculate
  \begin{align*}
    &\partial_{i,X}^L(F_i^n)
    \stackrel{\eqref{eq:partialLX}}{=}\partial_{i,X}^L(F_i)F_i^{n-1} + q^{(w_X(\alpha_i),\alpha_i)} F_i\partial_{i,X}^L(F_i^{n-1})\\
    &\stackrel{\eqref{eq:partialLXFi}}{=}q^{(\alpha_i,\alpha_i-w_X(\alpha_i))} K_{\alpha_i-w_X(\alpha_i)}Z_i F_i^{n-1} +  F_i q^{(\alpha_i,\alpha_i)} (n-1)_{q_i^2} K_{\alpha_i-w_X(\alpha_i)} Z_i F_i^{n-2}\\
    &\stackrel{\phantom{\eqref{eq:partialLXFi}}}{=} q^{(\alpha_i,\alpha_i-w_X(\alpha_i))} (n)_{q_i^2} K_{\alpha_i-w_X(\alpha_i)} Z_i F_i^{n-1}
  \end{align*}
  where we also used the fact that $Z_i$ and $F_i$ commute by \eqref{eq:ZiFj}.
\end{proof}  
We will need similar relations for $\ad_r(F_j)(F_i^n)$ which lies in $\cR_X$ for $j\in X$. 
\begin{lem}\label{lem:pLiXadFu}
  For any $u\in \cR_X$, $j\in X$ and $i\in I\setminus X$ we have
  \begin{align} \label{eq:partial-adrFj}
    \partial_{i,X}^L(\ad_r(F_j)(u)) =   - \frac{1}{q_j-q_j^{-1}} \partial_{\tau(j),X}^R \circ \partial_{i,X}^L(u).
  \end{align}  
\end{lem}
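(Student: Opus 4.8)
The plan is to relate the twisted skew-derivation $\partial^L_{i,X}$ acting on $\ad_r(F_j)(u)$ to a composition of twisted derivations via the defining relation \eqref{eq:FxxF} for the ordinary skew-derivations, transported through the Lusztig automorphism $T_{w_X}$. Since $\partial^L_{i,X} = T_{w_X}\circ \partial^L_i \circ T_{w_X}^{-1}$, I would first rewrite $\ad_r(F_j)(u)$ in a form amenable to $T_{w_X}^{-1}$. Recall that for $u\in\cR_X$ and $j\in X$ one has $\ad_r(F_j)(u) = -F_j K_j u K_j^{-1} + u F_j$, which is a commutator-type expression. Applying $T_{w_X}^{-1}$ and using $w_X(\alpha_j) = -\alpha_{\tau(j)}$ for $j\in X$ (so $T_{w_X}^{-1}(F_j)$ relates to $E_{\tau(j)}$ up to a $K$-factor, as already exploited in the proof of Corollary \ref{cor:RTwX}), the adjoint action $\ad_r(F_j)$ becomes, after conjugation by $T_{w_X}^{-1}$, a commutator with $E_{\tau(j)}$. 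Concretely, the computation in the proof of Corollary \ref{cor:RTwX} shows $T_{w_X}^{-1}(\ad_r(F_j)(u)) = \frac{1}{q_j-q_j^{-1}}\partial^R_{\tau(j)}(T_{w_X}^{-1}(u))$ up to the placement of $K$-factors, which already expresses the adjoint action as an ordinary $\partial^R_{\tau(j)}$.

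First I would carry out this reduction carefully: using that $T_{w_X}^{-1}(u)\in U^-$ and the equivalence \eqref{eq:T_i-equivalence}, identify $T_{w_X}^{-1}(\ad_r(F_j)(u))$ with $\frac{1}{q_j-q_j^{-1}}\partial^R_{\tau(j)}(T_{w_X}^{-1}(u))$ (modulo the appropriate $K_{\tau(j)}$ factor that must be tracked). Then I would apply $\partial^L_i$ to this and conjugate back by $T_{w_X}$, which by definition produces $\partial^L_{i,X}$ on the left and $\partial^R_{\tau(j),X}$ on the inside. The crux is therefore to establish a commutation relation between the two ordinary skew-derivations $\partial^L_i$ and $\partial^R_{\tau(j)}$: one needs that these skew-derivations, corresponding to \emph{distinct} nodes $i\in I\setminus X$ and $\tau(j)\in X$ (so $i\neq\tau(j)$), essentially commute. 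This is the key structural input, and it follows from the fact that $\partial^L_i$ and $\partial^R_\ell$ for $i\neq\ell$ commute as operators on $U^-$ (or $U^+$), a standard property of the Lusztig--Kashiwara derivations derived from their definitions \eqref{eq:EyyE}, \eqref{eq:FxxF} and the commutation $[E_i,F_\ell]=0$ for $i\neq\ell$.

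The hard part will be bookkeeping the various $K$-factors and weight-dependent $q$-powers that appear when commuting $T_{w_X}^{-1}(F_j)$ past $u$ and when applying $\partial^L_i$ afterwards, and verifying that they collapse so that the final identity \eqref{eq:partial-adrFj} carries no extra scalar beyond $-\frac{1}{q_j-q_j^{-1}}$. To control these, I would systematically use the skew-derivation rules \eqref{eq:partialLX}, \eqref{eq:partialRX} together with the weight of $\ad_r(F_j)(u)$ relative to that of $u$, namely a shift by $-\alpha_j$, which translates under $T_{w_X}$ into a shift by $\alpha_{\tau(j)}$; the compatibility of these weight shifts with the $q^{(w_X(\alpha_i),\cdot)}$ exponents in \eqref{eq:partialLX} is exactly what forces the clean cancellation. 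I expect that once the commutation $\partial^L_i\partial^R_{\tau(j)} = \partial^R_{\tau(j)}\partial^L_i$ on $U^-$ is in hand and the single overall factor $-\frac{1}{q_j-q_j^{-1}}$ is correctly extracted from the adjoint-action-to-derivation conversion, the identity \eqref{eq:partial-adrFj} follows directly, with all remaining $K$-factors and $q$-powers cancelling by the weight compatibility.
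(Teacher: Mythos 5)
Your proposal follows essentially the same route as the paper's proof: transport $\ad_r(F_j)(u)$ through $T_{w_X}^{-1}$ to identify it with $-\tfrac{1}{q_j-q_j^{-1}}\,\partial^R_{\tau(j)}(T_{w_X}^{-1}(u))$ (the $K_{\tau(j)}$-factors cancel exactly, because $\partial^L_{\tau(j)}(T_{w_X}^{-1}(u))=0$ by \eqref{eq:T_i-equivalence}), then apply $\partial^L_i$, commute it past $\partial^R_{\tau(j)}$, and conjugate back by $T_{w_X}$ to produce the twisted derivations. The only minor points are that the commutation $\partial^L_i\circ\partial^R_{\tau(j)}=\partial^R_{\tau(j)}\circ\partial^L_i$ on $U^-$ holds for all pairs of indices and is a standard coproduct/coassociativity fact rather than a consequence of $[E_i,F_\ell]=0$ (the paper invokes it silently), and that the overall constant is indeed $-\tfrac{1}{q_j-q_j^{-1}}$, as you anticipate.
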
  
\begin{proof}
  For $u\in (\cR_X)_{-\mu}$ and $j\in X$ we calculate
  \begin{align}
    T_{w_X}^{-1}(\ad_r(F_j)(u))& \stackrel{\phantom{\eqref{eq:T_i-equivalence}}}{=}T_{w_X}^{-1}\big(u F_j - q^{-(\alpha_j,\mu)}F_j u\big)\nonumber\\
      &\stackrel{\phantom{\eqref{eq:T_i-equivalence}}}{=} - T_{w_X}^{-1}(u) E_{\tau(j)} K_{\tau(j)} + q^{-(\alpha_j,\mu)} E_{\tau(j)} K_{\tau(j)} T_{w_X}^{-1}(u)\nonumber\\
        &\stackrel{\phantom{\eqref{eq:T_i-equivalence}}}{=} [E_{\tau(j)}, T_{w_X}^{-1}(u)] K_{\tau(j)}\nonumber\\
        &\stackrel{\eqref{eq:T_i-equivalence}}{=} - \frac{1}{q_j-q_j^{-1}} \partial^R_{\tau(j)}\circ T_{w_X}^{-1}(u).\label{eq:partialRtauj}
  \end{align}
  Hence we obtain
  \begin{align*}
    \partial_{i,X}^L(\ad_r(F_j)(u)) &=  - \frac{1}{q_j-q_j^{-1}} T_{w_X}\circ \partial^L_i \circ \partial^R_{\tau(j)}\circ T_{w_X}^{-1}(u)\\
    &=  - \frac{1}{q_j-q_j^{-1}} T_{w_X}\circ \partial^R_{\tau(j)} \circ \partial^L_i \circ T_{w_X}^{-1}(u)\\
    &=  - \frac{1}{q_j-q_j^{-1}} \partial_{\tau(j),X}^R \circ \partial_{i,X}^L(u)
  \end{align*}
  which is the desired formula.
\end{proof}  
By Equation \eqref{eq:partial-adrFj} we will need to understand the action of $\partial_{j,X}^R$ on $G_X^+\ot \cR_X$. Equation \eqref{eq:partialRtauj} implies that
  \begin{align}\label{eq:partialRj-ad}
    \partial_{j,X}^R(u)=-(q_j-q_j^{-1}) \ad_r(F_{\tau(j)})(u) \qquad \mbox{for all $u\in \cR_X$, $j\in X$.}
  \end{align}
Moreover, by Lemma \ref{lem:partialLXFi} we have
  \begin{align}\label{eq:TwXmKZ}
    T_{w_X}^{-1}(K_{\alpha_i-w_X(\alpha_i)}Z_i) &= q^{(\alpha_i, w_X(\alpha_i)-\alpha_i)} \partial_i^L\circ T_{w_X}^{-1}(F_i)
  \end{align}
  and hence
  \begin{align*}
    \partial_{\tau(j),X}^R(K_{\alpha_i-w_X(\alpha_i)}Z_i)=
    q^{(\alpha_i, w_X(\alpha_i)-\alpha_i)} \partial_{\tau(j),X}^R \circ \partial_{i,X}^L (F_i)
  \end{align*}
  which implies that $\partial_{\tau(j),X}^R(K_{\alpha_i-w_X(\alpha_i)}Z_i)\in  (G_X^+)_{w_X(\alpha_i)-\alpha_i-\alpha_j}$.
  \begin{lem}\label{lem:pLiXadFFn}
    For any $i\in I\setminus X$, $j\in X$ and $1\le n\in \N$ we have
  \begin{align}
    \partial^L_{i,X}\big(\ad_r(F_j)&(F_i^n)\big)\nonumber\\
    =& \frac{q^{(\alpha_i+\alpha_j,\alpha_i+\alpha_j-w_X(\alpha_i))-n(\alpha_i,\alpha_j)}}{q_j-q_j^{-1}} (n)_{q_i^2} K_{\alpha_i+\alpha_j-w_X(\alpha_i)} \partial^R_j(Z_i) F_i^{n-1}\label{eq:partialLiX-adrFjFin}\\
    &+ q^{(\alpha_i,\alpha_i-w_X(\alpha_i))}(n)_{q_i^2} K_{\alpha_i-w_X(\alpha_i)}Z_i \ad_r(F_j)(F_i^{n-1}) \nonumber
  \end{align}
  \end{lem}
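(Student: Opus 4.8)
The plan is to reduce the left-hand side to two skew-derivation evaluations and then reassemble the $q$-powers. First I would apply Lemma \ref{lem:pLiXadFu} with $u=F_i^n$, giving
\[
\partial^L_{i,X}\big(\ad_r(F_j)(F_i^n)\big) = -\frac{1}{q_j-q_j^{-1}}\,\partial^R_{\tau(j),X}\big(\partial^L_{i,X}(F_i^n)\big),
\]
and then substitute the explicit value of $\partial^L_{i,X}(F_i^n)$ from Lemma \ref{lem:pLiXFn}. This turns the problem into evaluating $\partial^R_{\tau(j),X}$ on the product $K_{\alpha_i-w_X(\alpha_i)}Z_i\cdot F_i^{n-1}$, where $K_{\alpha_i-w_X(\alpha_i)}Z_i\in G_X^+$ and $F_i^{n-1}\in\cR_X$.

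Next I would apply the twisted skew-derivation product rule \eqref{eq:partialRX}. Since $F_i^{n-1}$ has weight $-(n-1)\alpha_i$ and $w_X(\alpha_{\tau(j)})=-\alpha_{\tau(\tau(j))}=-\alpha_j$ (using that $\tau$ is involutive and $w_X(\alpha_j)=-\alpha_{\tau(j)}$ for $j\in X$), the rule produces a prefactor $q^{-(n-1)(\alpha_i,\alpha_j)}$ on the term where the derivation hits the first factor and splits the expression into
\[
q^{-(n-1)(\alpha_i,\alpha_j)}\,\partial^R_{\tau(j),X}(K_{\alpha_i-w_X(\alpha_i)}Z_i)\,F_i^{n-1} + K_{\alpha_i-w_X(\alpha_i)}Z_i\,\partial^R_{\tau(j),X}(F_i^{n-1}).
\]
For the second summand I would invoke \eqref{eq:partialRj-ad} with $\tau(j)$ in place of $j$ (legitimate because $\tau(\tau(j))=j$ and $q_{\tau(j)}=q_j$), obtaining $\partial^R_{\tau(j),X}(F_i^{n-1})=-(q_j-q_j^{-1})\ad_r(F_j)(F_i^{n-1})$; the factor $-(q_j-q_j^{-1})$ cancels the $(q_j-q_j^{-1})^{-1}$ from the overall prefactor and reproduces exactly the second term of \eqref{eq:partialLiX-adrFjFin}.

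The heart of the proof is the first summand, i.e.\ the explicit evaluation of $\partial^R_{\tau(j),X}(K_{\alpha_i-w_X(\alpha_i)}Z_i)$. The discussion preceding the lemma already rewrites this via \eqref{eq:TwXmKZ} as $q^{(\alpha_i,w_X(\alpha_i)-\alpha_i)}\partial^R_{\tau(j),X}\partial^L_{i,X}(F_i)$, but this route is circular: commuting the two skew-derivations and applying \eqref{eq:partialRj-ad} returns precisely the $n=1$ instance of the lemma. I would therefore instead establish the transfer identity
\[
\partial^R_{\tau(j),X}(K_{\alpha_i-w_X(\alpha_i)}Z_i)=-\,q^{(\alpha_i,\alpha_j)+(\alpha_j,\alpha_j)-(\alpha_j,w_X(\alpha_i))}\,K_{\alpha_i+\alpha_j-w_X(\alpha_i)}\,\partial^R_j(Z_i),
\]
which converts the twisted $R$-derivation $\partial^R_{\tau(j),X}$ acting on the Levi part $G_X^+=T_{w_X}(\cM_X^-)$ into the ordinary $\partial^R_j$ on $\cM_X^+$. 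Here the $K$-factor is forced by the common weight $w_X(\alpha_i)-\alpha_i-\alpha_j$ of both sides (recall $Z_i\in U^+_{w_X(\alpha_i)-\alpha_i}$, so $\partial^R_j(Z_i)\in U^+_{w_X(\alpha_i)-\alpha_i-\alpha_j}$), and the scalar is obtained by a direct computation inside the finite-type subalgebra $\cM_X$, using \eqref{eq:EyyE} together with the definition $Z_i=\partial^R_i(T_{w_X}(E_i))$ and careful tracking of the weight-dependent $q$-powers.

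I expect this transfer identity to be the main obstacle, precisely because the clean adjoint-action characterization \eqref{eq:partialRj-ad} holds only on $\cR_X$ and fails on $G_X^+$ (its derivation relied on $\partial^L_{\tau(j)}(T_{w_X}^{-1}(u))=0$, which needs $u\in\cR_X$); consequently $K_{\alpha_i-w_X(\alpha_i)}Z_i$ must be handled by an honest computation in $\cM_X$ rather than by a formal reduction. Once the identity is in hand, feeding it into the first summand and collecting the accumulated factors $q^{(\alpha_i,\alpha_i-w_X(\alpha_i))}$, $q^{-(n-1)(\alpha_i,\alpha_j)}$ and $-q^{(\alpha_i,\alpha_j)+(\alpha_j,\alpha_j)-(\alpha_j,w_X(\alpha_i))}$ simplifies the exponent to $(\alpha_i+\alpha_j,\alpha_i+\alpha_j-w_X(\alpha_i))-n(\alpha_i,\alpha_j)$, reproducing the first term of \eqref{eq:partialLiX-adrFjFin} and completing the proof.
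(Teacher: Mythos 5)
Your proposal is correct and follows essentially the same route as the paper: the reduction via Lemmas \ref{lem:pLiXadFu} and \ref{lem:pLiXFn}, the product rule \eqref{eq:partialRX} (with the prefactor $q^{-(n-1)(\alpha_i,\alpha_j)}$ coming from $w_X(\alpha_{\tau(j)})=-\alpha_j$), the identity \eqref{eq:partialRj-ad} for the second summand, and your transfer identity, which is exactly the paper's key intermediate formula $\partial_{\tau(j),X}^R(K_{\alpha_i-w_X(\alpha_i)} Z_i)=-q^{-(\alpha_j,w_X(\alpha_i)-\alpha_i-\alpha_j)} K_{\alpha_i-w_X(\alpha_i)+\alpha_j}\partial^R_j(Z_i)$, with matching scalar. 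The paper proves that identity by precisely the direct computation you sketch: it applies \eqref{eq:EyyE} to $[E_{\tau(j)},T_{w_X}^{-1}(K_{\alpha_i-w_X(\alpha_i)}Z_i)]$, applies $T_{w_X}$ so that the left-hand side becomes $-[F_jK_j,K_{\alpha_i-w_X(\alpha_i)}Z_i]$ (evaluated via \eqref{eq:FxxF}), and compares components of weight $K_{\alpha_i+2\alpha_j-w_X(\alpha_i)}$, using \eqref{eq:TwXmKZ} only for the (non-circular) containment $\partial^R_{\tau(j),X}(K_{\alpha_i-w_X(\alpha_i)}Z_i)\in(G_X^+)_{w_X(\alpha_i)-\alpha_i-\alpha_j}$ --- exactly the weight argument you anticipated.
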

  \begin{proof}
    By equation \eqref{eq:EyyE} we have
    \begin{align*}
      [E_{\tau(j)}&,T_{w_X}^{-1}(K_{\alpha_i-w_X(\alpha_i)}Z_i)]\\ &=\frac{K_{\tau(j)}\partial^L_{\tau(j)}(T_{w_X}^{-1}(K_{\alpha_i-w_X(\alpha_i)}Z_i))-\partial^R_{\tau(j)}(T_{w_X}^{-1}(K_{\alpha_i-w_X(\alpha_i)}Z_i))K_{\tau(j)}^{-1}}{q_j-q_j^{-1}}.
    \end{align*}
    Applying $T_{w_X}$ to this equation we obtain
    \begin{align*} &\frac{K_j^{-1}\partial_{\tau(j),X}^L(K_{\alpha_i-w_X(\alpha_i)}Z_i)-\partial_{\tau(j),X}^R(K_{\alpha_i-w_X(\alpha_i)}Z_i)K_{j}}{q_j-q_j^{-1}}\\
     &=-[F_j K_j, K_{\alpha_i-w_X(\alpha_i)}Z_i]\\
      &= - K_{\alpha_i-w_X(\alpha_i)}[F_j,Z_i]K_j\\
      &= - K_{\alpha_i-w_X(\alpha_i)}\, \frac{K_j^{-1}\partial_j^L(Z_i) - \partial_j^R(Z_i)K_j}{q_j-q_j^{-1}}\, K_j 
    \end{align*}
    Using the fact that $\partial_{\tau(j),X}^R(K_{\alpha_i-w_X(\alpha_i)}Z_i) \in (G_X^+)_{w_X(\alpha_i)-\alpha_i-\alpha_j}$ noted above, and collecting terms in $U^+K_{\alpha_i+2\alpha_j-w_X(\alpha_i)}$ we obtain
    \begin{align*}
      \partial_{\tau(j),X}^R(K_{\alpha_i-w_X(\alpha_i)} Z_i)
      &=-q^{-(\alpha_j,w_X(\alpha_i)-\alpha_i-\alpha_j)} K_{\alpha_i-w_X(\alpha_i)+\alpha_j}\partial^R_j(Z_i).
    \end{align*}
We can use the above formula and the skew-derivation property \eqref{eq:partialRX} to calculate    
  \begin{align*}
    \partial^L_{i,X}\big(&\ad_r(F_j)(F_i^n)\big)
    \stackrel{\eqref{eq:partial-adrFj}}{=}  \frac{-1}{q_j-q_j^{-1}} \partial_{\tau(j),X}^R \circ \partial_{i,X}^L(F_i^n)\\
    \stackrel{\eqref{eq:pLiXFn}}{=}& \frac{-1}{q_j-q_j^{-1}} \partial_{\tau(j),X}^R
    \Big(q^{(\alpha_i,\alpha_i-w_X(\alpha_i))}(n)_{q_i^2}K_{\alpha_i-w_X(\alpha_i)} Z_i F_i^{n-1}  \Big)\\
     \stackrel{\phantom{\eqref{eq:partial-adrFj}}}{=}&   \frac{q^{(\alpha_i+\alpha_j,\alpha_i+\alpha_j-w_X(\alpha_i))-n(\alpha_i,\alpha_j)}}{q_j-q_j^{-1}} (n)_{q_i^2} K_{\alpha_i+\alpha_j-w_X(\alpha_i)} \partial_j^R(Z_i) F_i^{n-1}\\
    &+ q^{(\alpha_i,\alpha_i-w_X(\alpha_i))}(n)_{q_i^2} K_{\alpha_i-w_X(\alpha_i)}Z_i \ad_r(F_j)(F_i^{n-1})
  \end{align*}
  where we also used \eqref{eq:partialRj-ad} in the last step.
\end{proof}
\section{Continuous $q$-Hermite polynomials and deformed Chebyshev polynomials of the second kind}
\label{sec:orth-polyn}
We recall the family of bivariate continuous $q$-Hermite polynomials introduced in \cite{a-CKY20}. Here we will need rescaled versions of these polynomials with coefficients in the algebra $\cM_X^+$. To express the defining relations of $\cB_\bc$ we will consider quantum Serre combinations of bivariate $q$-Hermite polynomials which we hence discuss in some detail. We also introduce a new class of deformed Chebyshev polynomials of the second kind in preparation for the quantum Serre relations in the subtle case $i\in I\setminus X$, $j\in X$ discussed in Section \ref{sec:caseII}.
\subsection{Bivariate continuous $q$-Hermite polynomials}\label{sec:biv-q-Herm}
Recall the univariate continuous $q$-Hermite polynomials $H_m(x;q)$ defined recursively for all $m\in \Z$ by $H_{-m}(x;q)=0$ for $m<0$, $H_{0}(x;q)=1$, and
\begin{align}\label{eq:Hm-recursion}
  2x H_{m}(x;q) = H_{m+1}(x;q) + (1-q^m) H_{m-1}(x;q),
\end{align}  
see \cite[14.26]{b-KLS10}. The following family of bivariate continuous $q$-Hermite polynomials $H_{m,n}(x,y;q,r)$ was introduced in \cite{a-CKY20}.
\begin{defi}
  Let $r\in \field$. The bivariate continuous $q$-Hermite polynomials $H_{m,n}(x,y;q,r)$ are defined for all $m,n\in \Z$ by $H_{0,n}(x,y;q,r)=H_n(y;q)$ and the recursion
  \begin{align}\label{eq:Hmn-recursion}
    2x H_{m,n}(x,y;q,r) = H_{m+1,n}(x,y;q,r) &+ (1-q^m)H_{m-1,n}(x,y;q,r)\\
                                          & + q^m(1-q^n)r H_{m,n-1}(x,y;q,r) \nonumber
  \end{align}
where we set $H_{-m,n}(x,y;q,r) =0$ for $m >0, n \in \Z$ and $H_{m,-n}(x,y;q,r)=0$  $m \in \Z, n >0$. 
\end{defi}
It was shown in \cite{a-CKY20} that the bivariate continuous $q$-Hermite polynomials form a family of orthogonal polynomials with many desirable properties. In particular, they satisfy the symmetry condition
\begin{align*}
  H_{m,n}(x,y;q,r) = H_{n,m}(y,x;q,r) \qquad \mbox{for all $m,n\in \Z$}.
\end{align*}
Moreover, by \cite[(3.4)]{a-CKY20} the bivariate $q$-Hermite polynomials $H_{m,n}(x,y;q,r)$ can be expressed in terms of the univariate continuous $q$-Hermite polynomials by the formula
\begin{align}\label{eq:bi-in-uni}
  H_{m,n}(x,y;q,r)=\sum_{k=0}^{\min(m,n)}\frac{(-1)^kq^{{k\choose 2}}(q;q)_m (q;q)_n r^k}{(q;q)_{m-k}(q;q)_{n-k}(q;q)_k} H_{m-k}(x;q) H_{n-k}(y;q) 
\end{align}
where $(t;q)_k=\prod_{j=0}^{k-1}(1-tq^j)$ denotes the $q$-Pochhammer symbol. Note that the coefficient inside the sum \eqref{eq:bi-in-uni} is a polynomial in $q$.

In the following we will encounter rescaled versions of the continuous $q$-Hermite polynomials with coefficients in an associative algebra $\cM$ over $\field$. Let $\cM[x]=\cM\ot_\field \field[x]$ and $\cM[x,y]=\cM\ot_\field \field[x,y]$ be the algebras of polynomials in one or two variables with coefficients in $\cM$.
\begin{lem}\label{lem:wmn}
  Let $b^2\in\cM\setminus \{0\}$ and let $b$ be a formal square root of $b^2$. \begin{enumerate}
    \item For all $m,n\in\N$ the expressions
  \begin{align}\label{eq:wm-wmn-def}
    w_m(x)=(b/2)^m H_m(\frac{x}{b};q^2) \quad \mbox{and} \quad
    w_{m,n}(x,y)= (b/2)^{m+n} H_{m,n}(\frac{x}{b},\frac{y}{b};q^2,r)
  \end{align}
  define polynomials in $\cM[x]$ and $\cM[x,y]$, respectively.
\item The polynomials $w_m(x)$ for $m\in\N$ are uniquely determined by $w_0(x)=1$, $w_1(x)=x$ and the recursion
  \begin{align}\label{eq:wm-recursion}
     x w_m(x) = w_{m+1}(x) +\frac{b^2}{4} (1-q^{2m})w_{m-1}(x).
  \end{align}  
\item The polynomials $w_{m,n}(x)$ are uniquely determined by $w_{0,n}(x,y)=w_n(y)$ for all $n\in \N$ and by the recursion
  \begin{align}\label{eq:wmn-recursion}
    x w_{m,n}(x,y)= w_{m+1,n}(x,y) &+\frac{b^2}{4} (1-q^{2m})w_{m-1,n}(x,y)\\
                     &+ \frac{b^2}{4} q^{2m}(1-q^{2n})r w_{m,n-1}(x,y) \nonumber
  \end{align}
  where we set $w_{-1,n}(x,y)=w_{m,-1}(x,y)=0$. 
\item For all $m,n\in \N$ the relation
  \begin{align}\label{eq:wmn}
    w_{m,n}(x,y)=\sum_{k=0}^{\min(m,n)}r^k q^{k(m+n)}\eta(k) \begin{bmatrix}m\\k\end{bmatrix}_q \begin{bmatrix}n\\k \end{bmatrix}_q[k]^!_q  w_{m-k}(x) w_{n-k}(y) 
    \end{align}
  holds with $\eta(k)=(q-q^{-1})^k q^{-k(k+1)/2} (b^2/4)^k$.
  \end{enumerate}
\end{lem}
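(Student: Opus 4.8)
The plan is to prove the four parts of Lemma~\ref{lem:wmn} in order, since each part builds on the previous ones. The central device is the rescaling in \eqref{eq:wm-wmn-def}, which introduces a formal square root $b$ of $b^2 \in \cM$. First I would address the well-definedness issue in part~(1), which is the conceptual heart of the lemma: although $b$ itself need not lie in $\cM$, the claim is that $w_m(x)$ and $w_{m,n}(x,y)$ nonetheless have coefficients in $\cM[x]$ and $\cM[x,y]$. The key observation is that $H_m(\cdot\,;q^2)$ has definite parity --- it is an even (resp.\ odd) polynomial in its argument when $m$ is even (resp.\ odd) --- so that $(b/2)^m H_m(x/b;q^2)$ only ever contains factors $b^{m}\cdot b^{-(m-2\ell)} = b^{2\ell}=(b^2)^\ell$. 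This eliminates all odd powers of $b$ and leaves genuine elements of $\cM$. The bivariate case is analogous once one checks via \eqref{eq:bi-in-uni} that $H_{m,n}$ has total parity $m+n$ in the pair $(x,y)$, so the prefactor $(b/2)^{m+n}$ again pairs up all half-integer powers; I would verify this parity either directly from the recursion \eqref{eq:Hmn-recursion} by induction or read it off from \eqref{eq:bi-in-uni}.

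Next, parts~(2) and~(3) are obtained by substituting the rescaling \eqref{eq:wm-wmn-def} into the standard recursions \eqref{eq:Hm-recursion} and \eqref{eq:Hmn-recursion} (with $q$ replaced by $q^2$) and tracking the powers of $b/2$. For part~(2), multiplying \eqref{eq:Hm-recursion} through by $(b/2)^{m+1}$ and using $x/b$ in place of $x$, the term $2\cdot(x/b)\cdot H_m$ becomes $x\,w_m(x)\cdot(2/b)\cdot(b/2)$, the term $H_{m+1}$ becomes $w_{m+1}(x)$, and the term $(1-q^{2m})H_{m-1}$ picks up a factor $(b/2)^2 = b^2/4$, yielding exactly \eqref{eq:wm-recursion}. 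The initial conditions $w_0=1$, $w_1=x$ follow from $H_0=1$, $H_1(x;q^2)=2x$. The same bookkeeping applied to \eqref{eq:Hmn-recursion} produces \eqref{eq:wmn-recursion}, the only extra point being that the factor $r$ is carried along unchanged and the $q^m$ in \eqref{eq:Hmn-recursion} becomes $q^{2m}$ after the $q\mapsto q^2$ substitution. Uniqueness in both parts is immediate since the recursions express the top-degree polynomial $w_{m+1}$ (resp.\ $w_{m+1,n}$) in terms of lower ones.

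Finally, part~(4) follows by rescaling the explicit connection formula \eqref{eq:bi-in-uni}. Substituting $x/b, y/b$ and $q^2$ for $q$ into \eqref{eq:bi-in-uni}, then multiplying by $(b/2)^{m+n}$, I would group the $q$-Pochhammer ratio into $q$-binomial coefficients and a $q$-factorial, convert the continuous-$q$-Hermite factors into $w_{m-k}, w_{n-k}$ by \eqref{eq:wm-wmn-def}, and collect all surviving powers of $b$. The power of $b$ that remains after forming $w_{m-k}(x)w_{n-k}(y)$ from $H_{m-k}H_{n-k}$ is $(b/2)^{m+n}\cdot(2/b)^{(m-k)+(n-k)} = (b^2/4)^{k}\cdot 2^{\,?}$, which is exactly the source of the $(b^2/4)^k$ appearing in $\eta(k)$; the remaining $q$-powers $q^{\binom{k}{2}}$ from \eqref{eq:bi-in-uni} together with the rescaling contribute the factor $q^{-k(k+1)/2}$ and the sign, while the $r^k$ and $q^{k(m+n)}$ prefactors track directly. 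The main obstacle throughout is purely bookkeeping: carefully matching the half-integer powers of $b$ and the various powers of $q$ and $q-q^{-1}$ so that the claimed $\eta(k)$ emerges exactly, and confirming the parity claim in part~(1) that makes the whole rescaling legitimate over $\cM$. No deep structural input is needed beyond the known properties \eqref{eq:Hm-recursion}, \eqref{eq:Hmn-recursion}, \eqref{eq:bi-in-uni} of the scalar polynomials.
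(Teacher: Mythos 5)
Your proposal is correct and follows essentially the same route as the paper: parity of $H_m$ and $H_{m,n}$ (established from the recursions) for part (1), direct substitution $x\mapsto x/b$, $q\mapsto q^2$ into \eqref{eq:Hm-recursion}, \eqref{eq:Hmn-recursion} for parts (2)--(3), and rescaling of \eqref{eq:bi-in-uni} together with the identities $(q^2;q^2)_k=(-1)^kq^{k(k+1)/2}(q-q^{-1})^k[k]^!_q$ and $\frac{(q^2;q^2)_n}{(q^2;q^2)_m(q^2;q^2)_{n-m}}=q^{m(n-m)}\begin{bmatrix}n\\ m\end{bmatrix}_q$ for part (4). The only loose end, the stray factor ``$2^{\,?}$'' in your power count, resolves itself: $(b/2)^{m+n}(2/b)^{(m-k)+(n-k)}=(b/2)^{2k}=(b^2/4)^k$ exactly, and the sign $(-1)^k$ from \eqref{eq:bi-in-uni} cancels against the one from $(q^2;q^2)_k$, which is why $\eta(k)$ carries no sign.
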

\begin{proof}
  The recursion \eqref{eq:Hmn-recursion} implies that $H_{m,n}(x,y;q^2,r)$ is a polynomial with leading term $x^my^n$, and that if a monomial $x^iy^j$ appears in $H_{m,n}(x,y;q^2,r)$ with nonzero coefficient then $i+j\equiv m+n$ mod $2$. This implies that \eqref{eq:wm-wmn-def} indeed defines polynomials in $\cM[x]$ and $\cM[x,y]$. This proves (1). Properties (2) and (3) are obtained by replacing $x$ and $y$ by $x/b$ and $y/b$, respectively, and $q$ by $q^2$ in the recursions \eqref{eq:Hm-recursion} and \eqref{eq:Hmn-recursion}. Property (4) follows from \eqref{eq:bi-in-uni} and the relations
\begin{align*}
(q^2,q^2)_k&=(-1)^k q^{k(k+1)/2}(q{-}q^{-1})^k[k]_q^!, & \frac{(q^2;q^2)_n}{(q^2;q^2)_m (q^2;q^2)_{n-m}}&= q^{m(n-m)}\begin{bmatrix}n \\ m \end{bmatrix}_{q}
\end{align*}
  for $n\ge m$.
\end{proof}
\subsection{A relation between $q$-Hermite polynomials for $q^2$ and $q^{-2}$}
In the following we fix $b^2\in\cM$ and we let $w_m(x)\in \cM[x]$ be the polynomial defined by \eqref{eq:wm-wmn-def}. Moreover, set
\begin{align}\label{eq:vm-def}
  v_m(x)=\sum_{k=0}^{\lfloor m/2 \rfloor} (-1)^k q^k \eta(k) \frac{[m]^!_q}{[m{-}2k]^!_q [k]^!_q} w_{m-2k}(x) \in \cM[x]. 
\end{align}
where as before $\eta(k)=(q-q^{-1})^k q^{-k(k+1)/2} (b^2/4)^k$. In the following we set $w_k(x)=0$ for all $k< 0$, and hence in the definition \eqref{eq:vm-def} of $v_m(x)$ we may take the sum over all non-negative integers $k$.
\begin{lem}\label{lem:vm-recursion}
  The polynomials $v_m(x)$ satisfy the recursion
  \begin{align}\label{eq:vm-recursion}
     x v_m(x)= v_{m+1}(x) + \frac{b^2}{4}(1-q^{-2m}) v_{m-1}(x)
  \end{align}
  with $v_0(x)=1$ and $v_1(x)=x$. Hence we have
  \begin{align}\label{eq:vm-def2}
     v_m(x) = (b/2)^m H_m(\frac{x}{b},q^{-2})
  \end{align}
  for all $m\in \N$.
\end{lem}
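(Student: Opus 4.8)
The plan is to establish the recurrence \eqref{eq:vm-recursion} by substituting the explicit expansion \eqref{eq:vm-def} into it and comparing coefficients in the basis of the $w_m(x)$, and then to deduce \eqref{eq:vm-def2} by a uniqueness argument for three-term recurrences. First I would dispose of the initial data: for $m=0$ and $m=1$ only the summand $k=0$ occurs in \eqref{eq:vm-def}, and since its coefficient is $(-1)^0q^0\eta(0)[m]_q^!/([m]_q^![0]_q^!)=1$, one reads off $v_0(x)=w_0(x)=1$ and $v_1(x)=w_1(x)=x$.

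For the recurrence itself, write $c_{m,k}=(-1)^kq^k\eta(k)[m]_q^!/([m-2k]_q^![k]_q^!)$ so that $v_m(x)=\sum_{k\ge 0}c_{m,k}\,w_{m-2k}(x)$, and apply \eqref{eq:wm-recursion} to each factor $w_{m-2k}(x)$ appearing in $x\,v_m(x)$. The summand $w_{m-2k+1}$ contributes to the coefficient of $w_{m+1-2j}(x)$ exactly when $k=j$, and $w_{m-2k-1}$ contributes when $k=j-1$; matching this against the coefficients of $w_{m+1-2j}(x)$ in $v_{m+1}(x)$ and in $(b^2/4)(1-q^{-2m})v_{m-1}(x)$ reduces the whole statement to the scalar identity
\[
c_{m,j}+\frac{b^2}{4}\bigl(1-q^{2(m-2j+2)}\bigr)c_{m,j-1}
= c_{m+1,j}+\frac{b^2}{4}\bigl(1-q^{-2m}\bigr)c_{m-1,j-1}
\]
for all $j\ge 0$, with the convention $c_{m,-1}=0$.

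To verify this identity, the case $j=0$ is immediate, since then the $c_{m,j-1}$ terms drop out and $c_{m,0}=c_{m+1,0}=1$. For $j\ge 1$ I would divide through by $c_{m,j-1}$. Using $\eta(k)/\eta(k-1)=(q-q^{-1})q^{-k}(b^2/4)$, the ratio $c_{m,j}/c_{m,j-1}$ factors as $-(q-q^{-1})q^{1-j}(b^2/4)[j]_q^{-1}$ times a product of consecutive $q$-integers, while the remaining ratios of $q$-factorials telescope into short products of consecutive $q$-integers that share common factors; cancelling these leaves an elementary $q$-integer identity. The only nonroutine simplification is the factorization
\[
[m-2j+1]_q-[m+1]_q=-[j]_q\bigl(q^{m+1-j}+q^{-(m+1-j)}\bigr),
\]
whose factor $[j]_q$ is precisely what cancels the $[j]_q^{-1}$ produced by the coefficient ratio. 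Once this is used, both sides become Laurent polynomials in $q$ that one checks to coincide by expanding every $[n]_q$ as $(q^n-q^{-n})/(q-q^{-1})$. This reshuffling of $q$-integers is where the only genuine work lies, but it is a finite, mechanical check.

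Finally, \eqref{eq:vm-def2} follows at once from \eqref{eq:vm-recursion}. By \eqref{eq:Hm-recursion} with $q$ replaced by $q^{-2}$, the polynomials $(b/2)^mH_m(x/b;q^{-2})$ satisfy, after the substitution $x\mapsto x/b$ and the rescaling used in Lemma \ref{lem:wmn}(2), exactly the recurrence \eqref{eq:vm-recursion} together with the same initial values $1$ and $x$. Since a three-term recurrence with prescribed initial data has a unique solution, we conclude that $v_m(x)=(b/2)^mH_m(x/b;q^{-2})$ for all $m\in\N$.
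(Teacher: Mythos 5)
Your proposal is correct and follows essentially the same route as the paper: both expand $v_m(x)$ in the basis $\{w_n(x)\}$, apply the three-term recursion \eqref{eq:wm-recursion}, and reduce the claim to the same $q$-integer identity $[m+1]_q-[m+1-2j]_q=[j]_q\bigl(q^{m+1-j}+q^{-(m+1-j)}\bigr)$, after which \eqref{eq:vm-def2} follows by uniqueness of the solution to the recursion with the given initial data. The only difference is organizational — you match coefficients of each $w_{m+1-2j}(x)$ separately, whereas the paper computes $xv_m(x)-\tfrac{b^2}{4}(1-q^{-2m})v_{m-1}(x)$ in one string of equalities — which is the same calculation.
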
  
\begin{proof}
  Using the definition \eqref{eq:vm-def} of $v_m(x)$ and the recursion \eqref{eq:wm-recursion} we calculate
  \begin{align*}
    &xv_m(x) - \frac{b^2}{4}(1-q^{-2m})v_{m-1}(x)\\
    =&\sum_{k=0}^{\lfloor m/2 \rfloor} (-1)^k q^k \eta(k) \frac{[m]^!_q}{[m{-}2k]^!_q [k]^!_q} \Big(w_{m-2k+1}(x) + \frac{b^2}{4} (1{-}q^{2(m-2k)})w_{m-2k-1}(x)\Big)\\
    &-\frac{b^2}{4}(1-q^{-2m})\sum_{k=0}^{\lfloor (m-1)/2 \rfloor} (-1)^k q^k \eta(k) \frac{[m-1]^!_q}{[m{-}1{-}2k]^!_q [k]^!_q}w_{m-1-2k}(x)\\
    =& w_{m+1}(x)
    +\sum_{k\ge 1} (-1)^k q^k \eta(k) \frac{[m]^!_q}{[m{+}1{-}2k]^!_q [k]^!_q} \cdot\\
    & \qquad \qquad\qquad \qquad \cdot\Big([m{+}1{-}2k]_q + [k]_q(q^{m-k+1} + q^{-m+k-1}) \Big)w_{m+1-2k}(x)\\
    =&v_{m+1}(x).
  \end{align*}
  This confirms the recursion \eqref{eq:vm-recursion}. The second statement holds by parts (1) and (2) of Lemma \ref{lem:wmn} with $q$ replaced by $q^{-1}$.
\end{proof}
\subsection{Serre combinations of bivariate continuous $q$-Hermite polynomials}\label{sec:wmn-Serre-combi}
The following identity is the crucial ingredient needed in the following sections to express the quantum Serre relations for quantum symmetric pairs in terms of univariate continuous $q$-Hermite polynomials.
\begin{prop}\label{prop:Serre-bi-uni}
  Let $a\in -\N$ and assume that $r=q^a$. Then the polynomials $w_m(x), v_n(x)\in \cM[x]$ and $w_{m,n}(x,y)\in \cM[x,y]$ defined by \eqref{eq:wm-wmn-def} and \eqref{eq:vm-def} satisfy the relation
  \begin{align*}
    \sum_{n=0}^{1-a} (-1)^n \begin{bmatrix} 1-a\\ n \end{bmatrix}_q w_{1-a-n,n}(x,y) &= \sum_{n=0}^{1-a} (-1)^n \begin{bmatrix} 1-a\\ n \end{bmatrix}_q w_{1-a-n}(x)\,v_n(y) \\
    &= \sum_{n=0}^{1-a} (-1)^n \begin{bmatrix} 1-a\\ n \end{bmatrix}_q v_{1-a-n}(x)\,w_{n}(y).
  \end{align*}  
\end{prop}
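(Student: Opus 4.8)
The plan is to establish the first equality by expanding both of the relevant sums in the $\field[b^2]$-basis $\{w_p(x)w_\ell(y)\}$ of $\cM[x,y]$ and comparing coefficients, and then to deduce the second equality from the first by a symmetry argument. Throughout write $N=1-a$, so $N\in\N$ with $N\ge 1$, and the hypothesis $r=q^a$ reads $r=q^{1-N}$. The one genuinely useful numerical observation is that in any term of \eqref{eq:wmn} with $m+n=N$ one has $r^kq^{k(m+n)}=q^{k(1-N)}q^{kN}=q^k$, which is exactly the power of $q$ appearing in the definition \eqref{eq:vm-def} of $v_n$. This collapse is where the assumption $r=q^a$ is used.

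First I would expand the left-hand sum. Substituting \eqref{eq:wmn} with $m=N-n$ into $\sum_{n=0}^{N}(-1)^n\begin{bmatrix}N\\n\end{bmatrix}_q w_{N-n,n}(x,y)$ and using $q^{k(m+n)}=q^{kN}$ produces a double sum over $n$ and $k$ whose terms are multiples of $w_{N-n-k}(x)\,w_{n-k}(y)$. Independently, substituting \eqref{eq:vm-def} into the middle sum $\sum_{n=0}^{N}(-1)^n\begin{bmatrix}N\\n\end{bmatrix}_q w_{N-n}(x)\,v_n(y)$ yields a double sum with terms $w_{N-n}(x)\,w_{n-2k}(y)$. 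A short check of the index ranges shows that both double sums are supported on exactly the monomials $w_p(x)w_\ell(y)$ with $p,\ell\ge 0$ and $N-p-\ell\in 2\N$, and that for each fixed $(p,\ell)$ each sum contributes a single term, the one with $k=(N-p-\ell)/2$ (so that $n=\ell+k$ in the first sum and $n=N-p$ in the second).

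The heart of the matter is then the comparison of these two coefficients, which is a purely $q$-factorial identity. After factoring out the common quantity $(-1)^{\ell+k}\,\eta(k)\,q^k$ (using $r^kq^{kN}=q^k$ in the first sum, and the explicit sign $(-1)^k$ and $q$-power from \eqref{eq:vm-def} in the second, where $(-1)^{\ell+2k}(-1)^k=(-1)^{\ell+k}$), both coefficients collapse via repeated use of $\begin{bmatrix}a\\b\end{bmatrix}_q=[a]_q^!/([b]_q^![a-b]_q^!)$ together with the identities $N-\ell-k=p+k$ and $N-\ell-2k=p$ to the single symmetric expression $[N]_q^!/([p]_q^![\ell]_q^![k]_q^!)$. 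I expect the only delicate point here to be the bookkeeping of the factorials and the confirmation that the two supports coincide; the content is elementary but the indices must be tracked carefully. I would also record that $b^2$ enters only through the central powers $(b^2/4)^k$ inside $\eta(k)$ and through the coefficients of the monic polynomials $w_p$, so the desired relation is an identity of universal polynomials in $q$ and $b^2$; it may therefore be verified by treating $b^2$ as a central indeterminate and comparing $\field[b^2]$-coefficients, after which specialization gives the statement for any $b^2\in\cM$.

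Finally, the second equality follows from the first by symmetry. Writing $S$, $M$, $R$ for the three sums in the statement (left, middle, right), the relation $w_{m,n}(x,y)=w_{n,m}(y,x)$ inherited from the Hermite symmetry of Section \ref{sec:biv-q-Herm}, combined with the reindexing $n\mapsto N-n$ (using $(-1)^{N-n}=(-1)^N(-1)^n$ and $\begin{bmatrix}N\\N-n\end{bmatrix}_q=\begin{bmatrix}N\\n\end{bmatrix}_q$), gives $S(y,x)=(-1)^N S(x,y)$, while the same reindexing applied to $M$ gives $M(y,x)=(-1)^N R(x,y)$. Evaluating the already-proved identity $S=M$ at $(y,x)$ and substituting these two relations yields $S(x,y)=R(x,y)$, the second equality. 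The main obstacle in the whole argument is thus organizational rather than conceptual: keeping the three index substitutions mutually consistent and checking that the supports agree, since all the algebraic content reduces to the single factorial identity above.
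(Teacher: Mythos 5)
Your proof is correct, and its core coincides with the paper's: both arguments expand the bivariate sum via \eqref{eq:wmn}, use the hypothesis $r=q^a$ to collapse $r^kq^{k(m+n)}$ to $q^k$ on the diagonal $m+n=1-a$ (exactly the power of $q$ in \eqref{eq:vm-def}), and reduce everything to the same $q$-factorial identity
$\begin{bmatrix}N\\ n\end{bmatrix}_q\begin{bmatrix}N-n\\ k\end{bmatrix}_q\begin{bmatrix}n\\ k\end{bmatrix}_q[k]^!_q=\frac{[N]^!_q}{[p]^!_q[\ell]^!_q[k]^!_q}$ with $p=N-n-k$, $\ell=n-k$. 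For the first equality the difference is purely organizational: the paper performs a single change of summation variables ($\ell=n+k$, $m=k$) that transforms the bivariate sum directly into $\sum_\ell(-1)^\ell\begin{bmatrix}1-a\\ \ell\end{bmatrix}_q w_{1-a-\ell}(x)v_\ell(y)$, whereas you expand both sides and match coefficients; note that since you exhibit both sums as the same explicit expression, you never actually need linear independence of the products $w_p(x)w_\ell(y)$, so your indeterminate-$b^2$ specialization remark is harmless but superfluous. Where you genuinely diverge is the second equality: the paper proves it by a second reindexing of the same double sum ($\ell=1-a-n+k$, $m=k$) followed by the substitution $\ell\mapsto 1-a-\ell$, while you deduce it from the first equality via the symmetry $w_{m,n}(x,y)=w_{n,m}(y,x)$ and the index reversal $n\mapsto N-n$, yielding $S(y,x)=(-1)^NS(x,y)$ and $M(y,x)=(-1)^NR(x,y)$. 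Your route is slightly more conceptual — it exhibits the second identity as the mirror image of the first — at the cost of invoking the symmetry of $H_{m,n}$, which the paper records (citing earlier work) but does not use in this proof; the paper's route keeps everything inside one self-contained double-sum manipulation. Your signs and the needed commutativity (all coefficients lie in the commutative subalgebra generated by $b^2$, and $x,y$ are central) do check out, so both arguments are valid.
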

\begin{proof}
  For $r=q^a$ and any $b^2\in \cM$ we use Equation \eqref{eq:wmn} to calculate
  \begin{align*}
    &\sum_{n=0}^{1-a} (-1)^n \begin{bmatrix} 1-a\\ n \end{bmatrix}_q w_{1-a-n,n}(x,y)\\
    &= \sum_{n=0}^{1-a} (-1)^n \begin{bmatrix} 1{-}a\\ n \end{bmatrix}_q
   \sum_{k\ge 0} q^{k}\eta(k)\begin{bmatrix} 1{-}a{-}n\\ k \end{bmatrix}_q \begin{bmatrix} n\\ k \end{bmatrix}_q [k]^!_q w_{1-a-n-k}(x)w_{n-k}(y)
  \end{align*}
  where again $w_n(x)=0$ for $n<0$. Setting $\ell=n+k$ and $m=k$ we obtain
   \begin{align*}
    &\sum_{n=0}^{1-a} (-1)^n \begin{bmatrix} 1-a\\ n \end{bmatrix}_q w_{1-a-n,n}(x,y)\\
     &= \sum_{\ell=0}^{1-a} (-1)^\ell \begin{bmatrix} 1{-}a\\ \ell \end{bmatrix}_q  \sum_{m\ge 0} (-1)^mq^m \eta(m)\frac{[\ell]^!_q}{[\ell{-}2m]^!_q [m]^!_q} w_{1-a-\ell}(x) w_{\ell-2m}(y)\\
     &=\sum_{\ell=0}^{1-a} (-1)^\ell \begin{bmatrix} 1{-}a\\ \ell \end{bmatrix}_q w_{1-a-\ell}(x) v_{\ell}(y)
   \end{align*}
   which proves the first identity of the proposition. On the other hand, setting $\ell=1-a-n+k$ and $m=k$ we obtain
    \begin{align*}
    &\sum_{n=0}^{1-a} (-1)^n \begin{bmatrix} 1-a\\ n \end{bmatrix}_q w_{1-a-n,n}(x,y)\\
      &=(-1)^{1-a} \sum_{\ell=0}^{1-a} (-1)^\ell \begin{bmatrix} 1{-}a\\ \ell \end{bmatrix}_q   \sum_{m\ge 0} (-1)^mq^m \eta(m)\frac{[\ell]^!_q}{[\ell{-}2m]^!_q [m]^!_q} w_{\ell-2m}(x) w_{1-a-\ell}(y)\\
      &=(-1)^{1-a}\sum_{\ell=0}^{1-a} (-1)^\ell \begin{bmatrix} 1{-}a\\ \ell \end{bmatrix}_q v_{\ell}(x) w_{1-a-\ell}(y)
      =\sum_{\ell=0}^{1-a} (-1)^\ell \begin{bmatrix} 1{-}a\\ \ell \end{bmatrix}_q v_{1-a-\ell}(x) w_{\ell}(y)
    \end{align*}
    which proves the second identity.
\end{proof}
\subsection{Bar invariance of deformed quantum Serre polynomials}
Assume that the $\field$-algebra $\cM$ has a bar involution
\begin{align*}
  \barM:\cM \rightarrow \cM, \qquad m\mapsto \overline{m}^\cM
\end{align*}
which is a $k$-algebra homomorphism such that $\overline{q}^\cM=q^{-1}$. We extend $\barM$ to a bar involution on $\cM[x]$ and $\cM[x,y]$ by application of $\barM$ to the coefficients of any polynomial. Again, we consider the polynomials $w_m(x), v_m(x)$ and $w_{m,n}(x,y)$ formed with respect to a fixed element $b^2\in \cM\setminus \{0\}$. Define $(b')^2=\overline{b^2}^\cM$ and let $w_m'(x), v_m'(x)$ and $w_{m,n}'(x,y)$ be the corresponding polynomials formed with respect to the element $(b')^2$.
\begin{lem}\label{lem:ow=v}
  Retain the setting of Proposition \ref{prop:Serre-bi-uni} and set  $(b')^2=\overline{b^2}^\cM$. Then the relations
  \begin{align}\label{eq:ow=v}
     \overline{w_n(x)}^\cM=v_n'(x), \qquad \overline{v_n(x)}^\cM=w_n'(x)
  \end{align}
 hold in $\cM[x]$ for all $n\in \N$. 
\end{lem}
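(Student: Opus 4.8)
The plan is to prove both identities simultaneously by induction on $n$, exploiting the fact that the bar involution interchanges the defining three-term recursions of the $w$- and $v$-families. Since $\barM$ acts only on the coefficients of a polynomial, it is a $k$-algebra endomorphism of $\cM[x]$ that fixes $x$, and by hypothesis it satisfies $\overline{q}^\cM=q^{-1}$, hence $\overline{q^{2m}}^\cM=q^{-2m}$; moreover it fixes rational constants such as $\tfrac14$, and $\overline{b^2}^\cM=(b')^2$ by the definition of $(b')^2$. These are the only facts about $\barM$ that the argument needs.

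First I would establish $\overline{w_n(x)}^\cM=v_n'(x)$. Applying $\barM$ to the recursion \eqref{eq:wm-recursion} for $w_m(x)$ and using $\overline{1-q^{2m}}^\cM=1-q^{-2m}$ together with $\overline{b^2}^\cM=(b')^2$ gives
\[
  x\,\overline{w_m(x)}^\cM = \overline{w_{m+1}(x)}^\cM + \frac{(b')^2}{4}\,(1-q^{-2m})\,\overline{w_{m-1}(x)}^\cM,
\]
which is exactly the recursion \eqref{eq:vm-recursion} for the primed polynomials $v_m'(x)$ (formed with $(b')^2$ in place of $b^2$). Since $\overline{w_0(x)}^\cM=1=v_0'(x)$ and $\overline{w_1(x)}^\cM=x=v_1'(x)$, the sequences $(\overline{w_n(x)}^\cM)_n$ and $(v_n'(x))_n$ share the same initial data and obey the same recursion, so they coincide for all $n$ by induction. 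The second identity $\overline{v_n(x)}^\cM=w_n'(x)$ follows symmetrically: applying $\barM$ to the recursion \eqref{eq:vm-recursion} for $v_m(x)$ and using $\overline{1-q^{-2m}}^\cM=1-q^{2m}$ produces the recursion \eqref{eq:wm-recursion} with $(b')^2$, which is precisely the recursion satisfied by $w_m'(x)$, and again the initial conditions match. (If $\barM$ is genuinely an involution, the second identity can alternatively be deduced from the first by applying $\barM$ and using $\overline{(b')^2}^\cM=b^2$, but the direct argument above does not even require this.)

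There is no substantive obstacle here: the entire content is the bookkeeping observation that $\barM$ sends $1-q^{2m}$ to $1-q^{-2m}$ and therefore swaps the two recursions \eqref{eq:wm-recursion} and \eqref{eq:vm-recursion}, while all dependence on $b^2$ is absorbed into the transition from unprimed to primed polynomials. The one point worth flagging is that potential noncommutativity of $\cM$ causes no trouble, because the coefficient $\tfrac{b^2}{4}(1-q^{2m})$ is a left scalar multiple of $b^2$ (with $\tfrac14$ and $1-q^{2m}$ central in the $\field$-algebra $\cM$) and $\barM$ is multiplicative, so it transforms to $\tfrac{(b')^2}{4}(1-q^{-2m})$ with the order of factors preserved. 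I would also remark that working from the recursions rather than from the explicit expansion \eqref{eq:vm-def} is essential, since $\barM$ does not map $\eta(k)$ to the corresponding primed quantity on the nose, making the closed-form route unnecessarily messy.
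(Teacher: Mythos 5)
Your proof is correct and follows essentially the same route as the paper: verify the initial conditions $\overline{w_0(x)}^\cM=1=v_0'(x)$, $\overline{w_1(x)}^\cM=x=v_1'(x)$, observe that applying $\barM$ to the recursion \eqref{eq:wm-recursion} yields the recursion \eqref{eq:vm-recursion} for the primed polynomials (since $\overline{q^{2m}}^\cM=q^{-2m}$ and $\overline{b^2}^\cM=(b')^2$), and conclude by induction, with the second identity handled symmetrically. Your additional remarks on noncommutativity and on avoiding the closed-form expansion \eqref{eq:vm-def} are sensible but not needed beyond what the paper's argument already contains.
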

\begin{proof}
  By Lemma \ref{lem:wmn}.(2) and Lemma \ref{lem:vm-recursion} we have $\overline{w_0(x)}^\cM=1=v_0'(x)$ and $\overline{w_1(x)}^\cM=x=v_1'(x)$ and the recursions \eqref{eq:wm-recursion}, \eqref{eq:vm-recursion} inductively imply the first relation in \eqref{eq:ow=v}. The second relation in \eqref{eq:ow=v} is obtained analogously. 
\end{proof}
The above lemma and Proposition \ref{prop:Serre-bi-uni} allow us to describe the behavior of the deformed quantum Serre polynomial under the bar involution.
\begin{cor}\label{cor:def-qSerre-bar}
  Retain the setting of Proposition \ref{prop:Serre-bi-uni} and set  $(b')^2=\overline{b^2}^\cM$. Then the relation
  \begin{align*}
   \overline{\sum_{n=0}^{1-a}(-1)^n \begin{bmatrix} 1-a\\ n \end{bmatrix}_q w_{1-a-n,n}(x,y)}^\cM = \sum_{n=0}^{1-a}(-1)^n \begin{bmatrix} 1-a\\ n \end{bmatrix}_q w_{1-a-n,n}'(x,y)
  \end{align*}
 holds in $\cM[x,y]$.
\end{cor}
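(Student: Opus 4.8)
The plan is to chain the two identities of Proposition \ref{prop:Serre-bi-uni} together with the swap of univariate polynomials recorded in Lemma \ref{lem:ow=v}, exploiting the fact that the bar involution interchanges the roles of $w_n$ and $v_n$. First I would rewrite the argument of the bar on the left-hand side using the \emph{first} identity of Proposition \ref{prop:Serre-bi-uni}, namely
\[
\sum_{n=0}^{1-a}(-1)^n \begin{bmatrix} 1-a\\ n \end{bmatrix}_q w_{1-a-n,n}(x,y) = \sum_{n=0}^{1-a}(-1)^n \begin{bmatrix} 1-a\\ n \end{bmatrix}_q w_{1-a-n}(x)\,v_n(y).
\]
The point of passing to this univariate form is that the parameter $r=q^a$ has been eliminated from the expression, so that no delicate tracking of $\overline{r}^\cM$ under the bar involution is required.

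Next I would apply $\barM$ term by term. Since $\barM$ is a $k$-algebra homomorphism fixing $x$ and $y$ with $\overline{q}^\cM=q^{-1}$, the signs $(-1)^n$ are unchanged, and each Gaussian binomial is bar-invariant because $\begin{bmatrix}1-a\\n\end{bmatrix}_{q^{-1}}=\begin{bmatrix}1-a\\n\end{bmatrix}_q$ (each factor $[k]_q$ is symmetric under $q\mapsto q^{-1}$). Because the variables are central in $\cM[x,y]$, the coefficientwise bar is a ring homomorphism on $\cM[x,y]$, so it distributes over the product $w_{1-a-n}(x)v_n(y)$. Invoking Lemma \ref{lem:ow=v}, i.e. $\overline{w_{1-a-n}(x)}^\cM=v_{1-a-n}'(x)$ and $\overline{v_n(y)}^\cM=w_n'(y)$, the right-hand side above is sent to
\[
\sum_{n=0}^{1-a}(-1)^n \begin{bmatrix} 1-a\\ n \end{bmatrix}_q v_{1-a-n}'(x)\,w_n'(y).
\]

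Finally I would recognize this as the \emph{second} identity of Proposition \ref{prop:Serre-bi-uni}, now read for the primed polynomials formed with $(b')^2=\overline{b^2}^\cM$: the hypothesis $r=q^a$ with $a\in-\N$ still holds and the proposition is valid for every $b^2\in\cM$, so in particular for $(b')^2$. This identity repackages the displayed sum as $\sum_{n}(-1)^n\begin{bmatrix}1-a\\n\end{bmatrix}_q w_{1-a-n,n}'(x,y)$, which is exactly the claimed right-hand side of the corollary.

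The only genuinely conceptual step—and the one I would flag as the crux—is the \emph{asymmetric} use of Proposition \ref{prop:Serre-bi-uni}: its first reformulation is invoked before applying $\barM$ and its second afterwards. This asymmetry is forced by Lemma \ref{lem:ow=v}, which swaps $w_n\leftrightarrow v_n$: the expression $w_{1-a-n}(x)v_n(y)$ is carried to $v_{1-a-n}'(x)w_n'(y)$, and only the \emph{second} univariate reformulation of the Serre combination can repackage the latter back into a bivariate $q$-Hermite sum. Everything else is routine bookkeeping of signs, $q$-binomials, and the ring-homomorphism property of $\barM$.
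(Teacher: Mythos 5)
Your proof is correct and follows essentially the same route as the paper: apply the first identity of Proposition \ref{prop:Serre-bi-uni} to pass to the univariate form $\sum_n (-1)^n \begin{bmatrix}1-a\\ n\end{bmatrix}_q w_{1-a-n}(x)v_n(y)$, use Lemma \ref{lem:ow=v} under the bar involution, and then invoke the second identity of the proposition for the primed polynomials built from $(b')^2$. Your explicit remarks on the bar-invariance of the Gaussian binomials and the asymmetric use of the two reformulations are exactly the (implicit) content of the paper's three-line computation.
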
  
\begin{proof}
  By Lemma \ref{lem:ow=v} and Proposition \ref{prop:Serre-bi-uni} we have
  \begin{align*}
     \overline{\sum_{n=0}^{1-a} (-1)^n \begin{bmatrix} 1-a\\ n \end{bmatrix}_q w_{1-a-n,n}(x,y)}^\cM
    &= \sum_{n=0}^{1-a} (-1)^n \begin{bmatrix} 1-a\\ n \end{bmatrix}_q \overline{w_{1-a-n}(x)}^\cM\,\overline{v_n(y)}^\cM\\
    &=\sum_{n=0}^{1-a} (-1)^n \begin{bmatrix} 1-a\\ n \end{bmatrix}_q v_{1-a-n}'(x)\,w_n'(y)\\
    &= \sum_{n=0}^{1-a} (-1)^n \begin{bmatrix} 1-a\\ n \end{bmatrix}_q w_{1-a-n,n}'(x,y)
  \end{align*}
  as desired.
\end{proof}  
\subsection{Deformed Chebyshev polynomials of the second kind}\label{sec:Chebyshev}
For any $r\in \field$ consider the sequence of polynomials given by the recursion 
formula
\begin{equation}
\label{eq:Du-recursion}
    \Dc_{n+1}(x; q, r) = 2x \Dc_{n}(x; q, r) - \frac{r^{-1}-q^{n+1}}{1 - q^{n+1}} \Dc_{n-1}(x; q, r), \quad n \geq 0 
\end{equation}
subject to the initial conditions 
\begin{equation}
\label{eq:Du-ic}
\Dc_0(x; q, r) =1, \quad \Dc_1(x; q, r) =2x.
\end{equation}
It is clear from the recursion and the initial conditions that in the case $r=1$ we recover the Chebyshev polynomials of the second kind:
\[
\Dc_n(x; q, 1) = U_n(x),
\]
see e.g. \cite[Sect. 5.7]{b-MOS} for background on the classical Chebyshev polynomials.
\begin{eg}\label{eg:Du-explicit}
  By direct computation one obtains
  \begin{align}
    \Dc_2(x;q,r)&= 4 x^2-  \frac{r^{-1}-q^2}{(1-q^2)}, \label{eq:Du2}\\
    \Dc_3(x;q,r)&=8x^3 - 2x\Big( \frac{r^{-1}-q^2}{1-q^2} +  \frac{r^{-1}-q^3}{1-q^3} \Big) .\label{eq:Du3}
  \end{align}  
\end{eg}  

By Favard's theorem, $\{\Dc_n(x)\}_{n=0}^\infty$ are a sequence of orthogonal polynomials with respect to a Borel measure 
when $q$ and $r$ are real and $|q|< 1 \leq r^{-1}$.
We will call them {\em{deformed Chebyshev polynomials of the second kind}}.
If $q$ and $r$ are viewed as indeterminates, then 
\[
(1-q^2) \ldots (1-q^n) \Dc_n(x; q, r) \in \Z[q,r^{-1}], \quad \mbox{for} \; \; n \geq 2.
\]

The rescaled polynomials 
\[
\widetilde{\Dc}_n(x; q, r) = r^{n/2} \Dc_n(r^{-1/2} x; q, r)
\]
satisfy the recursion 
\[
\widetilde{\Dc}_{n+1}(x; q, r) = 2x \widetilde{\Dc}_{n}(x; q, r) - \frac{1- r q^{n+1}}{1 - q^{n+1}} \widetilde{\Dc}_{n-1}(x; q, r), \quad n \geq 0 
\]
and the same initial conditions as $\Dc_{n}(x; q, r)$. In our recursion for the deformed Chebyshev 
polynomials we used $r^{-1}$ instead of $r$ because the above recursion naturally 
leads to expressions involving $q$-Pochhammer symbols.

Next we provide explicit formulas for two generating functions for the deformed Chebyshev polynomials. In an appropriate field extension of $\field=k(q)$ define 
\begin{equation}
\label{eq:xa12}
x_{1,2}=x\pm \sqrt{x^2-1}, \quad
a_{1,2}=x\pm \sqrt{x^2-r^{-1}}, \quad
\widetilde{a}_{1,2} = r^{-1}(x\pm \sqrt{x^2-r}),
\end{equation}
which satisfy the relations 
\begin{align*}
1-2xs+s^2&=(1- x_1s)(1-x_2s), 
\\
1-2xs+r^{-1}s^2&=(1-a_1s)(1-a_2s),\\ 
1-2r^{-1} xs+r^{-1}s^2&=(1-\widetilde{a}_1s)(1-\widetilde{a}_2s).
\end{align*}
Recall the definition of the basic hypergeometric function ${}_3\phi_2$, see \cite[1.10]{b-KLS10}, and define
\begin{align} \label{eq:def-eta}
  \eta\left(\begin{matrix} x \\s \end{matrix}\,;\,q\right)&=\frac{1}{1-2xs+r^{-1}s^2} {}_3\phi_2\left(\begin{array}{c} q,x_1s,x_2s \\qa_1s,qa_2s\end{array};q,q^2\right)\\
    &=\sum_{k=0}^\infty \frac{(x_1s;q)_k(x_2s;q)_k}{(a_1s;q)_{k+1}(a_2s;q)_{k+1}}\,q^{2k} \nonumber
\end{align}
As the basic hypergeometric function ${}_3\phi_2\left(\begin{array}{c} q,x_1s,x_2s \\qa_1s,qa_2s\end{array};q,q^2\right)$ is analytic at $s=0$, 
so is  $\eta\left(\begin{matrix} x \\s \end{matrix}\,;\,q\right)$. Analogously 
\[
\widetilde{\eta} \left(\begin{matrix} x \\s \end{matrix}\,;\,q\right)=\frac{1}{1-2xs+s^2} {}_3\phi_2\left(\begin{array}{c} q, q \widetilde{a}_1s, q \widetilde{a}_2 s 
\\qx_1s,qx_2s\end{array};q,q r\right)
\]
is analytic at $s=0$.
\begin{prop}
\label{prop:Du-genfn} We have
\begin{align*}
   &\eta\left(\begin{matrix} x \\s \end{matrix}\,;\,q, r\right)= \sum_{n=0}^\infty \Dc_n(x;q,r) \frac{s^n}{1-q^{n+2}} \quad \mbox{and} \\
   &\widetilde{\eta}\left(\begin{matrix} x \\s \end{matrix}\,;\,q, r\right)= \sum_{n=0}^\infty \frac{(q^2;q)_n}{(qr;q)_{n+1}} \widetilde{\Dc}_n(x;q,r) s^n 
   = \sum_{n=0}^\infty \frac{(q^2;q)_n}{(qr;q)_{n+1}} \Dc_n(r^{-1/2}x;q,r) (r^{1/2} s)^n.
\end{align*}
\end{prop}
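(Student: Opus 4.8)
The plan is to prove each closed form by deriving a first-order $q$-difference equation in $s$ satisfied by the generating function, and then checking that the power series on the right-hand side satisfies the \emph{same} equation; since such an equation together with the value of the constant term determines its power series solution uniquely, this forces equality. The third expression in the second line is immediate from the rescaling $\widetilde{\Dc}_n(x;q,r)=r^{n/2}\Dc_n(r^{-1/2}x;q,r)$, which gives $\widetilde{\Dc}_n(x;q,r)\,s^n=\Dc_n(r^{-1/2}x;q,r)(r^{1/2}s)^n$ termwise, so only the two middle identities require an argument.

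For the first generating function I would start from the series form of $\eta$ in \eqref{eq:def-eta}, writing $\eta=\sum_{k\ge0}T_k(s)$ with $T_k(s)=\frac{(x_1s;q)_k(x_2s;q)_k}{(a_1s;q)_{k+1}(a_2s;q)_{k+1}}\,q^{2k}$, and apply the Pochhammer shift $(aq;q)_k=(a;q)_{k+1}/(1-a)$ to $T_k(qs)$. A direct substitution gives
\[
T_k(qs)=q^{-2}\,\frac{(1-a_1s)(1-a_2s)}{(1-x_1s)(1-x_2s)}\,T_{k+1}(s),
\]
so summing over $k\ge0$, using $T_0(s)=1/\big((1-a_1s)(1-a_2s)\big)$ and the factorizations $(1-x_1s)(1-x_2s)=1-2xs+s^2$, $(1-a_1s)(1-a_2s)=1-2xs+r^{-1}s^2$, telescopes to the functional equation
\[
(1-2xs+r^{-1}s^2)\,\eta(s)-q^2(1-2xs+s^2)\,\eta(qs)=1.
\]
Setting $d_n=\Dc_n/(1-q^{n+2})$ and extracting the coefficient of $s^n$ from $(1-2xs+r^{-1}s^2)\sum d_ns^n-q^2(1-2xs+s^2)\sum d_nq^ns^n$ produces $(1-q^{n+2})d_n-2x(1-q^{n+1})d_{n-1}+(r^{-1}-q^n)d_{n-2}$, which upon substituting $d_n$ collapses to $\Dc_n-2x\Dc_{n-1}+\tfrac{r^{-1}-q^n}{1-q^n}\Dc_{n-2}$; this vanishes for $n\ge1$ by the recursion \eqref{eq:Du-recursion} and equals $1$ at $n=0$. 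Because $1-q^{n+2}\ne0$, the equation determines every $d_n$ from the lower ones, so the solution is unique and $\sum d_ns^n=\eta$.

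The second identity follows the same scheme. First I would expand $\widetilde\eta$ into a series by the definition of ${}_3\phi_2$ and absorption of the prefactor, using $(1-x_1s)(qx_1s;q)_k=(x_1s;q)_{k+1}$, to get $\widetilde\eta=\sum_{k\ge0}\widetilde T_k(s)$ with $\widetilde T_k(s)=\frac{(q\widetilde{a}_1s;q)_k(q\widetilde{a}_2s;q)_k}{(x_1s;q)_{k+1}(x_2s;q)_{k+1}}(qr)^k$. The analogous shift computation, now using $(1-q\widetilde{a}_1s)(1-q\widetilde{a}_2s)=1-2qr^{-1}xs+q^2r^{-1}s^2$, telescopes to
\[
(1-2xs+s^2)\,\widetilde\eta(s)-qr\,(1-2qr^{-1}xs+q^2r^{-1}s^2)\,\widetilde\eta(qs)=1.
\]
With $e_n=\tfrac{(q^2;q)_n}{(qr;q)_{n+1}}\widetilde{\Dc}_n$, the coefficient of $s^n$ equals $(1-q^{n+1}r)e_n-2x(1-q^{n+1})e_{n-1}+(1-q^{n+1})e_{n-2}$, and the telescoping identities $(q^2;q)_n=(1-q^{n+1})(q^2;q)_{n-1}$ and $(qr;q)_{n+1}=(1-q^{n+1}r)(qr;q)_n$ reduce this to $\tfrac{(q^2;q)_n}{(qr;q)_n}\big(\widetilde{\Dc}_n-2x\widetilde{\Dc}_{n-1}+\tfrac{1-rq^n}{1-q^n}\widetilde{\Dc}_{n-2}\big)$, which vanishes for $n\ge1$ by the rescaled recursion and is $1$ at $n=0$; uniqueness again holds since $1-q^{n+1}r\ne0$.

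I expect the only genuine obstacle to be the derivation of the two functional equations: one must arrange the $q$-shift of the $q$-Pochhammer factors so that the sum telescopes and the resulting rational prefactor simplifies exactly to the ratio of the two displayed quadratics, up to the scalar $q^{-2}$ (resp. $(qr)^{-1}$). The $\widetilde\eta$ case demands extra care because of its asymmetric shape — the numerator arguments carry an extra factor $q$ and the base is $qr$ rather than $q^2$ — so one must confirm that the shifted quadratic is precisely $1-2qr^{-1}xs+q^2r^{-1}s^2$. Once both functional equations are in hand, the coefficient comparisons are routine bookkeeping with $q$-Pochhammer symbols.
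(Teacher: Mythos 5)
Your proposal is correct and follows essentially the same route as the paper's proof: both derive the first-order $q$-difference equations $(1-2xs+r^{-1}s^2)\,\eta(s)-q^2(1-2xs+s^2)\,\eta(qs)=1$ and its $\widetilde{\eta}$-analogue by the same $q$-Pochhammer shift of the series terms, and then identify the Taylor coefficients with $\Dc_n/(1-q^{n+2})$ (resp.\ the rescaled coefficients) via the recursion \eqref{eq:Du-recursion} and uniqueness. The only difference is presentational: you verify that the claimed series satisfies the functional equation, while the paper solves the resulting coefficient recursion from scratch, and you write out the $\widetilde{\eta}$ case in full where the paper only sketches it.
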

The first generating function will play a key role in Sections \ref{sec:gf}--\ref{sec:PN}. In the special case $r=1$, the second one reduces to the 
standard generating function for the Chebyshev polynomials of the second kind
\[
\sum_{n=0}^\infty U_n(x) s^n = (1 - 2xs +s^2)^{-1}.
\]
\begin{proof} For simplicity of the notation we suppress the arguments $s$ and $q$ of the functions $\eta$ and $\widetilde{\eta}$. 
Consider the Taylor expansion 
\[
   \eta(x)= \sum_{n=0}^\infty f_n(x;q) s^n.
\]
in the variable $s$ with Taylor coefficients $f_n(x;q)$. We have
 \begin{align*}
    \eta(qs) =& \sum_{k=0}^\infty \frac{(qx_1s;q)_k(qx_2s;q)_k}{(qa_1s;q)_{k+1}(qa_2s;q)_{k+1}}\,q^{2k} \\
    =& q^{-2} \frac{(1-a_1s)(1-a_2s)}{(1-x_1s)(1-x_2s)} \sum_{k=1}^\infty
    \frac{(x_1s;q)_k(x_2s;q)_k}{(a_1s;q)_{k+1}(a_2s;q)_{k+1}}\,q^{2k}\\
    =& q^{-2} \frac{(1-a_1s)(1-a_2s)}{(1-x_1s)(1-x_2s)} \left(\eta(s)-\frac{1}{(1-a_1s)(1-a_2s)}\right)\\
    =& q^{-2} \frac{1-2xs+r^{-1}s^2}{1-2xs+s^2} \left(\eta(s)-\frac{1}{1-2xs+r^{-1}s^2}\right),
  \end{align*}
and thus,   
\begin{align*}
  q^2(1-2xs+s^2) \eta(qs) = (1-2xs+r^{-1}s^2) \eta(s)-1.
\end{align*}
Comparing the coefficients of $s^{n+1}$ in the Taylor series expansion of both sides at $s=0$ gives
\begin{align*}
  q^{n+3} f_{n+1}(x;q) -&2 q^{n+2}x f_{n}(x;q) + q^{n+1} f_{n-1}(x;q) \\
    &= f_{n+1}(x;q) - 2x f_{n}(x;q) + r^{-1}f_{n-1}(x;q) - \delta_{n,-1}.
\end{align*}
Hence, $f_n(x;q)$ are uniquely determined by the recursion 
\begin{align*}
    (1-q^{n+3}) f_{n+1}(x;q) = 2x(1-q^{n+2}) f_{n}(x;q) - (r^{-1}-q^{n+1})f_{n-1}(x;q) + \delta_{n,-1}
\end{align*}
with the initial condition $f_n(x;q)=0$ for $n<0$. Comparing this to the recursion \eqref{eq:Du-recursion} and initial conditions \eqref{eq:Du-ic} 
gives that 
\[
f_n(x;q) = \Dc_n(x; q, r)/(1-q^{n+2}) \quad \mbox{for} \quad n \geq 0
\]
which proves the first generating function identity. Similarly, for the second identity, one first shows that
\begin{align*}
    \widetilde{\eta}(qs) =  (qr)^{-1} \frac{1-2xs+s^2}{1-2 r^{-1} q xs+ r^{-1} q^2 s^2} \left(\widetilde{\eta}(s)-\frac{1}{1-2xs+s^2}\right)
\end{align*}
and then proceeds as above.
\end{proof}
\section{Generators and relations for $\cB_\bc$}\label{sec:GenRel}
We are now ready to deduce the quantum Serre relations \eqref{eq:S=C-intro} for $\cB_\bc$. The cases (I) and (III) from Section \ref{sec:prev} where $i,j\in I\setminus X$ are treated in Sections \ref{sec:caseI} and \ref{sec:caseIII}, respectively, and are rather straightforward generalizations of the calculations in the quasi-split setting in \cite{a-CKY20}. The bulk of this section, Sections \ref{sec:jinX}--\ref{sec:caseII}, is devoted to the subtle case (II) where $i\in I\setminus X$ and $j\in X$ and which does not exist in the quasi-split setting.
\subsection{$\cM_X^+$-valued orthogonal polynomials}
\label{sec:o-p}
For a suitable choice of $b^2\in \cM_X^+$ the recursions \eqref{eq:wm-recursion} and \eqref{eq:wmn-recursion} translate into the recursions given in A) and B) in Section \ref{sec:statement}. Indeed, recall that $\cZ_i=q_ic_i \partial_{\tau(i)}^R(T_{w_X(E_{\tau(i)})})$ for $i\in I\setminus X$ and set
\begin{align*}
b_i^2= \frac{4}{(q_i-q_i^{-1})^2}\cZ_i\in \cM_X^+.
\end{align*}
For $b^2=b_i^2$ the rescaled continuous $q$-Hermite polynomials $w_m(x)$ defined for $m\in \N$ by 
\begin{align}\label{eq:iwm-def}
  w_m(x) = (b_i/2)^m H_m(\frac{x}{b_i};q_i^2) \in \field[b_i^2,x] \subset \cM_X^+[x]
\end{align}
satisfy the initial conditions and recursion A) in Section \ref{sec:statement}. Hence we get $w_m(x)=w_m(x,q_i^2)$. Similarly,
\begin{align}
\label{eq:ivm-def}
  v_m(x) =  (b_i/2)^m H_m(\frac{x}{b_i};q_i^{-2}) \in \field[b_i^2,x] \subset \cM_X^+[x]
\end{align}
satisfy the recursion A) in Section \ref{sec:statement} with $q_i^{2m}$ replaced by $q_i^{-2m}$. Hence we get $v_m(x)=w_m(x,q_i^{-2})$.
Finally, for $i\in I\setminus X$ and $j\in I$ the rescaled bivariate continuous $q$-Hermite polynomials 
\begin{align}\label{eq:ijwmn-def}
  w_{m,n}(x,y) = (b_i/2)^{m+n} H_{m,n}(\frac{x}{b_i},\frac{y}{b_i};q_i^2,q_i^{a_{ij}}) \in \field[b_i^2,x,y] \subset \cM_X^+[x,y]
\end{align}
satisfy the initial conditions and recursion B) in Section \ref{sec:statement}. Hence we get $w_{m,n}(x,y)=w_{m,n}(x,y;q_i^2,q_i^{a_{ij}})$.
By Lemma \ref{lem:wmn} we have
\begin{align}\label{eq:ijw0n}
  w_{0,n}(x,y)= w_n(y)
\end{align}
for all $n\in \N$. All through Section \ref{sec:GenRel} the notations $w_m(x)$, $v_m(x)$ and $w_{m,n}(x,y)$ will refer to the above special instances of the polynomials investigated in Section \ref{sec:orth-polyn}. In particular, we may freely use the results of Section \ref{sec:orth-polyn}. To keep notation short we will suppress the dependence on $i$ and $j$.

We will evaluate the polynomials $w_m(x)$, $v_m(x)$ and $w_{m,n}(x,y)$ on elements in the algebra $(\cA,\ast)$. To distinguish the different algebra structures on $\cA$ we introduce some notation. For any $u\in \cA$ and any $n\in \N$ we write
\begin{align*}
  u^{\ast n}= \underbrace{u\ast u\ast \dots \ast u}_{\mbox{$n$ factors}}.
\end{align*}  
For any polynomial $w(x)=\sum_{n}\lambda_n x^n\in \cA[x]$ and any $u\in \cA$ we write
\begin{align}\label{eq:wast-def}
  w(u)^\ast = \sum_n \lambda_n \ast u^{\ast n}.
\end{align}
For any polynomial $w(x,y)=\sum_{s,t}\lambda_{st} x^s y^t\in \cA[x,y]$ and any $a_1, a_2, a_3 \in \cA$ we write
\begin{align}\label{eq:insertion-def}
   a_3 \curvearrowright w(a_1 \stackrel{\ast}{,} a_2) = \sum_{s,t} a_1^{\ast s} \ast a_3 \ast a_2^{\ast t}\ast \lambda_{st}.
\end{align}
\begin{rema}\label{rem:coeff-right}
  Note that we write the coefficients $\lambda_{st}$ on the right side in \eqref{eq:insertion-def}. This will not be relevant in Section \ref{sec:caseI}  where we will consider expressions of the form $F_j \curvearrowright w_{m,n}(F_i\stackrel{\ast}{,}F_i)$ for $i,j\in I\setminus X$. Indeed, the coefficients of the rescaled bivariate continuous $q$-Hermite polynomials $w_{m,n}(x,y)$ only involve powers of $b_i^2$ which commutes with $F_k$ for all $k\in I\setminus X$.
  
  However, in Section \ref{sec:jinX} we will consider the insertion operation
  $F_j \curvearrowright w_{m,n}(F_i\stackrel{\ast}{,}F_i)$ for $i\in I\setminus X$ and $j\in X$. In this case $b_i^2$ does not commute with $F_j$ and it will turn out crucial to have the coefficients $\lambda_{st}$ on the right side in \eqref{eq:insertion-def}.
\end{rema}
\subsection{The powers of $F_i$ for $\tau(i)=i$}
Fix $i\in I\setminus X$ with $\tau(i)=i$. The following proposition expresses $F_i^n$ for $n\in \N$ in terms of the star product $\ast$ on $\cA$. Recall \eqref{eq:iwm-def}, \eqref{eq:wast-def} and the recursion \eqref{eq:wm-recursion}.
\begin{prop}\label{prop:Fim}
  Let $i\in I\setminus X$ with $\tau(i)=i$. The relation
  \begin{align}\label{eq:Fim}
     F_i^m = w_m(F_i)^\ast
  \end{align}
  holds in $\cA$ for any $m\in \N$.
\end{prop}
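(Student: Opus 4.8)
The plan is to prove \eqref{eq:Fim} by induction on $m$, converting ordinary powers of $F_i$ into the star recursion that defines the rescaled univariate continuous $q$-Hermite polynomials. The engine is the star product formula of Lemma \ref{lem:Fiastu} together with the value of the twisted skew-derivation computed in Lemma \ref{lem:pLiXFn}. Since $F_i\in\cR_X$ and $\cR_X$ is a subalgebra, we have $F_i^n\in\cR_X$ for all $n$, so Lemma \ref{lem:Fiastu} applies with $u=F_i^n$; here the assumption $\tau(i)=i$ gives $\Theta(\alpha_i)=-w_X(\alpha_i)$, so the twisted skew-derivation appearing in \eqref{eq:Fiastu} is exactly $\partial^L_{i,X}$.

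First I would establish the single key relation
\[
F_i \ast F_i^n = F_i^{n+1} + \frac{1-q_i^{2n}}{(q_i-q_i^{-1})^2}\,\cZ_i\, F_i^{n-1}, \qquad n\ge 0.
\]
To obtain it, substitute the value of $\partial^L_{i,X}(F_i^n)$ from \eqref{eq:pLiXFn} into \eqref{eq:Fiastu}. The two weight factors $K_{w_X(\alpha_i)-\alpha_i}$ (from Lemma \ref{lem:Fiastu}) and $K_{\alpha_i-w_X(\alpha_i)}$ (from \eqref{eq:pLiXFn}) cancel, and the two $q$-exponents collapse to $(\alpha_i,\alpha_i)=2d_i$, producing a single factor $q_i^2$. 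Using $\cZ_i=q_ic_iZ_i$ (valid since $\tau(i)=i$) together with $(n)_{q_i^2}=(1-q_i^{2n})/(1-q_i^2)$ and $1-q_i^2=-q_i(q_i-q_i^{-1})$, the entire coefficient simplifies to $\tfrac{1-q_i^{2n}}{(q_i-q_i^{-1})^2}\cZ_i$, with the sign as displayed.

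Next I would run the induction. The cases $m=0,1$ are immediate from $w_0(x)=1$ and $w_1(x)=x$. For the step, rewrite the key relation as $F_i^{m+1}=F_i\ast F_i^m-\tfrac{1-q_i^{2m}}{(q_i-q_i^{-1})^2}\cZ_i F_i^{m-1}$ and substitute the inductive hypotheses $F_i^m=w_m(F_i)^\ast$ and $F_i^{m-1}=w_{m-1}(F_i)^\ast$. Writing $w_m(x)=\sum_n \lambda_n x^n$ with $\lambda_n\in\field[b_i^2]\subset\cM_X^+$, every $\lambda_n$ commutes with $F_i$ in the ordinary product (because $E_jF_i=F_iE_j$ for $j\in X$, $i\in I\setminus X$), and by $0$-equivariance it star-commutes with $F_i$ as well. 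Hence $F_i\ast w_m(F_i)^\ast=\sum_n\lambda_n F_i^{\ast(n+1)}$, which is precisely the star-evaluation of the polynomial $x\,w_m(x)$. Comparing with the defining recursion \eqref{eq:wm-recursion} taken with $b^2=b_i^2=\tfrac{4}{(q_i-q_i^{-1})^2}\cZ_i$ identifies the right-hand side with $w_{m+1}(F_i)^\ast$, closing the induction.

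The \emph{main obstacle} is purely the constant bookkeeping in the key relation: one must verify that the prefactor from Lemma \ref{lem:Fiastu}, the $q$-power, and the $K_\beta$-weight supplied by \eqref{eq:pLiXFn} conspire to cancel exactly and reproduce $\tfrac{1-q_i^{2n}}{(q_i-q_i^{-1})^2}\cZ_i$ with the correct sign, so that it matches the recursion coefficient $\tfrac{b_i^2}{4}(1-q_i^{2m})$. The conceptual content — that the centrality of $\cZ_i$ relative to $F_i$ lets left star-multiplication by $F_i$ implement the variable $x$ on $\cM_X^+[x]$ — is then routine.
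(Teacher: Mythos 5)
Your proposal is correct and follows essentially the same route as the paper: induction on $m$, where the key relation $F_i\ast F_i^m = F_i^{m+1}+\tfrac{b_i^2}{4}(1-q_i^{2m})F_i^{m-1}$ is obtained by substituting Lemma \ref{lem:pLiXFn} into Lemma \ref{lem:Fiastu} (with the $K$-weights cancelling and the $q$-exponents collapsing to $q_i^2$), and the induction step is closed by matching against the recursion \eqref{eq:wm-recursion}. Your additional remarks — that $F_i^n\in\cR_X$ so Lemma \ref{lem:Fiastu} applies, and that the coefficients in $\field[b_i^2]\subset\cM_X^+$ commute (and star-commute) with $F_i$ so that $F_i\ast w_m(F_i)^\ast$ is the star-evaluation of $x\,w_m(x)$ — are points the paper leaves implicit, and they are verified correctly.
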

\begin{proof}
  The relation \eqref{eq:Fim} holds for $m=0$ and $m=1$. We proceed by induction on $m$. By Lemmas \ref{lem:Fiastu} and \ref{lem:pLiXFn} we have
  \begin{align*}
    F_i\ast F_i^m&\stackrel{\phantom{\eqref{eq:pLiXFn}}}{=} F_i^{m+1} - c_i \frac{q^{(\alpha_i,w_X(\alpha_i))}}{q_i-q_i^{-1}} K_{w_X(\alpha_i)-\alpha_i} \partial_{i,X}^L(F_i^{m})\\
    &\stackrel{\eqref{eq:pLiXFn}}{=} F_i^{m+1} - \frac{c_i q_i^2}{q_i-q_i^{-1}} (m)_{q_i^2} Z_i F_i^{m-1}\\
    &\stackrel{\phantom{\eqref{eq:pLiXFn}}}{=} F_i^{m+1} + \frac{b_i^2}{4} (1-q_i^{2m}) F_i^{m-1}.
  \end{align*}  
  Hence by induction hypothesis and \eqref{eq:wm-recursion} we get
  \begin{align*}
     F_i^{m+1} = F_i\ast w_m(F_i)^\ast - \frac{b_i^2}{4} (1-q_i^{2m}) w_{m-1}(F_i)^\ast = w_{m+1}(F_i)^\ast
  \end{align*}
  which completes the induction step.
\end{proof}  

\subsection{The quantum Serre relation for $\tau(i)=i\neq j$ where $i,j\in I\setminus X$}\label{sec:caseI}
We now want to rewrite the quantum Serre relation $S_{ij}(F_i,F_j)=0$ for $\tau(i)=i$ with $i,j\in I\setminus X$ in terms of the  star product on the algebra $\cA$. Recall the rescaled bivariate $q$-Hermite polynomials \eqref{eq:ijwmn-def}, the insertion operator defined by \eqref{eq:insertion-def} and the recursion \eqref{eq:wmn-recursion}.
\begin{prop}\label{prop:FimFjFin}
  Let $i,j\in I\setminus X$ with $\tau(i)=i\neq j$. Then the relation
  \begin{align}\label{eq:FimFjFin}
    F_i^m F_j F_i^n = F_j \curvearrowright w_{m,n}(F_i\stackrel{\ast}{,}F_i)
  \end{align}
  holds in $\cA$ for any $m,n\in \N$.
\end{prop}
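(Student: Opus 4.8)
The plan is to follow the strategy of Proposition \ref{prop:Fim}: prove \eqref{eq:FimFjFin} by induction on $m$, establishing the identity for all $n$ at each stage, by showing that both sides satisfy the same recursion derived from \eqref{eq:wmn-recursion}. Write $P_{m,n}=F_j\curvearrowright w_{m,n}(F_i\stackrel{\ast}{,}F_i)$. The first step is to record the recursion for the right-hand side. Applying $F_i\ast(-)$ to the insertion \eqref{eq:insertion-def} simply raises the left star-power, so $F_i\ast P_{m,n}=F_j\curvearrowright (x\,w_{m,n})(F_i\stackrel{\ast}{,}F_i)$, and feeding in \eqref{eq:wmn-recursion} (with $r=q_i^{a_{ij}}$) yields
\[
P_{m+1,n}=F_i\ast P_{m,n}-\tfrac{b_i^2}{4}(1-q_i^{2m})P_{m-1,n}-\tfrac{b_i^2}{4}q_i^{2m}(1-q_i^{2n})q_i^{a_{ij}}P_{m,n-1}.
\]
Here it is essential that $b_i^2=\tfrac{4}{(q_i-q_i^{-1})^2}\cZ_i$ lies in $\cM_X^+\subset\cA_0$ and is star-central with respect to $F_i$ and $F_j$ (since $\cZ_i$ commutes with the $B_\ell$, this transfers through the Letzter map via Lemma \ref{lem:psi-props}.(2)), so that the scalar coefficients may be pulled freely across the insertion operator; this is precisely why the coefficients are written on the right in \eqref{eq:insertion-def}.

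The core computation is the matching recursion for the left-hand side. Since $F_i^mF_jF_i^n\in\cR_X$, Lemma \ref{lem:Fiastu} applied with $\tau(i)=i$ gives
\[
F_i\ast(F_i^mF_jF_i^n)=F_i^{m+1}F_jF_i^n-c_i\frac{q^{(\alpha_i,w_X(\alpha_i))}}{q_i-q_i^{-1}}K_{w_X(\alpha_i)-\alpha_i}\,\partial^L_{i,X}(F_i^mF_jF_i^n).
\]
I would expand $\partial^L_{i,X}(F_i^mF_jF_i^n)$ with the skew-derivation rule \eqref{eq:partialLX} into three summands, governed by $\partial^L_{i,X}(F_i^m)$, $\partial^L_{i,X}(F_j)$ and $\partial^L_{i,X}(F_i^n)$. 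The two outer terms are computed by Lemma \ref{lem:pLiXFn}; after the $K$-factors cancel against $K_{w_X(\alpha_i)-\alpha_i}$ and the $q$-powers are collected (using $\cZ_i=q_ic_iZ_i$ and that $Z_i$ commutes with $F_i,F_j$ by \eqref{eq:ZiFj}), they reproduce exactly the coefficients $\tfrac{b_i^2}{4}(1-q_i^{2m})$ on $F_i^{m-1}F_jF_i^n$ and $\tfrac{b_i^2}{4}q_i^{2m}(1-q_i^{2n})q_i^{a_{ij}}$ on $F_i^mF_jF_i^{n-1}$. Tracking these prefactors carefully is the routine bookkeeping; the point is that they coincide with those in the recursion for $P_{m,n}$.

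The \emph{main obstacle}, and the one point that really must be verified, is that the middle summand $\partial^L_{i,X}(F_j)$ vanishes, since otherwise the two recursions would not agree. I would prove the general fact $\partial^L_{\ell,X}(F_i)=0$ for distinct $\ell,i\in I\setminus X$ by a weight argument: $T_{w_X}^{-1}(F_i)$ has weight $-w_X(\alpha_i)$ with $w_X(\alpha_i)\in\alpha_i+Q_X^+$, so $\partial^L_\ell$ would raise it to a weight whose $\alpha_\ell$-coefficient is $-1$, which cannot lie in $-Q^+$; hence $\partial^L_\ell(T_{w_X}^{-1}(F_i))=0$ by \eqref{eq:T_i-equivalence}, and applying $T_{w_X}$ gives the claim. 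With $\ell=i\neq j$ this kills the cross term. Combining the displayed left-hand recursion with the induction hypothesis $F_i^{m'}F_jF_i^{n'}=P_{m',n'}$ for $m'\le m$, and comparing against the recursion for $P_{m,n}$, forces $F_i^{m+1}F_jF_i^n=P_{m+1,n}$.

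It remains to handle the base case $m=0$, namely $P_{0,n}=F_jF_i^n$. Using $w_{0,n}(x,y)=w_n(y)$ and $F_i^n=w_n(F_i)^\ast$ from Proposition \ref{prop:Fim}, together with the star-centrality of the coefficients, one checks that $P_{0,n}=F_j\ast F_i^n$. Since $F_i^n\in\cR_X$, Lemma \ref{lem:Fiastu} reduces the equality $F_j\ast F_i^n=F_jF_i^n$ to the vanishing $\partial^L_{\tau(j),X}(F_i^n)=0$; by \eqref{eq:partialLX} this follows from $\partial^L_{\tau(j),X}(F_i)=0$, which is again the weight argument above, valid because $\tau(j)\neq i$ (as $\tau(i)=i$ and $i\neq j$ force $\tau(j)\neq i$). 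This establishes the base case and completes the induction.
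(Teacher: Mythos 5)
Your proposal is correct and follows essentially the same route as the paper's proof: induction on $m$, with the induction step obtained by computing $F_i\ast(F_i^mF_jF_i^n)$ via Lemma \ref{lem:Fiastu}, the skew-derivation rule \eqref{eq:partialLX} and Lemma \ref{lem:pLiXFn}, and then matching the result against the recursion \eqref{eq:wmn-recursion}. The only difference is that you make explicit two points the paper leaves implicit --- the weight-argument vanishing $\partial^L_{i,X}(F_j)=0$ killing the cross term, and the base-case verification $F_j\ast F_i^n=F_jF_i^n$ via $\partial^L_{\tau(j),X}(F_i^n)=0$ --- both of which are exactly what is needed.
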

\begin{proof}
  We prove Equation \eqref{eq:FimFjFin} by induction on $m$. For $m=0$ the relation holds by Proposition \ref{prop:Fim} and Equation \eqref{eq:ijw0n}. 
Now fix $m,n\in \N$. Equations \eqref{eq:Fiastu}, \eqref{eq:partialLX} and \eqref{eq:pLiXFn} imply that
  \begin{align}
    F_i\ast F_i^mF_jF_i^n &=  F_i^{m+1} F_j F_i^n - \frac{c_i q^{(\alpha_i,w_X(\alpha_i))}}{q_i-q_i^{-1}} K_{w_X(\alpha_i)-\alpha_i} \partial^L_{i,X}(F_i^mF_jF_i^n)\nonumber\\
    &=  F_i^{m+1} F_j F_i^n - \frac{c_i q^{(\alpha_i,\alpha_i)}}{q_i-q_i^{-1}} (m)_{q_i^2} Z_i F_i^{m-1}F_j F_i^n \nonumber\\
    &\qquad \qquad \qquad  -  \frac{c_i q^{(\alpha_i,(m+1)\alpha_i+\alpha_j)}}{q_i-q_i^{-1}} (n)_{q_i^2} Z_i F_i^{m}F_j F_i^{n-1} \nonumber\\
    &=  F_i^{m+1} F_j F_i^n + \frac{b_i^2}{4} (1-q_i^{2m}) F_i^{m-1}F_j F_i^n \label{eq:Fiast}\\
    &\qquad \qquad \qquad\qquad   + \frac{b_i^2}{4}   q_i^{2m+a_{ij}}(1-q_i^{2n}) F_i^{m}F_j F_i^{n-1} \nonumber
  \end{align}
  Hence by induction hypothesis and \eqref{eq:wmn-recursion} for $r=q^{a_{ij}}$ we get
  \begin{align*}
    F_i^{m+1}F_j F_i^n &= F_i\ast \big( F_j \curvearrowright w_{m,n}(F_i\stackrel{\ast}{,}F_i)\big) - \frac{b_i^2}{4}(1{-}q_i^{2m})F_j \curvearrowright w_{m-1,n}(F_i\stackrel{\ast}{,}F_i)\\
    & \qquad \qquad \qquad -\frac{b_i^2}{4} q_i^{2m+a_{ij}}(1{-}q_i^{2n})F_j \curvearrowright w_{m,n-1}(F_i\stackrel{\ast}{,}F_i)\\
    &= F_j\curvearrowright \Big((x\cdot w_{m,n} - \frac{b_i^2}{4}(1{-}q_i^{2m}) w_{m-1,n}\\
    &\qquad \qquad \qquad\qquad\qquad   -\frac{b_i^2}{4} q_i^{2m+a_{ij}}(1{-}q_i^{2n}) w_{m,n-1} \Big)(F_i\stackrel{\ast}{,}F_i)\\
    &=F_j \curvearrowright w_{m+1,n}(F_i\stackrel{\ast}{,}F_i)
  \end{align*}
  which completes the induction step.
\end{proof}
With the above proposition we are able to express the quantum Serre relation $S_{ij}(F_i,F_j)=0$ in $\cA$ in terms of the star product. 
Recall \eqref{eq:iwm-def}--\eqref{eq:ijwmn-def}. 
\begin{thm} \label{thm:rel-I}
  Let $i,j\in I\setminus X$ with $\tau(i)=i\neq j$. Then the relation
  \begin{align*}
    \sum_{n=0}^{1-a_{ij}}(-1)^n \begin{bmatrix} 1-a_{ij}\\ n \end{bmatrix}_{q_i}
    F_j \curvearrowright w_{1-a_{ij}-n,n}(F_i \stackrel{\ast}{,}F_i) =0
  \end{align*}
  holds in the algebra $(\cA,\ast)$. This relation can be rewritten as
  \begin{align*}
    \sum_{n=0}^{1-a_{ij}}(-1)^n \begin{bmatrix} 1-a_{ij}\\ n \end{bmatrix}_{q_i}
    w_{1-a_{ij}-n}(F_i)^\ast \ast F_j \ast v_{n}(F_i)^\ast =0.
  \end{align*}
\end{thm}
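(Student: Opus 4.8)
The plan is to derive both displayed relations from the ordinary quantum Serre relation, using the two structural results already available: Proposition \ref{prop:FimFjFin}, which rewrites the monomials $F_i^m F_j F_i^n$ via the insertion operator, and Proposition \ref{prop:Serre-bi-uni}, which factors the Serre combination of bivariate $q$-Hermite polynomials into univariate pieces.

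First I would establish the first relation. Since $F_i, F_j \in \cA$ and $\cA$ is a subalgebra of $U$, the quantum Serre relation $S_{ij}(F_i, F_j) = 0$ from \eqref{eq:qSerre} holds inside $\cA$ with its ordinary product. Expanding $S_{ij}$ and applying Proposition \ref{prop:FimFjFin} to each summand $F_i^{1-a_{ij}-n} F_j F_i^n = F_j \curvearrowright w_{1-a_{ij}-n,n}(F_i \stackrel{\ast}{,} F_i)$ immediately yields
\begin{align*}
0 = S_{ij}(F_i, F_j) = \sum_{n=0}^{1-a_{ij}}(-1)^n \begin{bmatrix} 1-a_{ij}\\ n \end{bmatrix}_{q_i} F_j \curvearrowright w_{1-a_{ij}-n,n}(F_i \stackrel{\ast}{,} F_i),
\end{align*}
which is the first claim.

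For the second relation I would invoke Proposition \ref{prop:Serre-bi-uni} with $\cM = \cM_X^+$, base variable $q_i$, and $a = a_{ij}$; note that $a_{ij} \le 0$ for $i \neq j$ so $a \in -\N$ as required, and $r = q_i^{a_{ij}}$ matches the normalization of \eqref{eq:ijwmn-def}. The proposition gives the polynomial identity
\begin{align*}
\sum_{n=0}^{1-a_{ij}} (-1)^n \begin{bmatrix} 1-a_{ij}\\ n \end{bmatrix}_{q_i} w_{1-a_{ij}-n,n}(x,y) = \sum_{n=0}^{1-a_{ij}} (-1)^n \begin{bmatrix} 1-a_{ij}\\ n \end{bmatrix}_{q_i} w_{1-a_{ij}-n}(x)\, v_n(y)
\end{align*}
in $\cM_X^+[x,y]$. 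Since the insertion operator $F_j \curvearrowright (\,\cdot\,)(F_i \stackrel{\ast}{,} F_i)$ depends only on the coefficients of its polynomial argument and is $\field$-linear, applying it to both sides produces equal elements of $(\cA, \ast)$; the left side is the vanishing expression from the first relation, so it remains to identify the right side with the univariate star-product expression.

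The one point requiring care, and the main (if modest) obstacle, is the translation of the insertion of a factored polynomial. For each $n$ the term $w_{1-a_{ij}-n}(x)\,v_n(y)$ has coefficients that are polynomials in $b_i^2 = \tfrac{4}{(q_i-q_i^{-1})^2}\cZ_i \in \cM_X^+$. By \eqref{eq:ZiFj} the element $\cZ_i$ commutes with every $F_k$ for $k \in I\setminus X$, in particular with both $F_i$ and $F_j$; and since these coefficients lie in $\cA_0$, $\ast$-multiplication by them coincides with ordinary multiplication by $0$-equivariance. Hence the ordering subtlety flagged in Remark \ref{rem:coeff-right} does not arise here, and I obtain
\begin{align*}
F_j \curvearrowright \big(w_{1-a_{ij}-n}(x)\,v_n(y)\big)(F_i \stackrel{\ast}{,} F_i) = w_{1-a_{ij}-n}(F_i)^\ast \ast F_j \ast v_n(F_i)^\ast.
\end{align*}
Summing over $n$ against the same coefficients then gives the second relation, completing the argument.
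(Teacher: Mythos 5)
Your proposal is correct and follows the paper's own proof exactly: the first relation is deduced from the ordinary quantum Serre relation $S_{ij}(F_i,F_j)=0$ in $\cA$ together with Proposition \ref{prop:FimFjFin}, and the second from Proposition \ref{prop:Serre-bi-uni} with $a=a_{ij}$. The only difference is that you spell out the coefficient-moving step (that powers of $b_i^2$ lie in $\cA_0$ and commute with $F_i$ and $F_j$, so the insertion of the factored polynomial equals the star-product sandwich), which the paper leaves implicit, merely flagging it in Remark \ref{rem:coeff-right}.
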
  
\begin{proof}
  The first statement is a direct consequence of the quantum Serre relation $S_{ij}(F_i,F_j)=0$ and Proposition \ref{prop:FimFjFin}. The second statement then follows from Proposition \ref{prop:Serre-bi-uni} with $a=a_{ij}$.
\end{proof}  
In view of the isomorphism $\psi:\cB_\bc\rightarrow (\cA,\ast)$, Theorem \ref{thm:rel-I} proves case (I) of Theorem \ref{thm:intro}.
\subsection{A recursive formula in the case $\tau(i)=i\neq j$ where $i\in I\setminus X$ and $j\in X$}\label{sec:jinX}
Assume that $i\in I\setminus X$ with $\tau(i)=i$ and $j\in X$. Recall from \eqref{eq:cZi} that in this case we write
\begin{align}
  &\cZ_i=c_iq_i Z_i=c_iq_i \partial_i^R(T_{w_X}(E_i)),\label{eq:Zi-cal}
\end{align}
Moreover, we set
\begin{align}
   d_{ij}&=\partial_j^R(\cZ_i)K_j, \qquad \widetilde{d_{ij}}=K_j^{-1}\partial_j^L(\cZ_i)\label{eq:dij}
\end{align}
so that Equation \eqref{eq:FxxF} now reads
\begin{align}\label{eq:FjZi}
  [F_j,\cZ_i] = \frac{\widetilde{d_{ij}}-d_{ij}}{q_j-q_j^{-1}}.
\end{align}
Recall from Section \ref{sec:anti-iso} that it is convenient to simultaneously consider the parameters $\bc=(c_i)_{i\in I\setminus X}\in \cC$ and $\bc'=(c_i')_{i\in I\setminus X}\in \cC$ related by \eqref{eq:c'-def}. We write $\cZ_i'$, $d'_{ij}$, $\widetilde{d'_{ij}}$ to denote the elements \eqref{eq:Zi-cal} and \eqref{eq:dij} corresponding to the parameter family $\bc'$. It follows from \cite[Lemma 2.9]{a-BalaKolb15} and \cite[Theorem 4.1]{a-BW18p} that
  \begin{align}\label{eq:cZibar}
     \overline{\cZ_i}=\overline{c_i} q_i (-1)^{2\alpha_i(\rho_X^\vee)}q^{(\alpha_i,\Theta(\alpha_i)-2\rho_X)} \partial_i^R(T_{w_X}(E_i))=\cZ'_i.
  \end{align}
  Moreover, we have $d_{ij}\in K_j\cM_X^+$ and $\widetilde{d_{ij}}\in K_j^{-1}\cM_X^+$. Hence, applying the bar involution to Equation \eqref{eq:FjZi} gives us
  \begin{align*}
    \overline{\widetilde{d_{ij}}}=d'_{ij}, \qquad \overline{d_{ij}}=\widetilde{d'_{ij}}.
  \end{align*}
These relations will be used in the proof of Theorem \ref{thm:caseII}.
  
The following lemma provides a formula similar to \eqref{eq:Fiast} in the present setting.
\begin{lem}\label{lem:Fiast-j}
  Let $i\in I\setminus X$ with $\tau(i)=i$ and $j\in X$. Then the relation
  \begin{align}
      F_i\ast &F_i^mF_j F_i^n
  = F_i^{m+1} F_j F_i^n \label{eq:Fiast-j}\\
   &\qquad + (1{-}q_i^{2m})  F_i^{m-1} F_j F_i^n \frac{b_i^2}{4} 
    + q_i^{2 m + a_{ij}} (1{-}q_i^{2n}) F_i^m F_j F_i^{n-1}\frac{b_i^2}{4} \nonumber\\
    &+ \frac{q_i^{(m-1)a_{ij}} (1{-}q_i^{2m})}{\lambda_{ij}}  d_{ij} F_i^{m+n-1}
    -\frac{q_i^{-(m-1)a_{ij}} (1{-}q_i^{2(m+n)})}{\lambda_{ij}}  \widetilde{d_{ij}} F_i^{m+n-1}\nonumber
    \end{align}
holds in $\cA$ for all $m,n\in \N$ with $b_i^2=4 \cZ_i/(q_i-q_i^{-1})^2$ and $\lambda_{ij}=(q_i-q_i^{-1})^2(q_j-q_j^{-1})$.
\end{lem}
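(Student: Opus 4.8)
The plan is to compute the left-hand side of \eqref{eq:Fiast-j} by reducing it to star products of elements of $\cR_X$, to which Lemma \ref{lem:Fiastu} applies. The obstacle to directly imitating the proof of Proposition \ref{prop:FimFjFin} is that for $j\in X$ the element $F_i^mF_jF_i^n$ does not lie in $\cR_X$, so Lemma \ref{lem:Fiastu} cannot be applied to it verbatim. To circumvent this I would first split off the $\cH$-factor $F_j$ from the $\cR_X$-factors. Using the identity $\ad_r(F_j)(u)=uF_j-q^{-(\alpha_j,\mu)}F_ju$ for $u\in(\cR_X)_{-\mu}$ recorded in the proof of Lemma \ref{lem:pLiXadFu} (which for $u=F_i^n$ reads $\ad_r(F_j)(F_i^n)=F_i^nF_j-q_i^{-na_{ij}}F_jF_i^n$), one rewrites
\begin{align*}
 F_i^mF_jF_i^n=q_i^{na_{ij}}\big(F_i^{m+n}F_j-F_i^m\ad_r(F_j)(F_i^n)\big),
\end{align*}
where $F_i^{m+n}\in\cR_X$, the factor $F_i^m\ad_r(F_j)(F_i^n)\in\cR_X$ as a product of elements of $\cR_X$, and $F_j\in\cH=\cA_0$. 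Since the star product is $0$-equivariant, $r\ast F_j=rF_j$ for $r\in\cA$, and hence by associativity
\begin{align*}
 F_i\ast(F_i^mF_jF_i^n)=q_i^{na_{ij}}(F_i\ast F_i^{m+n})F_j-q_i^{na_{ij}}\,F_i\ast\big(F_i^m\ad_r(F_j)(F_i^n)\big),
\end{align*}
so both $\ast$-products on the right now act on elements of $\cR_X$.

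Next I would evaluate the two pieces. For the first, the computation in the proof of Proposition \ref{prop:Fim} gives $F_i\ast F_i^{m+n}=F_i^{m+n+1}+\frac{b_i^2}{4}(1-q_i^{2(m+n)})F_i^{m+n-1}$. For the second, Lemma \ref{lem:Fiastu} reduces matters to $\partial_{i,X}^L(F_i^m\ad_r(F_j)(F_i^n))$, which I would expand by the skew-derivation rule \eqref{eq:partialLX} as
\begin{align*}
 \partial_{i,X}^L(F_i^m)\,\ad_r(F_j)(F_i^n)+q^{(w_X(\alpha_i),m\alpha_i)}F_i^m\,\partial_{i,X}^L\big(\ad_r(F_j)(F_i^n)\big),
\end{align*}
and then substitute the closed formulas of Lemma \ref{lem:pLiXFn} for $\partial_{i,X}^L(F_i^m)$ and of Lemma \ref{lem:pLiXadFFn} for $\partial_{i,X}^L(\ad_r(F_j)(F_i^n))$. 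The term of Lemma \ref{lem:pLiXadFFn} carrying $\partial_j^R(Z_i)$, after using $\cZ_i=c_iq_iZ_i$ from \eqref{eq:Zi-cal} and the definition \eqref{eq:dij}, is what produces the $d_{ij}$-contribution; its second term reintroduces $\ad_r(F_j)(F_i^{n-1})$, which will feed the $F_i^mF_jF_i^{n-1}$ piece.

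Finally I would reassemble everything back into the basis $F_i^aF_jF_i^b$ by applying the $\ad_r$-identity in reverse for the relevant powers. The leading terms $F_i^{m+n+1}F_j$ arising from the two pieces cancel, leaving $F_i^{m+1}F_jF_i^n$; the $\frac{b_i^2}{4}$-terms reorganize into $(1-q_i^{2m})F_i^{m-1}F_jF_i^n\frac{b_i^2}{4}$ and $q_i^{2m+a_{ij}}(1-q_i^{2n})F_i^mF_jF_i^{n-1}\frac{b_i^2}{4}$, reproducing the first three terms of \eqref{eq:Fiast-j} exactly as in \eqref{eq:Fiast}, provided one consistently keeps $\frac{b_i^2}{4}\propto\cZ_i$ on the right, as emphasized in Remark \ref{rem:coeff-right}. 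The extra corrections appear precisely because $\cZ_i$ no longer commutes with $F_j$: moving $\cZ_i$ past $F_j$ to place it on the right invokes \eqref{eq:FxxF}, i.e. the commutator \eqref{eq:FjZi} $[F_j,\cZ_i]=(q_j-q_j^{-1})^{-1}(\widetilde{d_{ij}}-d_{ij})$, which supplies both the $d_{ij}$- and the $\widetilde{d_{ij}}$-terms.

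The main obstacle will be the organizational bookkeeping of this reassembly: tracking the $q$-power prefactors and the $K_j^{\pm1}$-weights so that the two correction terms emerge with the exact coefficients $q_i^{(m-1)a_{ij}}(1-q_i^{2m})/\lambda_{ij}$ and $-q_i^{-(m-1)a_{ij}}(1-q_i^{2(m+n)})/\lambda_{ij}$. In particular, separating the $\widetilde{d_{ij}}$-contribution (generated only through the commutator \eqref{eq:FjZi} when $\cZ_i$ is pushed past $F_j$) from the $d_{ij}$-contribution (appearing both from \eqref{eq:FjZi} and directly from the $\partial_j^R(Z_i)$-term of Lemma \ref{lem:pLiXadFFn}) is cleanly controlled by comparing $K$-weights, since $d_{ij}\in K_j\cM_X^+$ and $\widetilde{d_{ij}}\in K_j^{-1}\cM_X^+$ lie in distinct weight spaces.
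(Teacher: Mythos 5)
Your proposal is correct and follows essentially the same route as the paper's own proof: the paper likewise splits $F_i^mF_jF_i^n$ via \eqref{eq:adr-FjFin}, applies Lemma \ref{lem:Fiastu} to the resulting $\cR_X$-factors, expands with the skew-derivation rule \eqref{eq:partialLX} together with Lemmas \ref{lem:pLiXFn} and \ref{lem:pLiXadFFn}, reassembles using \eqref{eq:adr-FjFin} in reverse, and finally uses the commutator \eqref{eq:FjZi} to move $\cZ_i$ to the right of $F_j$, which is exactly where the $d_{ij}$- and $\widetilde{d_{ij}}$-terms arise.
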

\begin{proof}
 For all $m,n\in \N$ we have
  \begin{align}\label{eq:adr-FjFin}
  F_i^m F_j F_i^n=  q^{n(\alpha_j,\alpha_i)} F_i^{m+n} F_j - q^{n(\alpha_j,\alpha_i)}  F_i^m \ad_r(F_j)(F_i^n)
\end{align}
and hence Equation \eqref{eq:Fiastu} implies
\begin{align}
  F_i\ast F_i^mF_j F_i^n =& F_i^{m+1} F_j F_i^n - \frac{c_i q^{(\alpha_i,w_X(\alpha_i)+n\alpha_j)}}{q_i-q_i^{-1}} K_{w_X(\alpha_i)-\alpha_i} \partial_{i,X}^L(F_i^{m+n}) F_j\\
  &+  \frac{c_i q^{(\alpha_i,w_X(\alpha_i)+n\alpha_j)}}{q_i-q_i^{-1}} K_{w_X(\alpha_i)-\alpha_i}  \partial_{i,X}^L\big(F_i^m \ad_r(F_j)(F_i^n)\big).\nonumber
\end{align}
In view of the skew-derivation property \eqref{eq:partialLX}, Equations \eqref{eq:pLiXFn}, \eqref{eq:partialLiX-adrFjFin} allow us to rewrite the above as
\begin{align*}
   F_i\ast F_i^mF_j F_i^n
  =& F_i^{m+1} F_j F_i^n - \frac{c_i q^{(\alpha_i,\alpha_i+n\alpha_j)}}{q_i-q_i^{-1}} (m+n)_{q_i^2} Z_i F_i^{m+n-1} F_j\\
  &+ \frac{c_i q^{(\alpha_i,\alpha_i+n\alpha_j)}}{q_i-q_i^{-1}} (m)_{q_i^2} Z_i F_i^{m-1} \ad_r(F_j)(F_i^n)\\
  &+\frac{c_i q^{(m+1)(\alpha_i,\alpha_i+\alpha_j)-(\alpha_j,w_X(\alpha_i)-\alpha_i-\alpha_j)}}{(q_i-q_i^{-1})(q_j-q_j^{-1})} (n)_{q_i^2} K_j \partial_j^R(Z_i) F_i^{m+n-1}\\
  &+ \frac{c_i q^{(\alpha_i,(m+1)\alpha_i+n\alpha_j)}}{q_i-q_i^{-1}} (n)_{q_i^2} Z_i F_i^m \ad_r(F_j)(F_i^{n-1}).
\end{align*}
Using the relation $(m+n)_{q_i^2}=(m)_{q_i^2} + q_i^{2m}(n)_{q_i^2}$ and Equation \eqref{eq:adr-FjFin} for the third and fifth term, we obtain
\begin{align*}
    &F_i\ast F_i^mF_j F_i^n
    = F_i^{m+1} F_j F_i^n - \frac{c_i q_i^2}{q_i-q_i^{-1}}\Big( (m)_{q_i^2} Z_i  F_i^{m-1} F_j F_i^n + \\
    &\qquad + q_i^{2 m + a_{ij}} (n)_{q_i^2} Z_i F_i^m F_j F_i^{n-1}\Big)
     +\frac{c_i q_i^{(2m+2)-(n-2)a_{ij}}}{(q_i-q_i^{-1})(q_j-q_j^{-1})} (n)_{q_i^2}  F_i^{m+n-1} \partial_j^R(Z_i) K_j.
\end{align*}
Using the notation \eqref{eq:Zi-cal}, \eqref{eq:dij} and the commutation relation \eqref{eq:FjZi} one transforms the above equation into Equation \eqref{eq:Fiast-j}.
\end{proof}
Recall the insertion operation defined by \eqref{eq:insertion-def} and the rescaled bivariate continuous $q$-Hermite polynomials $w_{m,n}(x,y)$
defined by \eqref{eq:ijwmn-def}.
The recursion \eqref{eq:Fiast-j} implies that there exist polynomials $\rho_{m,n}(x), \sigma_{m,n}(x)\in \cM_X^+[x]$ such that
\begin{align}\label{eq:ansatz}
  F_i^m F_j F_i^n = F_j\curvearrowright w_{m,n}(F_i\stackrel{\ast}{,}F_i) + d_{ij} \rho_{m,n}(F_i)^\ast + \widetilde{d_{ij}} \sigma_{m,n}(F_i)^\ast.
\end{align}
For $m=0$ we have $F_jF_i^n=F_j\curvearrowright w_n(F_i)^\ast$ and hence $\rho_{0,n}(x)=\sigma_{0,n}(x)=0$ for all $n\in \N$. We can translate the recursion \eqref{eq:Fiast-j} into recursive formulas for $\rho_{m,n}(x)$ and $\sigma_{m,n}(x)$.
\begin{lem}\label{lem:rho-sigma-recursion}
  With the ansatz \eqref{eq:ansatz} the recursion \eqref{eq:Fiast-j} is equivalent to the recursions
  \begin{align}
    q_i^{a_{ij}} x&\rho_{m,n}(x)= \rho_{m+1,n}(x) +(1{-}q_i^{2m})\rho_{m-1,n}(x) \frac{b_i^2}{4}\label{eq:rhomn-recursion}\\
    &+ (1{-}q_i^{2n})q_i^{2m+a_{ij}} \rho_{m,n-1}(x) \frac{b_i^2}{4}
    + \frac{q_i^{(m-1)a_{ij}}(1{-}q_i^{2m})}{\lambda_{ij}} w_{m+n-1}(x),\nonumber\\
    q_i^{-a_{ij}} x&\sigma_{m,n}(x)= \sigma_{m+1,n}(x)   +(1{-}q_i^{2m}) \sigma_{m-1,n}(x) \frac{b_i^2}{4}\nonumber\\
    &+ (1{-}q_i^{2n})q_i^{2m+a_{ij}} \sigma_{m,n-1}(x) \frac{b_i^2}{4}
    - \frac{q_i^{-(m-1)a_{ij}}(1{-}q_i^{2(m+n)})}{\lambda_{ij}} w_{m+n-1}(x) \nonumber
  \end{align}
  for all $m,n\in \N$.
\end{lem}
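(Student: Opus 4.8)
The plan is to substitute the ansatz \eqref{eq:ansatz} into both sides of the recursion \eqref{eq:Fiast-j} and then to read off the two asserted recursions by comparing, separately, the coefficients of the three families of elements $F_j\curvearrowright(\,\cdot\,)(F_i\stackrel{\ast}{,}F_i)$, $d_{ij}(\,\cdot\,)(F_i)^\ast$ and $\widetilde{d_{ij}}(\,\cdot\,)(F_i)^\ast$, where the arguments range over polynomials in $\cM_X^+[x]$ with coefficients written on the right as in \eqref{eq:insertion-def}.

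First I would compute the left-hand side $F_i\ast F_i^mF_jF_i^n$ from the ansatz, using three facts that make it transparent. Since $F_i\ast(-)$ prepends a star-factor $F_i$ on the far left, associativity gives $F_i\ast\big(F_j\curvearrowright w(F_i\stackrel{\ast}{,}F_i)\big)=F_j\curvearrowright(xw)(F_i\stackrel{\ast}{,}F_i)$ for any $w\in\cM_X^+[x,y]$. Next, $d_{ij}\in K_j\cM_X^+$ and $\widetilde{d_{ij}}\in K_j^{-1}\cM_X^+$ lie in $\cA_0=\cH$, so by $0$-equivariance of $\ast$ together with $F_i\,\partial_j^{R}(\cZ_i)=\partial_j^{R}(\cZ_i)\,F_i$ and $F_iK_j^{\pm 1}=q_i^{\pm a_{ij}}K_j^{\pm 1}F_i$ one obtains $F_i\ast(d_{ij}u)=q_i^{a_{ij}}d_{ij}(F_i\ast u)$ and $F_i\ast(\widetilde{d_{ij}}u)=q_i^{-a_{ij}}\widetilde{d_{ij}}(F_i\ast u)$ for all $u\in\cA$. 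Applying these to $u=\rho_{m,n}(F_i)^\ast$ and $u=\sigma_{m,n}(F_i)^\ast$, and using that $b_i^2\in\cM_X^+$ commutes with $F_i$, the left-hand side becomes $F_j\curvearrowright(xw_{m,n})(F_i\stackrel{\ast}{,}F_i)+q_i^{a_{ij}}d_{ij}(x\rho_{m,n})(F_i)^\ast+q_i^{-a_{ij}}\widetilde{d_{ij}}(x\sigma_{m,n})(F_i)^\ast$. Finally, $F_i^{m+n-1}=w_{m+n-1}(F_i)^\ast$ by Proposition \ref{prop:Fim}, which rewrites the last two terms $T_4$ and $T_5$ of \eqref{eq:Fiast-j} with the polynomial $w_{m+n-1}$ sitting in the $d_{ij}$- and $\widetilde{d_{ij}}$-slots. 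Substituting the ansatz for the three monomials $F_i^{m\pm 1}F_jF_i^n$ and $F_i^mF_jF_i^{n-1}$ on the right of \eqref{eq:Fiast-j} — moving each factor $b_i^2/4$ to the right through the $F_i$-powers, which is legitimate since $b_i^2$ commutes with $F_i$ and the coefficients sit on the right — and comparing the three families term by term then gives: the $F_j\curvearrowright(\,\cdot\,)$-coefficients reproduce the bivariate recursion \eqref{eq:wmn-recursion} with $r=q_i^{a_{ij}}$, which already holds; the $d_{ij}(\,\cdot\,)^\ast$-coefficients give the first recursion of the lemma, its inhomogeneous term $\tfrac{q_i^{(m-1)a_{ij}}(1-q_i^{2m})}{\lambda_{ij}}w_{m+n-1}$ coming from $T_4$; and the $\widetilde{d_{ij}}(\,\cdot\,)^\ast$-coefficients give the second recursion, the sign and the exponent $q_i^{-(m-1)a_{ij}}$ in its inhomogeneous term coming from $T_5$ together with the factor $q_i^{-a_{ij}}$ produced above.

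The one delicate point — and the main obstacle — is to justify that this coefficient comparison is valid, i.e.\ that the three families are linearly independent in $\cA$, equivalently that the representation \eqref{eq:ansatz} is unique; this is also what upgrades the computation into the asserted \emph{equivalence}. Conceptually, independence should follow from the triangular decomposition \eqref{eq:A-triang} of $\cA$: after normal-ordering, the three families are distinguished within $\cM_X^+\ot U^0_\Theta\ot U^-$ by the distinct factors $1$, $K_j$ and $K_j^{-1}$ carried by $F_j\curvearrowright(\cdots)$, $d_{ij}(\cdots)$ and $\widetilde{d_{ij}}(\cdots)$. To keep the argument self-contained I would, however, prefer to establish only the backward implication directly, which is all that is needed in the sequel: \emph{defining} $\rho_{m,n},\sigma_{m,n}$ by the two stated recursions together with the initial data $\rho_{0,n}=\sigma_{0,n}=0$, a straightforward induction on $m$ based on \eqref{eq:Fiast-j} and the three computational facts above shows that the ansatz \eqref{eq:ansatz} holds for all $m,n\in\N$. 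Combined with the uniqueness remark, this yields the full equivalence.
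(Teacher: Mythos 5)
Your main computation is correct and is exactly the ``immediate translation'' that the paper has in mind (it states this lemma without any proof): substituting \eqref{eq:ansatz} into both sides of \eqref{eq:Fiast-j}, your three computational facts are all valid --- $F_i\ast\big(F_j\curvearrowright w(F_i\stackrel{\ast}{,}F_i)\big)=F_j\curvearrowright (xw)(F_i\stackrel{\ast}{,}F_i)$ by associativity, $F_i\ast(d_{ij}u)=q_i^{a_{ij}}d_{ij}(F_i\ast u)$ and $F_i\ast(\widetilde{d_{ij}}u)=q_i^{-a_{ij}}\widetilde{d_{ij}}(F_i\ast u)$ by $0$-equivariance together with $F_iK_j^{\pm1}=q_i^{\pm a_{ij}}K_j^{\pm1}F_i$ and $[F_i,\cM_X^+]=0$, and $F_i^{m+n-1}=w_{m+n-1}(F_i)^\ast$ by Proposition \ref{prop:Fim} --- and the three slots then reproduce \eqref{eq:wmn-recursion} (with $r=q_i^{a_{ij}}$), \eqref{eq:rhomn-recursion}, and the $\sigma$-recursion with the correct signs and exponents. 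Your decision to make the backward implication (recursions plus $\rho_{0,n}=\sigma_{0,n}=0$ imply \eqref{eq:ansatz}, by induction on $m$) the load-bearing statement is also the right one: it is precisely what Sections \ref{sec:gf}--\ref{sec:caseII} use.

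The one claim that is wrong as stated is your ``conceptual'' uniqueness remark. It is not true that the first family $F_j\curvearrowright w(F_i\stackrel{\ast}{,}F_i)$ sits in the $U^0_\Theta$-component $1$ of the decomposition \eqref{eq:A-triang}: when a star monomial $F_i^{\ast s}\ast F_j\ast F_i^{\ast t}\ast\lambda_{st}$ is expanded in the ordinary product it acquires $K_j^{\pm1}$-components --- indeed \eqref{eq:ansatz} itself says that the ordinary monomial $F_i^mF_jF_i^n$ (which \emph{does} have trivial $U^0_\Theta$-component) differs from the family-1 term exactly by $d_{ij}$- and $\widetilde{d_{ij}}$-terms. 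So the three families cannot be separated by the factors $1$, $K_j$, $K_j^{-1}$. The repair is that you never need to separate family 1 from the other two: after substitution, the $F_j\curvearrowright$ contributions to the two sides of \eqref{eq:Fiast-j} are \emph{identical} polynomials evaluated the same way (this is \eqref{eq:wmn-recursion}), so they cancel, leaving an identity of the form $d_{ij}P(F_i)^\ast+\widetilde{d_{ij}}R(F_i)^\ast=0$ with $P,R\in\field[b_i^2][x]$. Since $d_{ij}P(F_i)^\ast\in\cM_X^+K_jU^-$ and $\widetilde{d_{ij}}R(F_i)^\ast\in\cM_X^+K_j^{-1}U^-$, these do meet trivially by \eqref{eq:A-triang}, and one concludes $P=R=0$ \emph{provided} $d_{ij},\widetilde{d_{ij}}\neq0$ and $\cZ_i\notin\field$ (if $(\alpha_i,\alpha_j)=0$ and $i$ is orthogonal to $X$, then $d_{ij}=\widetilde{d_{ij}}=0$, the decomposition \eqref{eq:ansatz} is genuinely non-unique, and the forward implication fails literally). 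This degenerate case is a further reason why your backward-direction formulation, rather than coefficient comparison, is the correct rigorous reading of the lemma; with that caveat your proof is complete and agrees with the paper's intent.
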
  
From now on we focus on the polynomials $\rho_{m,n}$ only. In Section \ref{sec:PN} we will determine the Serre combination of the polynomials $\rho_{m,n}$. The Serre combination of the polynomials $\sigma_{m,n}$ can then be obtained with the help of the isomorphism $\Phi$ from Section \ref{sec:anti-iso}.

We produce an unscaled versions of the recursion \eqref{eq:rhomn-recursion}. Define polynomials $U_{m,n}(x;q,r)\in \field[x]$ recursively by $U_{0,n}(x;q,r)=0$ for all $n\in \N$ and
\begin{align}
  2x U_{m,n}(x)= U_{m+1,n}(x) +& (1-q^m) r^{-1} U_{m-1,n}(x) + q^m(1-q^n)U_{m,n-1}(x) \label{eq:U-recursion}\\
  &+(1-q^m) H_{m+n-1}(x). \nonumber 
\end{align}
The following statement is an immediate consequence of Equation \eqref{eq:rhomn-recursion}.
\begin{lem}\label{lem:rhoU}
  The relation
  \begin{align*}
    \rho_{m,n}(x)=\frac{q_i^{(m-2)a_{ij}}}{\lambda_{ij}} (b_i/2)^{m+n-2} U_{m,n}(\frac{x}{b_i};q_i^2,q_i^{2a_{ij}})
  \end{align*}  
  holds for all $m,n\in \N$ with $b_i^2=4\cZ_i/(q_i-q_i^{-1})^2$.
\end{lem}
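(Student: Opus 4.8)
The plan is to prove the identity by showing that the right-hand side satisfies exactly the recursion \eqref{eq:rhomn-recursion} together with the initial condition $\rho_{0,n}(x)=0$, and then to invoke uniqueness. First I observe that the recursion \eqref{eq:rhomn-recursion} of Lemma \ref{lem:rho-sigma-recursion}, once solved for $\rho_{m+1,n}(x)$, expresses $\rho_{m+1,n}$ in terms of $\rho_{m,n}$, $\rho_{m-1,n}$, $\rho_{m,n-1}$ and the known polynomial $w_{m+n-1}(x)$. Hence, starting from the values $\rho_{0,n}(x)=0$ for all $n\in\N$ established just after \eqref{eq:ansatz}, the entire doubly-indexed family $\{\rho_{m,n}\}_{m,n\in\N}$ is determined by induction on the first index $m$. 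It therefore suffices to check that the candidate expression on the right-hand side obeys the same initial condition and the same recursion. The initial condition is immediate: the recursion \eqref{eq:U-recursion} with the convention $U_{0,n}(x;q,r)=0$ gives $U_{0,n}=0$, so the candidate vanishes at $m=0$.

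For the recursive step I would substitute the candidate formula into \eqref{eq:rhomn-recursion}, using the three elementary rewritings $\frac{b_i^2}{4}=(b_i/2)^2$, the identity $x\,(b_i/2)^{m+n-2}=2(x/b_i)\,(b_i/2)^{m+n-1}$, and $w_{m+n-1}(x)=(b_i/2)^{m+n-1}H_{m+n-1}(x/b_i;q_i^2)$ from \eqref{eq:iwm-def}. After these substitutions every term carries the common factor $\frac{q_i^{(m-1)a_{ij}}}{\lambda_{ij}}(b_i/2)^{m+n-1}$; dividing it out turns \eqref{eq:rhomn-recursion} into precisely \eqref{eq:U-recursion} evaluated at the point $x/b_i$ with $q=q_i^2$ and $r=q_i^{2a_{ij}}$.

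The only bookkeeping worth flagging is the tracking of the powers of $q_i^{a_{ij}}$. The prefactor attached to $\rho_{m-1,n}$ contributes $q_i^{(m-3)a_{ij}}/q_i^{(m-1)a_{ij}}=q_i^{-2a_{ij}}=r^{-1}$, matching the coefficient $(1-q_i^{2m})r^{-1}$ in \eqref{eq:U-recursion}; the prefactor attached to $\rho_{m,n-1}$ together with the explicit factor $q_i^{2m+a_{ij}}$ contributes $q_i^{2m+a_{ij}}\,q_i^{(m-2)a_{ij}}/q_i^{(m-1)a_{ij}}=q_i^{2m}=q^m$, matching $q^m(1-q^n)$; the $w_{m+n-1}$ term lines up with the $(1-q^m)H_{m+n-1}$ term; and the left-hand factor $q_i^{a_{ij}}x$ produces the leading term $2xU_{m,n}$ of \eqref{eq:U-recursion} (evaluated at $x/b_i$).

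This is the whole argument, and there is no genuine obstacle: the content is purely the verification that the exponents of $q_i^{a_{ij}}$ collapse to $r^{-1}$ and $q^m$ as required, and that the rescaling conventions $H_m(\,\cdot\,;q_i^2)$, $b_i^2=4\cZ_i/(q_i-q_i^{-1})^2$, $q=q_i^2$, $r=q_i^{2a_{ij}}$ are applied consistently throughout. Once the recursion and the initial condition are matched, uniqueness of the family determined by \eqref{eq:rhomn-recursion} yields the claimed closed form for all $m,n\in\N$.
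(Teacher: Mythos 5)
Your proof is correct and takes essentially the same approach as the paper, which simply states that the lemma is an immediate consequence of the recursion \eqref{eq:rhomn-recursion}; you have spelled out the verification (matching initial conditions $\rho_{0,n}=0$, $U_{0,n}=0$, dividing out the common factor $q_i^{(m-1)a_{ij}}\lambda_{ij}^{-1}(b_i/2)^{m+n-1}$, and invoking uniqueness by induction on $m$) that the paper leaves implicit. The exponent bookkeeping $q_i^{-2a_{ij}}=r^{-1}$ and $q_i^{2m}=q^m$ is exactly right.
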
  

\subsection{A generating function approach for $\tau(i)=i\neq j$, $i\in I\setminus X$, $j\in X$}
\label{sec:gf}
Define a generating function
\begin{align*}
  \psi\left(\begin{matrix} x \\s,t\end{matrix}\,;\,q\right)=\sum_{m,n\ge 0} \frac{H_{m+n}(x;q)}{(q;q)_m(q;q)_n}s^mt^n.
\end{align*}
Using the recursions \eqref{eq:Hm-recursion} and \eqref{eq:Hmn-recursion} and induction over $m$ one obtains
\begin{align*}
  H_{m,n}(x,x;q,1)=H_{m+n}(x;q).
\end{align*}
Hence \cite[Theorem 2.7]{a-CKY20} implies that
\begin{align*}
  \psi\left(\begin{matrix} x \\s,t\end{matrix}\,;\,q\right) = \frac{(st;q)_\infty}{|(se^{i\theta},te^{i\theta};q)_\infty|^2}
\end{align*}
for $x=\cos(\theta)$. Following \cite[Section 3.1]{a-CKY20} we have
\begin{align*}
  \psi\left(\begin{matrix} x \\qs,t\end{matrix}\,;\,q\right) = \frac{1-2xs+s^2}{1-ts} \psi\left(\begin{matrix} x \\s,t\end{matrix}\,;\,q\right).
\end{align*}
In terms of $x_{1,2}=x\pm \sqrt{x^2-1}$ defined in \eqref{eq:xa12}, this can be rewritten as
\begin{align}\label{eq:psiqst}
  \psi\left(\begin{matrix} x \\qs,t\end{matrix}\,;\,q\right) = \frac{(1-x_1s)(1-x_2s)}{1-ts} \psi\left(\begin{matrix} x \\s,t\end{matrix}\,;\,q\right).
\end{align}
Now recall the polynomials $U_{m,n}(x;q,r)$ defined by the recursion \eqref{eq:U-recursion} and consider the generating function
\begin{align}
\label{eq:phiU}
  \phi\left(\begin{matrix} x \\s,t\end{matrix}\,;\,q\right) = \sum_{m,n\ge 0} \frac{U_{m,n}(x;q,r)}{(q;q)_m(q;q)_n}s^mt^n. 
\end{align}
The initial condition $U_{0,n}(x;q,r)=0$ implies that
\begin{align}\label{eq:phi0t}
   \phi\left(\begin{matrix} x \\0,t\end{matrix}\,;\,q\right)=0.
\end{align}
Finally, recall the definition \eqref{eq:def-eta} of the function $\eta\left(\begin{matrix} x \\s \end{matrix}\,;\,q\right)$ which is analytic at $s=0$.
  \begin{lem}  
  \label{phi-psi}
    The relation
\begin{align}\label{eq:phi-psi}
\phi\left(\begin{matrix} x \\s,t\end{matrix}\,;\,q\right)=-s^2 \eta\left(\begin{matrix} x \\s \end{matrix}\,;\,q\right)  \psi\left(\begin{matrix} x \\s,t\end{matrix}\,;\,q\right)
\end{align}
    holds as an identity of formal power series in $s$ and $t$ with coefficients in $\field[x]$.
  \end{lem}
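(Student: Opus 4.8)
The plan is to turn the recursion \eqref{eq:U-recursion} for the polynomials $U_{m,n}$ into a single functional equation for the generating function $\phi$, and then to check that the proposed right-hand side satisfies the same functional equation and the same normalization at $s=0$; uniqueness of formal power series solutions then forces equality. Throughout I would suppress the arguments and write $\phi(s,t)$, $\psi(s,t)$ and $\eta(s)$ for the three generating functions, exactly as in the proof of Proposition \ref{prop:Du-genfn}. First I would multiply \eqref{eq:U-recursion} by $s^m t^n/((q;q)_m(q;q)_n)$ and sum over all $m,n\ge 0$. The elementary identity $(1-q^m)/(q;q)_m=1/(q;q)_{m-1}$ and its analogue in $n$, together with $U_{0,n}=U_{m,-1}=0$, turn each term into a simple operation on the generating functions: the shift $U_{m+1,n}$ contributes $s^{-1}\big(\phi(s,t)-\phi(qs,t)\big)$, the term $(1-q^m)r^{-1}U_{m-1,n}$ contributes $r^{-1}s\,\phi(s,t)$, the term $q^m(1-q^n)U_{m,n-1}$ contributes $t\,\phi(qs,t)$, and crucially the inhomogeneous term $(1-q^m)H_{m+n-1}$ collapses directly to $s\,\psi(s,t)$ by the very definition of $\psi$. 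After multiplying through by $s$ and collecting terms this yields
\[
  \phi(s,t)\,(1-2xs+r^{-1}s^2)=\phi(qs,t)\,(1-ts)-s^2\,\psi(s,t).
\]

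Next I would verify that $\Psi(s,t):=-s^2\,\eta(s)\,\psi(s,t)$ solves this same equation. Substituting $\Psi$ for $\phi$, the right-hand side becomes $-q^2s^2\,\eta(qs)\,\psi(qs,t)\,(1-ts)-s^2\,\psi(s,t)$. Using the functional equation \eqref{eq:psiqst} for $\psi$ together with $(1-x_1s)(1-x_2s)=1-2xs+s^2$, I may replace $\psi(qs,t)(1-ts)$ by $(1-2xs+s^2)\,\psi(s,t)$. Dividing the resulting identity by the common factor $-s^2\,\psi(s,t)$, the required equality reduces exactly to
\[
  (1-2xs+r^{-1}s^2)\,\eta(s)-1=q^2(1-2xs+s^2)\,\eta(qs),
\]
which is precisely the functional equation for $\eta$ established in the proof of Proposition \ref{prop:Du-genfn}. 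Hence $\Psi$ satisfies the displayed functional equation for $\phi$.

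Finally I would invoke uniqueness. Both $\phi$ and $\Psi$ vanish at $s=0$: for $\phi$ this is \eqref{eq:phi0t}, and for $\Psi$ it is immediate from the explicit factor $s^2$. Comparing coefficients of $s^m t^n$ on the two sides of the functional equation, the contribution of $\phi(s,t)$ on the left and of $\phi(qs,t)$ on the right carry the factors $1$ and $q^m$, so the coefficient of $s^m t^n$ appears with overall factor $(1-q^m)$, which is nonzero for every $m\ge 1$. Thus for $m\ge 1$ each $s^m$-coefficient is determined by coefficients of strictly lower degree in $s$, while the $s^0$-coefficients are fixed by the vanishing at $s=0$. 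Consequently the functional equation together with this initial condition has a unique solution among formal power series in $s$ and $t$ with coefficients in $\field[x]$, and therefore $\phi=\Psi$, which is precisely \eqref{eq:phi-psi}.

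I expect the only delicate point to be the bookkeeping in the first step, in particular the correct treatment of the inhomogeneous term $(1-q^m)H_{m+n-1}$ and of the various $q$-shifts $s\mapsto qs$; once the functional equation is in hand, the remaining steps are a direct comparison with identities already available from \eqref{eq:psiqst} and Proposition \ref{prop:Du-genfn}.
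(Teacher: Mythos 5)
Your proof is correct, and it shares its first half with the paper's own argument: the derivation of the $q$-difference equation
\begin{align*}
(1-2xs+r^{-1}s^2)\,\phi\left(\begin{matrix} x \\s,t\end{matrix}\,;\,q\right)
=(1-ts)\,\phi\left(\begin{matrix} x \\qs,t\end{matrix}\,;\,q\right)-s^2\,\psi\left(\begin{matrix} x \\s,t\end{matrix}\,;\,q\right)
\end{align*}
from \eqref{eq:U-recursion} is exactly the paper's first step, including the treatment of the inhomogeneous term $(1-q^m)H_{m+n-1}$, which indeed collapses to $s\,\psi$. Where you diverge is in how the identity is extracted from this equation. The paper iterates the equation $n$ times, using \eqref{eq:psiqst} at each step, so that the partial sums of the series \eqref{eq:def-eta} defining $\eta$ appear explicitly, and then passes to the limit $n\to\infty$ \emph{analytically} for $q\in\C$ with $|q|<1$, killing the term involving $\phi(q^ns,t)$ via \eqref{eq:phi0t}. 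You instead verify that the candidate $-s^2\eta(s)\psi(s,t)$ satisfies the same $q$-difference equation --- which, after invoking \eqref{eq:psiqst}, reduces precisely to the functional equation $q^2(1-2xs+s^2)\eta(qs)=(1-2xs+r^{-1}s^2)\eta(s)-1$ established in the proof of Proposition \ref{prop:Du-genfn} --- and then conclude by uniqueness: the coefficient of $s^mt^n$ in the equation carries the factor $1-q^m\neq 0$ for $m\geq 1$, and the $s^0$-coefficients are pinned down by \eqref{eq:phi0t}, respectively by the explicit factor $s^2$. This buys you a proof that never leaves the formal power series ring $\field[x][[s,t]]$, matching the formal statement of the lemma and avoiding the analytic limit (and the implicit transfer from complex $|q|<1$ back to the indeterminate $q$) that the paper leaves tacit; one small point you should make explicit is that the $\eta$ functional equation is itself available formally, which follows since, by Proposition \ref{prop:Du-genfn}, it is equivalent to the recursion \eqref{eq:Du-recursion} for the coefficients $\Dc_n$. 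The paper's iteration, by contrast, reconstructs the series for $\eta$ rather than presupposing it, so it is self-contained modulo \eqref{eq:psiqst}, at the cost of the analytic step.
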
  
\begin{proof}
The recursion \eqref{eq:U-recursion} implies that
\begin{align*}
  2x  \phi\left(\begin{matrix} x \\s,t\end{matrix}\,;\,q\right)=& \frac{1}{s}\left( \phi\left(\begin{matrix} x \\s,t\end{matrix}\,;\,q\right) -  \phi\left(\begin{matrix} x \\qs,t\end{matrix}\,;\,q\right)\right)\\
        &\qquad + s r^{-1}  \phi\left(\begin{matrix} x \\s,t\end{matrix}\,;\,q\right) + t  \phi\left(\begin{matrix} x \\qs,t\end{matrix}\,;\,q\right) +s  \psi\left(\begin{matrix} x \\s,t\end{matrix}\,;\, q\right)
\end{align*}
and hence
\begin{align*}
  (1-ts) \phi\left(\begin{matrix} x \\qs,t\end{matrix}\,;\,q\right)=(1-2xs + r^{-1}s^2)  \phi\left(\begin{matrix} x \\s,t\end{matrix}\,;\,q\right) + s^2  \psi\left(\begin{matrix} x \\s,t\end{matrix}\,;\,q\right).
\end{align*}
The above relation can be rewritten as
\begin{align*}
   \phi\left(\begin{matrix} x \\s,t\end{matrix}\,;\,q\right)= \frac{1-ts}{(1-a_1s)(1-a_2s)}  \phi\left(\begin{matrix} x \\qs,t\end{matrix}\,;\,q\right) - \frac{s^2}{(1-a_1s)(1-a_2s)}  \psi\left(\begin{matrix} x \\s,t\end{matrix}\,;\,q\right)
\end{align*}
where $a_{1,2}=x\pm \sqrt{x^2-r^{-1}}$ satisfy the relation $1-2xs+r^{-1}s^2=(1-a_1s)(1-a_2s)$, cf. \eqref{eq:xa12}.
By induction over $n$ the above formula together with Equation \eqref{eq:psiqst} gives
\begin{align*}
  \phi\left(\begin{matrix} x \\s,t\end{matrix}\,;\,q\right)=& \frac{(ts;q)_n}{(a_1s;q)_n(a_2s;q)_n}  \phi\left(\begin{matrix} x \\q^ns,t\end{matrix}\,;\,q\right)\\
      &\qquad\qquad - s^2 \sum_{k=0}^{n-1}\frac{q^{2k}(x_1s;q)_k(x_2s;q)_k}{(a_1s;q)_{k+1}(a_2s;q)_{k+1}}   \psi\left(\begin{matrix} x \\s,t\end{matrix}\,;\,q\right).
\end{align*}
Now we argue analytically for $q\in \C$ with $|q|<1$. In the limit $n\to \infty$ the first term in the above expressions vanishes by \eqref{eq:phi0t} and hence we get the desired formula \eqref{eq:phi-psi}.
\end{proof}
\subsection{The quantum Serre combination of the polynomials $\rho_{m,n}$}
\label{sec:PN}
To simplify notation set $N=1-a_{ij}$ and $q=q_i$. By Equation \eqref{eq:ansatz} and Lemma \ref{lem:rhoU} we need to determine the following polynomial
\begin{align}
  \sum_{m=0}^N(-1)^m\begin{bmatrix}N\\m\end{bmatrix}_{q}\rho_{N-m,m}(x)
    =&(-1)^N \sum_{m=0}^N(-1)^m\begin{bmatrix}N\\m\end{bmatrix}_{q}\rho_{m,N-m}(x) \nonumber\\
    =&(-1)^N \frac{q^{2(N-1)}}{\lambda_{ij}}(b_i/2)^{N-2}P_N(x/b_i;q) \label{eq:rho-Serre-combi}
\end{align}
where
\begin{equation}
\label{eq:P}
  P_N(x;q)=\sum_{m=0}^N (-1)^m\begin{bmatrix}N\\m\end{bmatrix}_q q^{(1-N)m} U_{m,N-m}(x;q^2,q^{2(1-N)}).
\end{equation}
Up to an overall factor, the polynomial $P_N(x;q)$ is a deformed Chebyshev polynomial of the second kind as defined in Section \ref{sec:Chebyshev}.
\begin{lem}\label{lem:PN}
For any $N\in \N$ we have 
\begin{align}\label{eq:PN}
P_N(x;q)=(-1)^{N-1} q^{(1-N)N}(q^2;q^2)_{N-1} \Dc_{N-2}(x;q^2, q^{2(1-N)}).
\end{align}
\end{lem}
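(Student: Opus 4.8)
The plan is to substitute the generating–function identity of Lemma~\ref{phi-psi} into the definition \eqref{eq:P} of $P_N$ and then to collapse the resulting sum by means of the finite $q$-binomial theorem. Throughout I set $Q=q^2$ and take $r=q^{2(1-N)}$, so that $r^{-1}=q^{2(N-1)}$.

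First I would combine Lemma~\ref{phi-psi} with the first generating function of Proposition~\ref{prop:Du-genfn} (both read with $q$ replaced by $Q$), together with the expansion $\psi\!\left(\begin{smallmatrix} x \\ s,t\end{smallmatrix};Q\right)=\sum_{a,b\ge 0}\frac{H_{a+b}(x;Q)}{(Q;Q)_a(Q;Q)_b}s^at^b$. Extracting the coefficient of $s^mt^n$ from the formal power series identity $\phi=-s^2\eta\psi$ gives
\[
U_{m,n}(x;Q,r)=-(Q;Q)_m\sum_{k=0}^{m-2}\frac{\Dc_k(x;Q,r)\,H_{m+n-2-k}(x;Q)}{(1-Q^{k+2})\,(Q;Q)_{m-2-k}}.
\]
Since the cited statements are identities of formal power series, this is a genuine polynomial identity in $q$, valid with no analytic caveat.

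Next I would plug this into \eqref{eq:P}. As every summand has $m+n=N$, the Hermite index $m+n-2-k=N-2-k$ depends only on $k$, so after interchanging the two summations the expression reorganizes into
\[
P_N(x;q)=\sum_{k=0}^{N-2}\frac{-S_k}{1-Q^{k+2}}\,\Dc_k(x;Q,r)\,H_{N-2-k}(x;Q),\qquad
S_k=\sum_{m=k+2}^{N}(-1)^mq^{(1-N)m}\begin{bmatrix}N\\ m\end{bmatrix}_q\frac{(Q;Q)_m}{(Q;Q)_{m-2-k}}.
\]
The heart of the proof is to show $S_k=0$ for $0\le k\le N-3$. The decisive point is that $\begin{bmatrix}N\\ m\end{bmatrix}_q$ is Lusztig's \emph{balanced} Gaussian binomial, so $\begin{bmatrix}N\\ m\end{bmatrix}_q=q^{-m(N-m)}\frac{(Q;Q)_N}{(Q;Q)_m(Q;Q)_{N-m}}$. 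Substituting this, the total $q$-power in the summand collects to $q^{(1-N)m-m(N-m)}=q^{m(m-1)}q^{-2(N-1)m}=Q^{\binom m2}Q^{-(N-1)m}$, so the factor $Q^{\binom m2}$ required by the finite $q$-binomial theorem appears automatically. After the shift $m=j+k+2$ (with $0\le j\le M:=N-k-2$) a short computation recognizes the inner sum as a Gauss sum, evaluating it up to an explicit monomial prefactor to $(Q^{k+3-N};Q)_{M}$. For $k\le N-3$ one has $M\ge 1$, and the last factor of this $q$-Pochhammer symbol, at index $i=N-k-3$, is $(1-Q^{0})=0$; hence $S_k=0$.

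Finally only $k=N-2$ survives. There $M=0$, the sum reduces to its single term $m=N$ giving $S_{N-2}=(-1)^Nq^{(1-N)N}(Q;Q)_N$, and $H_0(x;Q)=1$. Using $(Q;Q)_N/(1-Q^N)=(Q;Q)_{N-1}$ I then obtain
\[
P_N(x;q)=(-1)^{N-1}q^{(1-N)N}(q^2;q^2)_{N-1}\,\Dc_{N-2}(x;q^2,q^{2(1-N)}),
\]
which is \eqref{eq:PN}. I expect the main obstacle to be precisely the middle step: one must convert the balanced binomial into the ordinary Gaussian binomial and notice that the resulting Pochhammer symbol vanishes exactly when $k<N-2$. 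It is the balanced convention that supplies the $Q^{\binom m2}$ needed to invoke the $q$-binomial theorem; with the naive (non-balanced) binomial the sums $S_k$ would not vanish and the combination would fail to collapse to a single deformed Chebyshev polynomial.
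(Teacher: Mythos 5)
Your proposal is correct and takes essentially the same route as the paper: both substitute the coefficient extraction from Lemma~\ref{phi-psi} (combined with Proposition~\ref{prop:Du-genfn} and \eqref{eq:phiU}) into the definition \eqref{eq:P}, interchange the two sums, and show that the resulting alternating coefficient sums (your $S_k$, the paper's $\omega_{N,k}$) vanish except at the top index, leaving exactly the claimed multiple of $\Dc_{N-2}$. The only difference is cosmetic: you evaluate the inner sum by converting the balanced binomial to the ordinary Gaussian one and invoking the finite $q$-binomial (Gauss) theorem to obtain a vanishing $q$-Pochhammer symbol, whereas the paper rewrites everything in balanced form and cites \cite[1.3.4]{b-Lusztig94} --- these are the same identity.
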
  
\begin{proof}
  By Proposition \ref{prop:Du-genfn}, Lemma \ref{phi-psi} and Equation \eqref{eq:phiU} we have
  \begin{align}
    P_N(x;q)&=\sum_{m=0}^N(-1)^{m-1}\begin{bmatrix}N\\m\end{bmatrix}_q q^{(1-N)m}(q^2;q^2)_m \cdot\nonumber\\
       & \qquad \qquad \qquad \qquad\qquad  \cdot\sum_{k=2}^m \frac{\Dc_{k-2}(x;q^2, q^{2(1-N)})}{1-q^{2k}}\, \frac{H_{N-k}(x;q^2)}{(q^2,q^2)_{m-k}}\nonumber\\
    &=\sum_{k=2}^N \frac{\omega_{N,k}(q)}{1-q^{2k}}\, \Dc_{k-2}(x;q^2, q^{2(1-N)}) H_{N-k}(x;q^2) 
    \label{eq:Pomegab}
  \end{align}
  where
  \begin{align*}
    \omega_{N,k}(q)=\sum_{m=k}^N (-1)^{m-1}\begin{bmatrix}N\\m\end{bmatrix}_q \frac{(q^2;q^2)_m}{(q^2;q^2)_{m-k}}q^{(1-N)m}.
  \end{align*}
  Using the relation $(q^2;q^2)_m=(-1)^m q^{m(m+1)/2}(q-q^{-1})^m [m]^!_q$ and setting $\ell=m-k$ we obtain
  \begin{align*}
   \omega_{N,k}(q)=- q^{k(k+1)/2}q^{(1-N)k}(q-q^{-1})^k \frac{[N]^!_q}{[N-k]^!_q} \sum_{\ell=0}^{N-k}(-1)^\ell \begin{bmatrix}N-k\\ \ell\end{bmatrix}_q q^{\ell(k-N+1)}
  \end{align*}
  By \cite[1.3.4]{b-Lusztig94} we obtain
  \begin{align*}
    \omega_{N,k}(q)&=\begin{cases}
                     0 & \mbox{if $k\neq N$,}\\
                     (-1)^{N-1} (q^2;q^2)_N q^{(1-N)N}& \mbox{if $k=N$.}
                   \end{cases}
  \end{align*}
  Inserting the above in \eqref{eq:Pomegab} we obtain the desired formula.
\end{proof}  
Inserting \eqref{eq:PN} into Equation \eqref{eq:rho-Serre-combi} we obtain
\begin{align}
   \sum_{m=0}^N(-1)^m&\begin{bmatrix}N\\m\end{bmatrix}_{q}\rho_{N-m,m}(x) \nonumber\\
    &=- \frac{q^{(N-2)(1-N)}}{\lambda_{ij}}\left(\frac{b_i}{2}\right)^{N-2} (q^2;q^2)_{N-1} \Dc_{N-2}(x/b_i;q^2, q^{2(1-N)}). \label{eq:rho-Serre-combi-final}
\end{align}  
\subsection{The quantum Serre relation for $\tau(i)=i$ where $i\in I\setminus X$ and $j\in X$}\label{sec:caseII}
We are now in a position to write down the deformed quantum Serre relation \eqref{eq:S=C-intro} in the case $i\in I\setminus X$, $\tau(i)=i$ and $j\in X$.
Recall the antilinear algebra isomorphism $\Phi:\cB_\bc\rightarrow \cB_{\bc'}$ from Theorem \ref{thm:bar}. Using the parameters $\bc$ and $\bc'$ we obtain two star products $\ast$ and $\ast'$ on $\cA$ such that $\cB_\bc\cong (\cA,\ast)$ and $\cB_{\bc'}\cong (\cA,\ast')$, respectively. Under these identifications we may consider $\Phi$ as an antilinear algebra isomorphism
\begin{align*}
  \Phi:(\cA,\ast) \rightarrow (\cA,\ast')
\end{align*}  
Also recall the elements $b_i^2=4\cZ_i/(q_i-q_i^{-1})^2$ and write $(b_i^2)'=4\cZ_i'/(q_i-q_i^{-1})^2$. By \eqref{eq:cZibar} we have
\begin{align}\label{eq:Phibi}
  \Phi(b_i^2)=\overline{b_i^2}=(b_i')^2
\end{align}
With these notational preliminaries we are ready to prove our main result.
\begin{thm}\label{thm:caseII}
  Let $i\in I\setminus X$ with $\tau(i)=i$ and $j\in X$. Then the relation
  \begin{align*}
    \sum_{n=0}^{1-a_{ij}}(-1)^n \begin{bmatrix} 1-a_{ij}\\ n \end{bmatrix}_{q_i}
     F_j \curvearrowright w_{1-a_{ij}-n,n}(F_i\stackrel{\ast}{,}F_i)  + C + D =0
  \end{align*}
  holds in the algebra $(\cA,\ast)$ where 
  \begin{align}
    C&= -\frac{d_{ij}}{\lambda_{ij}} q_i^{-a_{ij}(a_{ij} +1)} (q_i^2; q_i^2)_{-a_{ij}} \left(\frac{b_i}{2}\right)^{-a_{ij}-1} \Dc_{-a_{ij}-1} (\frac{F_i}{b_i}; q_i^2, q_i^{2 a_{ij}})^\ast,\label{eq:C}\\
    D&= \frac{\widetilde{d_{ij}}}{\lambda_{ij}}q_i^{a_{ij}(a_{ij} +1)} (q_i^{-2}; q_i^{-2})_{-a_{ij}} \left(\frac{b_i}{2}\right)^{-a_{ij}-1} \Dc_{-a_{ij}-1} (\frac{F_i}{b_i}; q_i^{-2}, q_i^{-2 a_{ij}})^\ast. \label{eq:D}
  \end{align}
\end{thm}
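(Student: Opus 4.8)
The plan is to assemble the three pieces we have already developed in Sections~\ref{sec:jinX}--\ref{sec:PN} and to deploy the bar involution from Section~\ref{sec:anti-iso} to avoid redoing the $\sigma_{m,n}$ computation from scratch. First I would insert the ansatz \eqref{eq:ansatz} into the quantum Serre combination on the left-hand side. Since $S_{ij}(F_i,F_j)=0$ holds in $\cA$ (it is just the usual quantum Serre relation in $U$), applying the Letzter-map isomorphism $\psi:\cB_\bc\to(\cA,\ast)$ turns this into a relation in $(\cA,\ast)$. Writing out $S_{ij}(F_i,F_j)=\sum_{n}(-1)^n\stirling{1-a_{ij}}{n}_{q_i}F_i^{1-a_{ij}-n}F_jF_i^{n}$ and substituting \eqref{eq:ansatz} splits the sum into three blocks: the bivariate $q$-Hermite block $F_j\curvearrowright w_{1-a_{ij}-n,n}(F_i\stackrel{\ast}{,}F_i)$, a $d_{ij}$-block weighted by $\rho_{1-a_{ij}-n,n}$, and a $\widetilde{d_{ij}}$-block weighted by $\sigma_{1-a_{ij}-n,n}$. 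The first block is precisely the left-hand side of the asserted identity, so it remains to identify the second and third blocks with $C$ and $D$.

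Next I would evaluate the $d_{ij}$-block. By \eqref{eq:rho-Serre-combi}--\eqref{eq:rho-Serre-combi-final} we already know the Serre combination $\sum_{m}(-1)^m\stirling{N}{m}_q\rho_{N-m,m}(x)$ in closed form as an explicit scalar multiple of the deformed Chebyshev polynomial $\Dc_{N-2}(x/b_i;q_i^2,q_i^{2(1-N)})$, with $N=1-a_{ij}$. Setting $N-2=-a_{ij}-1$ and $q_i^{2(1-N)}=q_i^{2a_{ij}}$, and recalling that the coefficient $d_{ij}$ sits to the left of the polynomial in the ansatz, I would check that the prefactor $-q_i^{(N-2)(1-N)}(q_i^2;q_i^2)_{N-1}/\lambda_{ij}$ in \eqref{eq:rho-Serre-combi-final} matches the prefactor $-q_i^{-a_{ij}(a_{ij}+1)}(q_i^2;q_i^2)_{-a_{ij}}/\lambda_{ij}$ appearing in $C$; the exponent identity $(N-2)(1-N)=-a_{ij}(a_{ij}+1)$ follows immediately by substituting $N=1-a_{ij}$. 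This confirms that the $d_{ij}$-block equals $C$ after evaluation in $(\cA,\ast)$ via the $\ast$-notation \eqref{eq:wast-def}.

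For the $\widetilde{d_{ij}}$-block I would invoke the antilinear isomorphism $\Phi:(\cA,\ast)\to(\cA,\ast')$. The key symmetry is that the recursions for $\sigma_{m,n}$ in Lemma~\ref{lem:rho-sigma-recursion} are obtained from those for $\rho_{m,n}$ by the substitution $q_i\mapsto q_i^{-1}$ together with an overall sign, and that $\Phi$ sends $q\mapsto q^{-1}$, $d_{ij}\mapsto\widetilde{d'_{ij}}$, $\widetilde{d_{ij}}\mapsto d'_{ij}$, and $b_i^2\mapsto(b_i')^2$ by \eqref{eq:cZibar}, the relations below \eqref{eq:cZibar}, and \eqref{eq:Phibi}. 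Applying $\Phi$ to the already-established identity $\sum_m(-1)^m\stirling{N}{m}_q\rho_{N-m,m}(F_i)^\ast=(\text{the }C\text{-combination})$ and using Corollary~\ref{cor:def-qSerre-bar} to transport the bivariate block, I would read off the Serre combination of the $\sigma_{m,n}$ as the image under $q_i\mapsto q_i^{-1}$ of the $\rho$ combination, which produces exactly the expression $D$ with its $\Dc_{-a_{ij}-1}(\cdot;q_i^{-2},q_i^{-2a_{ij}})$ and the sign-flipped prefactor $+q_i^{a_{ij}(a_{ij}+1)}(q_i^{-2};q_i^{-2})_{-a_{ij}}/\lambda_{ij}$.

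The main obstacle I anticipate is not any single computation but keeping the noncommutativity bookkeeping straight throughout: because $b_i^2$ (equivalently $\cZ_i$) does \emph{not} commute with $F_j$ in case~(II), the convention \eqref{eq:insertion-def} of placing all coefficients to the right is essential, and one must verify that $d_{ij},\widetilde{d_{ij}}$ really factor out on the correct side so that the scalar Chebyshev identity \eqref{eq:PN} applies verbatim. Concretely, I would need to confirm that in the ansatz \eqref{eq:ansatz} the elements $d_{ij}$ and $\widetilde{d_{ij}}$ multiply the $\ast$-polynomials $\rho_{m,n}(F_i)^\ast$ and $\sigma_{m,n}(F_i)^\ast$ from the left and that the bar involution interacts correctly with this ordering, so that $\Phi$ genuinely converts the $\rho$-statement into the $\sigma$-statement without introducing stray commutator corrections. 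Once this ordering is checked, the proof closes by combining the three identified blocks into the displayed relation.
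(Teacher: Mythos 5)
Your overall architecture coincides with the paper's: substitute the ansatz \eqref{eq:ansatz} into the quantum Serre relation to obtain $A+C+D=0$ in $\cA$, evaluate the $d_{ij}$-block by the closed form \eqref{eq:rho-Serre-combi-final} (your exponent check $(N-2)(1-N)=-a_{ij}(a_{ij}+1)$ with $N=1-a_{ij}$ is correct, as is the observation that $d_{ij}$ must sit to the left), and then use the antilinear isomorphism $\Phi$ of Theorem \ref{thm:bar} to avoid computing the $\sigma_{m,n}$-combination directly. The first two blocks are treated exactly as in the paper.

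However, your justification of the $D$-term has a genuine gap. The claimed ``key symmetry'' --- that the $\sigma_{m,n}$-recursion in Lemma \ref{lem:rho-sigma-recursion} is the $q_i\mapsto q_i^{-1}$ image of the $\rho_{m,n}$-recursion up to an overall sign --- is false: the inhomogeneous term for $\rho_{m,n}$ carries the factor $(1-q_i^{2m})$ while that for $\sigma_{m,n}$ carries $(1-q_i^{2(m+n)})$, and the polynomials themselves are not bar-images of one another; for instance the recursions give $\rho_{1,n}(x)=0$ for all $n$, whereas $\sigma_{1,n}(x)=q_i^{a_{ij}}(1-q_i^{2n})\lambda_{ij}^{-1}\,w_{n-1}(x)\neq 0$ for $n\ge 1$. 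Consequently you cannot ``read off'' the Serre combination of the $\sigma_{m,n}$ by substituting $q_i\mapsto q_i^{-1}$ into the $\rho$-combination. What applying $\Phi$ to the full relation $A+C+D=0$ actually yields, after using \eqref{eq:Phibi} and Corollary \ref{cor:def-qSerre-bar} to identify $\Phi(A)=A'$, is only the sum $\Phi(C)+\Phi(D)=C'+D'$, which by itself does not determine $D$. The missing step --- and the one the paper supplies --- is a separation argument: by \eqref{eq:dij} one has $C=K_j\,p_C(F_i)^\ast$ and $D=K_j^{-1}p_D(F_i)^\ast$ with $p_C,p_D\in\cM_X^+[x]$ (similarly for $C'$, $D'$), and since $\Phi(K_j)=K_j^{-1}$, comparing the $K_j$- and $K_j^{-1}$-components of $\Phi(C)+\Phi(D)=C'+D'$ gives $\Phi(C)=D'$ and $\Phi(D)=C'$. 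Only then does $D=\Phi^{-1}(C')$ follow, and applying the bar to the already computed closed form of $C'$ produces \eqref{eq:D}. Your instinct to use $\Phi$ is the right one, but the equality of the two Serre combinations under $q_i\mapsto q_i^{-1}$ is a consequence of this grading argument, not something that can be extracted from the recursions themselves.
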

\begin{proof}
By Equation \eqref{eq:ansatz} we have the relation
\begin{align}\label{eq:ABC}
0=\sum_{n=0}^{1-a_{ij}} (-1)^n \begin{bmatrix}1-a_{ij} \\ n \end{bmatrix}_{q_i}F_i^{1-a_{ij} - n} F_j F_i^n
 = A+C+D 
 \end{align}
in $\cA$ with
\begin{align}
  A&= \sum_{n=0}^{1-a_{ij}} (-1)^n \begin{bmatrix}1-a_{ij} \\ n \end{bmatrix}_{q_i} F_j \curvearrowright w_{1- a_{ij} -n, n}(F_i\stackrel{\ast}{,}F_i)\label{eq:A}
\\
C&=  d_{ij} \sum_{n=0}^{1-a_{ij}} (-1)^n \begin{bmatrix}1-a_{ij} \\ n \end{bmatrix}_{q_i} \rho_{1-a_{ij} - n, n}(F_i)^*,\nonumber
\\
D&=  \widetilde{d_{ij}} \sum_{n=0}^{1-a_{ij}} (-1)^n \begin{bmatrix}1-a_{ij} \\ n \end{bmatrix}_{q_i} \sigma_{1-a_{ij} - n, n}(F_i)^*.\nonumber
\end{align}
By Equation \eqref{eq:rho-Serre-combi-final} we obtain
\begin{align*}
C&= -\frac{d_{ij}}{\lambda_{ij}} \left( \frac{b_i}{2} \right)^{-a_{ij}-1} q_i^{-a_{ij}(a_{ij} +1)} (q_i^2; q_i^2)_{-a_{ij}} \Dc_{-a_{ij}-1} (\frac{F_i}{b_i}; q_i^2, q_i^{2 a_{ij}})^\ast
\end{align*}
which proves Equation \eqref{eq:C}.
For the parameters $\bc'\in \cC$ defined by \eqref{eq:c'-def} we write Equation \eqref{eq:ABC} as
\begin{align*}
  A'+C'+D'=0.
\end{align*}  
Equation \eqref{eq:Phibi} and Corollary \ref{cor:def-qSerre-bar} imply that $\Phi(A)=A'$. Hence we obtain
\begin{align}\label{eq:C+D}
  \Phi(C) + \Phi(D) = C' + D'.
\end{align}
Relation \eqref{eq:dij} implies that
\begin{align*}
  C= K_j p_C(F_i)^\ast, \qquad  C'= K_j p_C'(F_i)^\ast, \\
  D= K_j^{-1} p_D(F_i)^\ast, \qquad D'= K_j^{-1} p_D'(F_i)^\ast
\end{align*}
for some polynomials $p_C(x), p_D(x), p_C'(x), p_D'(x)\in \cM_X^+[x]$. As $\Phi(K_j)=K_j^{-1}$, Equation \eqref{eq:C+D} hence implies that $\Phi(C)=D'$ and $\Phi(D)=C'$. This gives us
\begin{align*}
  D&=\Phi^{-1}(C')\\
   &=\frac{\widetilde{d_{ij}}}{\lambda_{ij}}\left( \frac{b_i}{2} \right)^{-a_{ij}-1} q_i^{a_{ij}(a_{ij} +1)} (q_i^{-2}; q_i^{-2})_{-a_{ij}} \Dc_{-a_{ij}-1} (\frac{F_i}{b_i}; q_i^{-2}, q_i^{-2 a_{ij}})^\ast
\end{align*}
which proves Equation \eqref{eq:D}.
\end{proof}
For $n\in \N$ set
\begin{align*}
  u_n(x;q_i^2,q_i^{2a_{ij}})=\left(\frac{b_i}{2}\right)^n C_n(\frac{x}{b_i};q_i^2,q_i^{2a_{ij}})\in \cM_X^+[x].
\end{align*}
The polynomials $u_n(x;q_i^2,q_i^{2a_{ij}})$ satisfy the initial conditions and recursion given in C) in Section \eqref{sec:statement}. With this notation we get
\begin{align*}
  C&= -\frac{d_{ij}}{\lambda_{ij}} q_i^{-a_{ij}(a_{ij} +1)} (q_i^2; q_i^2)_{-a_{ij}}  \Du_{-a_{ij}-1} (F_i; q_i^2, q_i^{2 a_{ij}})^\ast,\\
    D&= \frac{\widetilde{d_{ij}}}{\lambda_{ij}}q_i^{a_{ij}(a_{ij} +1)} (q_i^{-2}; q_i^{-2})_{-a_{ij}} \Du_{-a_{ij}-1} (F_i; q_i^{-2}, q_i^{-2 a_{ij}})^\ast.
\end{align*}
Inserting the above expressions for $C$ and $D$ and Equation \eqref{eq:dij} into the formula in Theorem \ref{thm:caseII}, we obtain case (II) of Theorem \ref{thm:intro} under the isomorphism $\psi:\cB_\bc\rightarrow (\cA,\ast)$.
\begin{rema}
  We can use the formula from Proposition \ref{prop:Serre-bi-uni} to rewrite the term $A$ given by \eqref{eq:A} as
  \begin{align*}
 A = \sum_{n=0}^{1-a_{ij}}(-1)^n \begin{bmatrix} 1-a_{ij}\\ n \end{bmatrix}_{q_i}
   w_{1-a_{ij}-n}(F_i)^\ast \ast F_j \ast v_{n}(F_i)^\ast.
  \end{align*}
  However, following Remark \ref{rem:coeff-right}, this formula needs to be interpreted such that any coefficients $b_i^k\in \cM_X^+$ coming from $w_{1-a_{ij}-n}(F_i)^\ast$ are moved to the right hand side of the factor $F_j$.
\end{rema}
\begin{eg}
  The formulas in the above Theorem allow us to write down explicit expressions for the deformed quantum Serre relations in the case $\tau(i)=i\in I\setminus X$ and $j\in X$. We obtain
  \begin{align*}
    \sum_{n=0}^{1-a_{ij}} (-1)^n \begin{bmatrix}1-a_{ij} \\ n \end{bmatrix}_{q_i} &F_j \curvearrowright w_{1- a_{ij} -n, n}(F_i\stackrel{\ast}{,}F_i)\\
    &=\begin{cases}
     0 & \mbox{if $a_{ij}=0$}\\
     -\displaystyle\frac{q_i d_{ij} + q_i^{-1}\widetilde{d_{ij}}}{(q_i-q_i^{-1})(q_j-q_j^{-1})}& \mbox{if $a_{ij}=-1$}\\
      [2]_{q_i}\displaystyle\frac{q_id_{ij} - q_i^{-1}\widetilde{d_{ij}}}{q_j-q_j^{-1}} F_i & \mbox{if $a_{ij}=-2$}.
      \end{cases}
  \end{align*}  
Under the identification $\cB_\bc\cong (\cA,\ast)$, the above relations coincide with the relations given in \cite[Theorem 7.8]{a-Kolb14} in the reformulation given in \cite[Theorem 3.9]{a-BalaKolb15}. Note that $\cZ_i$ in the present paper coincides with $-q_ic_i\cZ_i$ in \cite{a-BalaKolb15}. For $a_{ij}=-3$ we obtain the new relation
  \begin{align*}
    \sum_{n=0}^{1-a_{ij}} &(-1)^n \begin{bmatrix}1-a_{ij} \\ n \end{bmatrix}_{q_i} F_j \curvearrowright w_{1- a_{ij} -n, n}(F_i\stackrel{\ast}{,}F_i)\\
    &=  - [2]_{q_i} \frac{q_i^3-q_i^{-3}}{q_j-q_j^{-1}} (d_{ij}+\widetilde{d_{ij}}) F_i^{\ast 2} -
    \frac{q_i^3-q_i^{-3}}{(q_i-q_i^{-1})^2(q_j-q_j^{-1})} (q_i^3d_{ij}+q_i^{-3}\widetilde{d_{ij}}) \cZ_i.
  \end{align*}
 In the special case where $a_{ji}=-1$ with $q_i=q$ and $q_j=q^3$, the above formula reproduces the formula given in \cite[(4.19)]{a-RV20}. 
\end{eg}  
\subsection{The quantum Serre relation for $\tau(i)=j$ where $i,j\in I\setminus X$}\label{sec:caseIII}
All through this subsection we assume that $i,j\in I\setminus X$ with $\tau(i)=j\neq i$. In this case Equation \eqref{eq:Fiastu} implies that
\begin{align}\label{eq:Fiastu-tij}
  F_i\ast u = F_i u - c_i\frac{q^{(\alpha_i,w_X(\alpha_j))}}{q_i-q_i^{-1}} K_{w_X(\alpha_j)-\alpha_i} \partial_{j,X}^L(u)
\end{align}
for all $u\in \cR_X$. Hence for any $n\in \N$ we have $F_i^{\ast n}=F_i^n$ and
\begin{align}
  F_j\ast F_i^n &\stackrel{\phantom{\eqref{eq:Fiastu}}}{=}F_j F_i^n - c_j\frac{q^{(\alpha_j,w_X(\alpha_i))}}{q_i-q_i^{-1}} K_{w_X(\alpha_i)-\alpha_j} \partial_{i,X}^L(F_i^n)\nonumber \\
  &\stackrel{\eqref{eq:pLiXFn}}{=} F_j F_i^n - c_j\frac{q^{(\alpha_i,\alpha_j)}}{q_i-q_i^{-1}} (n)_{q_i^2}K_i K_j^{-1} Z_i F_i^{n-1}\nonumber\\
  &\stackrel{\phantom{\eqref{eq:Fiastu}}}{=}F_j F_i^n -c_j\frac{q_i^{na_{ij}-2n+2}}{q_i-q_i^{-1}} (n)_{q_i^2}  F_i^{n-1} K_i K_j^{-1}Z_i. \label{eq:Fj-ast-Fin}
\end{align}
Moreover, one shows by induction on $m$, using \eqref{eq:Fiastu-tij}, \eqref{eq:partialLX} and \eqref{eq:partialLXFi}, that
\begin{align}\label{eq:Fim-ast-FjFin}
  F_i^{\ast m} \ast F_j F_i^n = F_i^m F_j F_i^n - c_i \frac{q_i^{2n-(n-1)a_{ij}}}{q_i-q_i^{-1}} (m)_{q_i^2}F_i^{m+n-1} K_j K_i^{-1} Z_j.
\end{align}
Inserting \eqref{eq:Fj-ast-Fin} into \eqref{eq:Fim-ast-FjFin} we obtain
\begin{align}
  F_i^m F_j F_i^n =  F_i^{\ast m}\ast F_j \ast F_i^{\ast n} &+ c_i \frac{q_i^{2n-(n-1)a_{ij}}}{q_i-q_i^{-1}} (m)_{q_i^2} F_i^{m+n-1} K_j K_i^{-1} Z_j \label{eq:FimFjFin-tij}\\
  &+ c_j\frac{q_i^{na_{ij}-2n+2}}{q_i-q_i^{-1}} (n)_{q_i^2}  F_i^{m+n-1} K_i K_j^{-1}Z_i. \nonumber
\end{align}
Using the relations
\begin{align*}
   \sum_{n=0}^\ell (-1)^n \begin{bmatrix}\ell \\ n \end{bmatrix}_{q} q^{n(\ell + 1)} =(q^2;q^2)_\ell,  \qquad \sum_{n=0}^\ell (-1)^n \begin{bmatrix}\ell \\ n \end{bmatrix}_{q} q^{n(\ell-1)}=0
\end{align*}  
which hold for all $\ell \in \N$, one shows that
\begin{align}
   &\sum_{n=0}^{1-a_{ij}} (-1)^n \begin{bmatrix}1-a_{ij} \\ n \end{bmatrix}_{q_i} q_i^{n(2-a_{ij})}(1-a_{ij}-n)_{q_i^2} =-\frac{q_i^{-1}(q_i^{2};q_i^{2})_{1-a_{ij}}}{q_i-q_i^{-1}},\label{eq:sum1}\\
  &\sum_{n=0}^{1-a_{ij}} (-1)^n \begin{bmatrix}1-a_{ij} \\ n \end{bmatrix}_{q_i} q_i^{n(a_{ij}-2)}(n)_{q_i^2} =-\frac{q_i^{-1}(q_i^{-2};q_i^{-2})_{1-a_{ij}}}{q_i-q_i^{-1}}.\label{eq:sum2}
\end{align}
Using Equation \eqref{eq:FimFjFin-tij} and the formulas \eqref{eq:sum1}, \eqref{eq:sum2}, we can now rewrite the quantum Serre relation $S_{ij}(F_i,F_j)=0$ in terms of the star product $\ast$ on $\cA$. One obtains the following result.
\begin{thm}
  Let $i,j\in I\setminus X$ with $\tau(i)=j\neq i$ and set $N=1-a_{ij}$. Then the relation
\begin{align*}
  \sum_{n=0}^{N}&(-1)^n\begin{bmatrix}N \\ n \end{bmatrix}_{q_i} F_i^{\ast (N-n)}\ast F_j \ast F_i^{\ast n}\\
  &=\frac{c_i q_i^{-N} (q_i^{2};q_i^{2})_{N}}{(q_i-q_i^{-1})^2} F_i^{\ast(N-1)} K_j K_i^{-1}Z_j
    + \frac{c_jq_i (q_i^{-2};q_i^{-2})_{N}}{(q_i-q_i^{-1})^2}F_i^{\ast(N-1)} K_i K_j^{-1} Z_i
\end{align*}
holds in the algebra $(\cA,\ast)$.
\end{thm}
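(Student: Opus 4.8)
The plan is to start from the ordinary quantum Serre relation $S_{ij}(F_i,F_j)=0$, which holds in $\cA$ since $\cA\subset U$ and \eqref{eq:qSerre} holds in $U$. Writing $N=1-a_{ij}$, this reads $\sum_{n=0}^{N}(-1)^n\begin{bmatrix}N\\ n\end{bmatrix}_{q_i}F_i^{N-n}F_jF_i^n=0$. I would then feed the transition formula \eqref{eq:FimFjFin-tij}, taken with $m=N-n$, into each monomial $F_i^{N-n}F_jF_i^n$ and collect the resulting three families of terms.

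After this substitution the leading pieces $F_i^{\ast(N-n)}\ast F_j\ast F_i^{\ast n}$ reassemble precisely into the left-hand side of the asserted identity. Since $\tau(i)=j\neq i$ forces $F_i^{\ast k}=F_i^k$ (see the remark following \eqref{eq:Fiastu-tij}), the two correction families in \eqref{eq:FimFjFin-tij} both carry the total power $F_i^{(N-n)+n-1}=F_i^{N-1}=F_i^{\ast(N-1)}$. I would therefore pull the common right factors $F_i^{\ast(N-1)}K_jK_i^{-1}Z_j$ and $F_i^{\ast(N-1)}K_iK_j^{-1}Z_i$ out of the respective sums, leaving two purely scalar $q_i$-sums to be evaluated.

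The key is to recast those scalar sums in the exact shape of \eqref{eq:sum1} and \eqref{eq:sum2}. For the $Z_j$-family the exponent rewrites as $2n-(n-1)a_{ij}=a_{ij}+n(2-a_{ij})$ and the integer factor is $(N-n)_{q_i^2}=(1-a_{ij}-n)_{q_i^2}$, so after factoring out $q_i^{a_{ij}}$ the remaining sum is exactly \eqref{eq:sum1}; for the $Z_i$-family one writes $na_{ij}-2n+2=2+n(a_{ij}-2)$ and factors out $q_i^2$, landing on \eqref{eq:sum2}. Substituting $a_{ij}-1=-N$ to turn $q_i^{a_{ij}-1}$ into $q_i^{-N}$, and multiplying through by the prefactors $c_iq_i^{a_{ij}}/(q_i-q_i^{-1})$ and $c_jq_i^2/(q_i-q_i^{-1})$, collapses the two correction terms into $-\,c_iq_i^{-N}(q_i^2;q_i^2)_N/(q_i-q_i^{-1})^2\,F_i^{\ast(N-1)}K_jK_i^{-1}Z_j$ and $-\,c_jq_i(q_i^{-2};q_i^{-2})_N/(q_i-q_i^{-1})^2\,F_i^{\ast(N-1)}K_iK_j^{-1}Z_i$. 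Moving these across $S_{ij}(F_i,F_j)=0$ yields the claimed identity.

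I expect the only real obstacle to be the exponent bookkeeping in the last step: one must factor the powers of $q_i$ so that the two sums fall precisely under \eqref{eq:sum1} and \eqref{eq:sum2}, and keep the substitution $a_{ij}=1-N$ consistent throughout so that $q_i^{a_{ij}-1}$ correctly becomes $q_i^{-N}$ and the $q$-Pochhammer symbols $(q_i^{\pm2};q_i^{\pm2})_N$ emerge with the right indices. Everything else is a direct assembly once \eqref{eq:FimFjFin-tij} is in hand.
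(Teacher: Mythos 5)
Your proposal is correct and follows exactly the paper's own route: the paper also derives the theorem by substituting the transition formula \eqref{eq:FimFjFin-tij} (with $m=N-n$) into the quantum Serre relation $S_{ij}(F_i,F_j)=0$ in $\cA$, noting $F_i^{\ast n}=F_i^n$, and evaluating the resulting scalar sums via \eqref{eq:sum1} and \eqref{eq:sum2}. Your exponent bookkeeping ($2n-(n-1)a_{ij}=a_{ij}+n(2-a_{ij})$, $na_{ij}-2n+2=2+n(a_{ij}-2)$, and $q_i^{a_{ij}-1}=q_i^{-N}$) is precisely the computation the paper leaves implicit in the phrase ``one obtains the following result,'' and it checks out.
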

With the relation $\cZ_i=c_iq_iZ_j$, the above Theorem turns into case (III) of Theorem \ref{thm:intro} under the isomorphism $\psi:\cB_\bc\rightarrow (\cA,\ast)$.
\providecommand{\bysame}{\leavevmode\hbox to3em{\hrulefill}\thinspace}
\providecommand{\MR}{\relax\ifhmode\unskip\space\fi MR }
\providecommand{\MRhref}[2]{%
  \href{http://www.ams.org/mathscinet-getitem?mr=#1}{#2}
}
\providecommand{\href}[2]{#2}

\end{document}